\newtheorem{theorem}{Theorem}[section]
\newtheorem{corollary}[theorem]{Corollary}
\newtheorem{lemma}[theorem]{Lemma}
\theoremstyle{definition}
\newtheorem{Def}[theorem]{Definition}
\newtheorem{rmk}[theorem]{Remark}
\def\bfe{\mathbf{e}}
\def\bfq{\mathbf{q}}
\def\bfx{\mathbf{x}}
\def\bfy{\mathbf{y}}
\def\bfk{\mathbf{k}}
\def\bfn{\mathbf{n}}
\def\bfp{\mathbf{p}}
\def\bbz{{\mathbb{Z}}}
\def\bbr{{\mathbb{R}}}
\def\bbz{{\mathbb{Z}}}
\def\lra{\longrightarrow}
\def\x{\times}
\def\id{\mathrm{id}}
\def\aut{\mathrm{Aut}}
\def\diag{\mathrm{diag}}
\def\R{\mathrm{(R)}}
\def\gfix{\mathrm{fix}}
\def\Sol{\mathrm{Sol}}
\def\tr{\mathrm{tr\,}}
\def\GL{\mathrm{GL}}
\def\bfa{\mathbf{a}}
\def\bfb{\mathbf{b}}
\def\coin{\mathrm{coin}}
\def\calR{\mathcal{R}}
\def\im{\mathrm{im}}
\def\GammaA{\Gamma_{\!A}}
\def\bfm{\mathbf{m}}
\def\bfr{\mathbf{r}}
\def\bfu{\mathbf{u}}
\def\bfM{\mathbf{M}}
\def\boxit#1{\vbox{\hrule\hbox{\vrule\kern3pt
     \vbox{\kern3pt#1\kern3pt}\kern3pt\vrule}\hrule}}
\begin{document}
\title[The $R_\infty$ property for crystallographic groups of $\Sol$]
{The $R_\infty$ property \\for crystallographic groups of $\Sol$}

\author{Ku Yong Ha}
\author{Jong Bum Lee}
\address{Department of Mathematics, Sogang University, Seoul 121-742, KOREA}
\email{kyha@sogang.ac.kr}
\email{jlee@sogang.ac.kr}

\thanks{The second-named author is partially supported by Basic Science Researcher Program through the National Research Foundation of Korea funded by the Ministry of Education (No. 2013R1A1A2058693) and by the Sogang University Research Grant of 2010(10022).}
\thanks{Tel: +82-2-705-8414; Fax: +82-2-714-6284 (J.B.Lee)}

\subjclass[2010]{55M20, 57S30}%
\keywords {Averaging formula, crystallographic groups of $\Sol$, $R_\infty$ property, Reidemeister number}

\abstract
There are $9$ kinds of crystallographic groups $\Pi$ of $\Sol$. For any automorphism $\varphi$ on $\Pi$, we study the Reidemeister number $R(\varphi)$. Using the averaging formula for the Reidemeister numbers, we prove that most of the crystallographic groups of $\Sol$ have the $R_\infty$ property.
\endabstract
\maketitle


\section{Introduction}

Let $G_1$ and $G_2$ be groups and $\varphi,\psi:G_1\to G_2$ be group homomorphisms. Then the coincidence group $\coin(\varphi,\psi)$ is define to be
$$
\coin(\varphi,\psi)=\{g \in G_1\mid \varphi(g)=\psi(g)\}.
$$
We also define an equivalence relation $\sim$ on $G_2$ by
$$
\alpha\sim\beta\ \Leftrightarrow\ \beta=\psi(\gamma)\alpha\varphi(\gamma)^{-1}\
\text{ for some $\gamma\in G_1$}.
$$
The equivalence classes are called \emph{Reidemeister coincidence classes} and $\calR[\varphi,\psi]$ denotes the set of Reidemeister coincidence classes.
The \emph{Reidemeister coincidence number} $R(\varphi,\psi)$ of $\varphi,\psi$ is defined to be the cardinality of $\calR[\varphi,\psi]$.

A special case of the Reidemeister coincidence number is the Reidemeister number.

\begin{Def}
Let $G$ be a group and $\varphi:G\to G$ be a group homomorphism. The \emph{Reidemeister number} $R(\varphi)$ of $\varphi$ is defined to be $R(\varphi)=R(\varphi,\id_G)$. We say that $G$ has the \emph{$R_\infty$ property} if $R(\varphi)=\infty$ for every automorphism $\varphi:G\to G$.
\end{Def}

Suppose we have a commutative diagram of groups:
\begin{align*}
\CD
1 @>>> \Gamma_1 @>{i_1}>> \Pi_1 @>{u_1}>> \Pi_1/\Gamma_1 @>>> 1\\
@. @V{\varphi'}V{\psi'}V @V{\varphi}V{\psi}V @V{\bar{\varphi}}V{\bar\psi}V \\
1 @>>> \Gamma_2 @>{i_2}>> {\Pi_2} @>{u_2}>> {\Pi_2}/\Gamma_2 @>>> 1
\endCD
\end{align*}
where the top and bottom sequences are exact and where the quotient groups $\Pi_1/\Gamma_1$ and ${\Pi_2}/\Gamma_2$ are finite.
For each $\bar\alpha\in{\Pi_2}/\Gamma_2$ and $\alpha\in u_2^{-1}(\bar\alpha)$, we have a commutative
diagram
\begin{align*}
\CD
1 @>>> \Gamma_1 @>{i_1}>> \Pi_1 @>{u_1}>> \Pi_1/\Gamma_1 @>>> 1\\
@. @V{\tau_\alpha\varphi'}V{\psi'}V @V{\tau_\alpha\varphi}V{\psi}V
@V{\tau_{\bar\alpha}\bar{\varphi}}V{\bar\psi}V \\
1 @>>> \Gamma_2 @>{i_2}>> {\Pi_2} @>{u_2}>> {\Pi_2}/\Gamma_2 @>>> 1
\endCD
\end{align*}
Here $\tau_\alpha$ is the homomorphism defined by conjugating $\alpha$. Moreover the following sequence of coincidence groups
$$
1\to \coin(\tau_\alpha\varphi', \psi')\stackrel{i_1}{\lra}
\coin(\tau_\alpha\varphi, \psi)\stackrel{u_1}{\lra}
\coin(\tau_{\bar\alpha}\bar\varphi, \bar\psi)
$$
is exact.
Remark that $i_2:\Gamma_2\to\Pi_2$ and $u_2:\Pi_2\to\Pi_2/\Gamma_2$ induce maps $\hat{i}_2^\alpha:\mathcal{R}[\tau_\alpha\varphi',\psi']\to\mathcal{R}[\tau_\alpha\varphi,\psi]$ and $\hat{u}_2^\alpha:\mathcal{R}[\tau_\alpha\varphi,\psi] \to\mathcal{R}[\tau_{\bar{\alpha}}\bar{\varphi},\bar{\psi}]$ such that $\hat{u}_2^\alpha$ is surjective and $(\hat{u}_2^\alpha)^{-1}([\bar{1}])=\mathrm{im}(\hat{i}_2^\alpha)$.
That is, the following sequence of sets is exact:
$$
\mathcal{R}[\tau_\alpha\varphi',\psi']\buildrel{\hat{i}_2^\alpha}\over\lra
\mathcal{R}[\tau_\alpha\varphi,\psi]\buildrel{\hat{u}_2^\alpha}\over\lra
\mathcal{R}[\tau_{\bar{\alpha}}\bar{\varphi},\bar{\psi}]\lra1.
$$

Analyzing the above exact sequences, we obtain the following averaging inequality for Reidemeister numbers:

\begin{theorem}[{\cite[Corollary~3.4, Theorem~3.5]{HLP2}}]\label{HLP}
Suppose we are given the above commutative diagram. Then:
\begin{enumerate}
\item[$(1)$] $R(\varphi,\psi)$ is finite if and only if $R(\tau_\alpha\varphi',\psi')$ is finite for every $\alpha\in\Pi_2$.
\item[$(2)$] We have
$$
R(\varphi,\psi)\ge\frac{1}{[\Pi_1:\Gamma_1]}\sum_{\bar{\alpha}\in\Pi_2/\Gamma_2}R(\tau_\alpha\varphi',\psi').
$$
When either side of the inequality is finite, then equality occurs if and only if $\coin(\tau_{\alpha}\varphi,\psi)\subset\Gamma_1$ for each $\alpha\in\Pi_2$.
\end{enumerate}
\end{theorem}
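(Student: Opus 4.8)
The plan is to run the exact sequence
$$
\calR[\tau_\alpha\varphi',\psi']\xrightarrow{\hat{i}_2^\alpha}\calR[\tau_\alpha\varphi,\psi]\xrightarrow{\hat{u}_2^\alpha}\calR[\tau_{\bar\alpha}\bar\varphi,\bar\psi]\to1
$$
for all $\alpha$ at once, bookkept by a twisted action. The group $\Pi_1/\Gamma_1$ acts on the finite set $\Pi_2/\Gamma_2$ by $\bar x\cdot\bar\beta=\bar\psi(\bar x)\bar\beta\bar\varphi(\bar x)^{-1}$, and its orbit set is exactly $\calR[\bar\varphi,\bar\psi]$. Fix an orbit $O$, a representative $\bar\alpha_O$ and a lift $\alpha_O\in\Pi_2$. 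Right translation $[\delta]\mapsto[\delta\alpha_O]$ is a bijection $\calR[\tau_{\alpha_O}\varphi,\psi]\xrightarrow{\sim}\calR[\varphi,\psi]$ intertwining $\hat{u}_2^{\alpha_O}$ with $\hat{u}_2^{1}$ (and likewise downstairs), so it carries $\im(\hat{i}_2^{\alpha_O})=(\hat{u}_2^{\alpha_O})^{-1}([\bar1])$ bijectively onto the fibre of $\hat{u}_2^{1}$ over the class $O$. Since $\hat{u}_2^{1}$ is onto, these fibres partition $\calR[\varphi,\psi]$, giving
$$
R(\varphi,\psi)=\sum_{O}\bigl|\im(\hat{i}_2^{\alpha_O})\bigr|.
$$

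Now I would prove two facts about the summands. \emph{Invariance.} For any lift $\alpha$ of any coset lying in $O$ one has $R(\tau_\alpha\varphi',\psi')=R(\tau_{\alpha_O}\varphi',\psi')$; in particular this number depends only on the coset $\bar\alpha\in\Pi_2/\Gamma_2$, so the right side of (2) makes sense. Independence of the lift is the change of variable $[y]\mapsto[y\gamma]$ ($\gamma\in\Gamma_2$); constancy on the orbit follows from the general fact that simultaneously conjugating a pair of homomorphisms preserves its Reidemeister set, applied through the relations $\psi'\circ\tau_x=\tau_{\psi(x)}\circ\psi'$ and $\varphi'\circ\tau_x=\tau_{\varphi(x)}\circ\varphi'$ on $\Gamma_1$ (valid because $\Gamma_i\trianglelefteq\Pi_i$). \emph{Fibre bound.} Two classes of $\calR[\tau_{\alpha_O}\varphi',\psi']$ have equal image under $\hat{i}_2^{\alpha_O}$ precisely when their $\Gamma_2$-representatives are $\tau_{\alpha_O}\varphi,\psi$-twisted conjugate by some $x\in\Pi_1$; since both representatives lie in $\Gamma_2$, such $x$ must have $\bar x\in\mathrm{Stab}(\bar\alpha_O)$. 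Hence $\hat{i}_2^{\alpha_O}$ identifies exactly the $\Gamma_1$-twisted classes sitting in one orbit of $H:=u_1^{-1}(\mathrm{Stab}(\bar\alpha_O))$, and since $\Gamma_1\trianglelefteq H$ with $H/\Gamma_1\cong\mathrm{Stab}(\bar\alpha_O)$, each $H$-orbit contains at most $|\mathrm{Stab}(\bar\alpha_O)|$ of them. Thus each fibre of $\hat{i}_2^{\alpha_O}$ has size $\le|\mathrm{Stab}(\bar\alpha_O)|=[\Pi_1:\Gamma_1]/|O|$, so $R(\tau_{\alpha_O}\varphi',\psi')\le\tfrac{[\Pi_1:\Gamma_1]}{|O|}\,\bigl|\im(\hat{i}_2^{\alpha_O})\bigr|$.

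These combine at once. For (2), using that $O$ contains $|O|$ cosets all giving the same value,
$$
\frac1{[\Pi_1:\Gamma_1]}\sum_{\bar\alpha\in\Pi_2/\Gamma_2}R(\tau_\alpha\varphi',\psi')=\sum_{O}\frac{|O|}{[\Pi_1:\Gamma_1]}R(\tau_{\alpha_O}\varphi',\psi')\le\sum_{O}\bigl|\im(\hat{i}_2^{\alpha_O})\bigr|=R(\varphi,\psi).
$$
For (1): $\Pi_2/\Gamma_2$ is finite, so there are finitely many orbits; if every $R(\tau_\alpha\varphi',\psi')$ is finite the display forces $R(\varphi,\psi)<\infty$, and if $R(\varphi,\psi)<\infty$ then each $|\im(\hat{i}_2^{\alpha_O})|<\infty$ and the fibre bound forces every $R(\tau_\alpha\varphi',\psi')<\infty$.

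Finally, equality in (2) holds if and only if every fibre of every $\hat{i}_2^{\alpha}$ attains the maximal size $|\mathrm{Stab}(\bar\alpha)|$, and the step I expect to be the real obstacle is identifying \emph{this} with the stated condition $\coin(\tau_\alpha\varphi,\psi)\subseteq\Gamma_1$ for all $\alpha$: one analyzes the $H$-orbit of the trivial element, whose decomposition into $\Gamma_1$-suborbits is controlled by the position of $\coin(\tau_\alpha\varphi,\psi)$ relative to $\Gamma_1$ (and $H$ and $\Pi_1$). The rest is soft—chasing the exact sequence and tracking the translation bijections—so the equality criterion, with its orbit counting of coincidence subgroups, is where the work concentrates.
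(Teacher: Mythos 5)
The paper itself does not prove this theorem; it is quoted from [HLP2], so your argument can only be compared with the standard averaging argument given there, which it largely parallels. Your orbit decomposition of $\calR[\varphi,\psi]$ over $\calR[\bar\varphi,\bar\psi]$ via the translation bijections, the well-definedness/constancy of $R(\tau_\alpha\varphi',\psi')$ on lifts and on orbits, and the fibre bound $R(\tau_{\alpha_O}\varphi',\psi')\le|\mathrm{Stab}(\bar\alpha_O)|\cdot|\im(\hat{i}_2^{\alpha_O})|$ are all correct (the last one because $\Gamma_1\trianglelefteq\Pi_1$ makes each translate $\psi(x_0)\,[\Gamma_1\text{-orbit}]\,(\tau_{\alpha_O}\varphi)(x_0)^{-1}$ again a single $\Gamma_1$-orbit), and they do yield the inequality in (2). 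One small slip in (1): the final displayed inequality points the wrong way to give ``all $R(\tau_\alpha\varphi',\psi')<\infty \Rightarrow R(\varphi,\psi)<\infty$''; what gives it is your earlier identity $R(\varphi,\psi)=\sum_O|\im(\hat{i}_2^{\alpha_O})|$ together with the trivial bound $|\im(\hat{i}_2^{\alpha_O})|\le R(\tau_{\alpha_O}\varphi',\psi')$.

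The genuine gap is the equality criterion, which is part of statement (2) and which you explicitly leave as ``the real obstacle.'' Your reduction is right -- equality holds iff every fibre of every $\hat{i}_2^{\alpha_O}$ has the maximal size $|\mathrm{Stab}(\bar\alpha_O)|$ -- but the hint you give (analyze the $H$-orbit of the trivial element) is not enough and is aimed at the wrong object: that would only control the fibre over $[1]$, hence only the condition $\coin(\tau_{\alpha_O}\varphi,\psi)\subset\Gamma_1$ for the chosen representatives, whereas the theorem requires the condition for every $\alpha\in\Pi_2$. The missing lemma is: for $\gamma\in\Gamma_2$, the $H$-stabilizer of $\gamma$ under $x\cdot\gamma=\psi(x)\gamma(\tau_{\alpha_O}\varphi)(x)^{-1}$ is exactly $\coin(\tau_{\gamma\alpha_O}\varphi,\psi)$ (which lies in $H$ automatically), so the fibre of $\hat{i}_2^{\alpha_O}$ over $[\gamma]$ is in bijection with $\Gamma_1\backslash H/\coin(\tau_{\gamma\alpha_O}\varphi,\psi)$; since $\Gamma_1\trianglelefteq H$ this double-coset count equals $[H:\Gamma_1]$ iff $\coin(\tau_{\gamma\alpha_O}\varphi,\psi)\subset\Gamma_1$. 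Running over all $\gamma\in\Gamma_2$ and all orbits $O$ gives the condition on the transversal $\Gamma_2\alpha_O$, and one still has to transport it to arbitrary $\alpha\in\Pi_2$ via $\coin(\tau_{\psi(x)\alpha\varphi(x)^{-1}}\varphi,\psi)=x\,\coin(\tau_{\alpha}\varphi,\psi)\,x^{-1}$ and the normality of $\Gamma_1$ in $\Pi_1$. None of this is in your proposal, so as written it proves the inequality and part (1) but not the full statement.
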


We have shown in \cite[Theorem~4.2]{HLP2} that the above averaging inequality becomes identity when $\Pi_i$ are orientable Bieberbach groups of simply connected nilpotent Lie groups of equal dimension. We generalize this result to Bieberbach groups of simply connected solvable Lie groups of equal dimension.

\begin{corollary}\label{Reidemeister-Bieb}
Suppose in the above commutative diagram that $\Pi_i$ are {torsion free} extensions of polycyclic groups $\Gamma_i$ by finite groups $\Pi_i/\Gamma_i$. If $\Gamma_1$ and $\Gamma_2$ have the same {Hirsch length}, then \begin{align*}
R(\varphi,\psi)&=\frac{1}{[\Pi_1:\Gamma_1]}\sum_{\bar\alpha\in\Pi_2/\Gamma_2}R(\tau_\alpha\varphi',\psi').
\end{align*}
\end{corollary}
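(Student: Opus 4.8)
The plan is to deduce this corollary from Theorem~\ref{HLP} by verifying the equality condition in part~(2), namely that $\coin(\tau_\alpha\varphi,\psi)\subset\Gamma_1$ for every $\alpha\in\Pi_2$. The key observation is that the finiteness hypotheses needed to make the averaging formula an honest identity are automatic here: since $\Gamma_1$ and $\Gamma_2$ are polycyclic of equal Hirsch length and the $\Pi_i$ are torsion-free extensions by finite groups, the groups $\Pi_i$ are polycyclic of equal Hirsch length as well, and a homomorphism $\tau_\alpha\varphi:\Pi_1\to\Pi_2$ between such groups has finite Reidemeister number precisely when it is injective with image of finite index --- in any case, the argument should be arranged so that if the right-hand side is infinite we are done trivially, and if it is finite we must show $\coin(\tau_\alpha\varphi,\psi)\subset\Gamma_1$.

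First I would reduce to a statement about a single coincidence subgroup: fix $\alpha\in\Pi_2$ and set $\phi=\tau_\alpha\varphi$. By the exact sequence of coincidence groups already displayed in the excerpt, $\coin(\phi,\psi)\cap\Gamma_1=\coin(\tau_\alpha\varphi',\psi')$, and $u_1$ maps $\coin(\phi,\psi)$ into $\coin(\tau_{\bar\alpha}\bar\varphi,\bar\psi)\subset\Pi_1/\Gamma_1$, a \emph{finite} group. So $\coin(\phi,\psi)\subset\Gamma_1$ is equivalent to the assertion that this induced map $u_1:\coin(\phi,\psi)\to\Pi_1/\Gamma_1$ has trivial image, i.e. that $\coin(\phi,\psi)=\coin(\tau_\alpha\varphi',\psi')$. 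Equivalently, no element of $\Pi_1\setminus\Gamma_1$ lies in $\coin(\phi,\psi)$.

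The heart of the matter is therefore: if $g\in\Pi_1$ satisfies $\phi(g)=\psi(g)$ and $g\notin\Gamma_1$, derive a contradiction with finiteness of the Reidemeister numbers. Here is where the equal-Hirsch-length and torsion-freeness hypotheses must be used. The idea I would pursue: such a coincidence element $g$ generates, together with $\Gamma_1$, a subgroup containing $\Gamma_1$ with strictly larger ``effective dimension'' available for twisting, and one exhibits infinitely many Reidemeister classes --- concretely, powers $g^n$ (or products $g^n\gamma$) would be pairwise non-$\sim$-equivalent because any equivalence $g^m=\psi(\gamma)g^n\phi(\gamma)^{-1}$ forces, after projecting to $\Pi_1/\Gamma_1$ and using that $\bar g$ has finite order while the relevant twisted conjugacy on $\Gamma_2$ is constrained by the rank count, a relation incompatible with $m\ne n$. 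More robustly, I would invoke the finiteness criterion (essentially Theorem~\ref{HLP}(1) combined with standard facts about Reidemeister numbers of polycyclic groups): $R(\phi,\psi)<\infty$ forces $\phi$ and $\psi$ to ``differ nondegenerately'' on the polycyclic piece, and a coincidence element outside $\Gamma_1$ would make the coincidence subgroup of $\Gamma_1$ too large, contradicting $R(\tau_\alpha\varphi',\psi')<\infty$ via the equal-Hirsch-length hypothesis (this is exactly the point where dimension equality was used in \cite[Theorem~4.2]{HLP2} for the nilpotent case, and the polycyclic generalization replaces ``dimension of the Lie group'' by ``Hirsch length'').

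The main obstacle I anticipate is precisely this last step: in the nilpotent Lie group setting one has a faithful geometric/linear model (the affine action on $\mathbb{R}^n$, Lefschetz--Nielsen fixed point theory, and the fact that a coincidence element acts as an affine map whose fixed-point set, if positive-dimensional, forces the twisted Lefschetz number to vanish identically) which gives a clean contradiction; for general polycyclic $\Gamma_i$ there is no ambient Lie group of the same dimension unless one passes to the Mal'cev-type completion or works on the polycyclic group directly, so one has to replace the geometric argument by an algebraic one about Hirsch length of coincidence subgroups, using that $\Gamma_2$ has finite Hirsch length equal to that of $\Gamma_1$ and that $\coin(\tau_\alpha\varphi',\psi')$ of infinite index (as it would be if enlarged by $g$) forces $R(\tau_\alpha\varphi',\psi')=\infty$. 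Making that implication precise --- essentially, that for polycyclic groups a twisted-coincidence subgroup of the full Hirsch length already yields infinitely many Reidemeister classes --- is the technical core I would need to nail down, likely by induction on the Hirsch length along a subnormal series with infinite cyclic quotients.
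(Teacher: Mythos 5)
Your overall reduction coincides with the paper's: dispose of the infinite case via Theorem~\ref{HLP}.(1), and in the finite case verify the equality criterion of Theorem~\ref{HLP}.(2) by showing $\coin(\tau_\alpha\varphi,\psi)\subset\Gamma_1$ for every $\alpha\in\Pi_2$. The problem is that the step you yourself flag as ``the technical core I would need to nail down'' is exactly the point the corollary hinges on, and your proposal neither proves it nor cites it. The paper obtains it from \cite[Theorem~3.2]{Wong}: for homomorphisms between polycyclic groups of the \emph{same} Hirsch length, finiteness of the Reidemeister coincidence number forces the coincidence group to be trivial. With that input the finish is short and cleaner than the mechanisms you sketch: $R(\varphi,\psi)<\infty$ gives $R(\tau_\alpha\varphi',\psi')<\infty$ for all $\alpha$ (Theorem~\ref{HLP}.(1)), hence $\coin(\tau_\alpha\varphi',\psi')=1$; since $\coin(\tau_{\bar\alpha}\bar\varphi,\bar\psi)$ is a subgroup of the finite group $\Pi_1/\Gamma_1$, the exact sequence of coincidence groups shows $\coin(\tau_\alpha\varphi,\psi)$ is finite, and torsion-freeness of $\Pi_1$ then makes it trivial, so in particular it is contained in $\Gamma_1$, and Theorem~\ref{HLP}.(2) applies.

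Beyond the missing citation, your substitute arguments are not quite on target. A coincidence element $g\in\coin(\tau_\alpha\varphi,\psi)$ with $g\notin\Gamma_1$ does not produce a coincidence subgroup of $\Gamma_1$ ``of full Hirsch length''; all it gives, using finiteness of $\Pi_1/\Gamma_1$ and torsion-freeness of $\Pi_1$, is a nontrivial element $g^n\in\coin(\tau_\alpha\varphi',\psi')$, and one still needs the Wong-type implication that a \emph{nontrivial} coincidence group is incompatible with $R(\tau_\alpha\varphi',\psi')<\infty$ under the equal-Hirsch-length hypothesis. That implication is the substance of Wong's theorem; establishing it from scratch by induction along a subnormal series with infinite cyclic quotients is a genuine piece of work, not a routine verification, and your sketch of ``infinitely many classes from powers $g^n$'' presupposes it rather than proves it. So as written the proposal has a real gap at its central step, even though the surrounding reduction (exact sequence of coincidence groups, finiteness of the quotient, torsion-freeness) matches the paper's argument.
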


\begin{proof}
By Theorem~\ref{HLP}.(1), we may assume $R(\varphi,\psi)<\infty$. Then $R(\tau_\alpha\varphi',\psi')$ is finite for every $\alpha\in\Pi_2$. According to \cite[Theorem~3.2]{Wong}, $\coin(\tau_\alpha\varphi',\psi')$ is a trivial group. Since $\Pi_1/\Gamma_1$ is a finite group, the subgroup $\coin(\tau_{\bar\alpha}\bar\varphi,\bar\psi)$ of $\Pi_1/\Gamma_1$ is a finite group. From the above exact sequence, $\coin(\tau_\alpha\varphi,\psi)$ is a finite group in the torsion free group $\Pi_1$ and hence it is a trivial group. Now the result follows from Theorem~\ref{HLP}.(2).
\end{proof}

Our aim is to understand the Reidemeister numbers of automorphism on crystallographic groups of simply connected solvable Lie groups. For the $R_\infty$ property of low-dimensional crystallographic groups modeled on simply connected nilpotent Lie groups, we refer to \cite{DP}.
In this paper, we will consider the $3$-dimensional simply connected solvable Lie group $\Sol$, and we shall study the $R_\infty$ property of crystallographic groups of $\Sol$ using Theorem~\ref{HLP} and Corollary~\ref{Reidemeister-Bieb}. The work of discovering which groups have the $R_\infty$ property was begun by Fel'shtyn and Hill in \cite{FH}.
\bigskip

\section{The crystallographic groups of $\Sol$}

One can describe $\Sol$ as a semi-direct product $\bbr^2\rtimes_\sigma\bbr$ where
$t\in\bbr$ acts on $\bbr^2$ via the matrix
$$
\sigma(t)=\left[\begin{matrix}e^t&0\\0&e^{-t}\end{matrix}\right].
$$
The group of affine automorphisms of $\Sol$ is $\mathrm{Aff}(\Sol)=\Sol\rtimes\aut(\Sol)$. Let $K$ be a maximal compact subgroup of $\aut(\Sol)$. A discrete cocompact subgroup of $\Sol\rtimes K\subset \mathrm{Aff}(\Sol)$ is called a \emph{crystallographic group} modeled on $\Sol$, simply an SC-\emph{group} of $\Sol$. A torsion free SC-group is called a \emph{Bieberbach group} or an SB-\emph{group} of $\Sol$.

Let $\Pi$ be an SC-group of $\Sol$. Let $\Gamma=\Pi\cap\Sol$ and $\Phi=\Pi/\Gamma$. Then $\Gamma$ is a lattice (i.e., a discrete cocompact subgroup) of $\Sol$ and $\Phi$ is a finite group, called the \emph{holonomy group} of $\Pi$.

Now we recall from \cite{LZ} that a lattice of $\Sol$ is determined by a $2\x2$ hyperbolic integer matrix $A$ of determinant $1$ and trace $>2$. Let $\Gamma$ be a lattice of $\Sol$. Then $\bbr^2\cap\Gamma$ is a lattice of $\bbr^2$ and $\Gamma/\bbr^2\cap\Gamma$ is a lattice of $\Sol/\bbr^2=\bbr$, so that $\bbr^2\cap\Gamma\cong\bbz^2$ and
$\Gamma/\bbr^2\cap\Gamma\cong\bbz$, and the following diagram of short exact sequences is commutative
$$
\CD 1@>>>\bbr^2@>>>\Sol@>>>\bbr@>>>1\\
@.@AAA@AAA@AAA\\
1@>>>\bbz^2@>>>\Gamma@>>>\bbz@>>>1
\endCD
$$
Choose a basis $\{\bfx_1,\bfx_2\}$ for $\bbz^2$ and a basis $t_0$
for $\bbz$. Then
\begin{align*}
\sigma(t_0)(\bfx_i)= \ell_{1i}\bfx_1+\ell_{2i}\bfx_2,\ (i=1,2)
\end{align*}
for some integers $\ell_{ij}$. Thus the lattice $\Gamma$ is a subgroup of $\Sol$ generated by $\bfx_1,\bfx_2$ and $t_0$ satisfying the above identity.
Let $P=\big[\bfx_1\ \bfx_2\big]$ be the matrix with columns $\bfx_1$ and $\bfx_2$, and let
$$
A=\left[\begin{matrix} \ell_{11}&\ell_{12}\\\ell_{21}&\ell_{22}
\end{matrix}\right].
$$
Then
$$
PAP^{-1}=\sigma(t_0)=\left[\begin{matrix}e^{t_0}&0\\0&e^{-t_0}\end{matrix}\right]
$$
and so $A\in \mathrm{SL}(2,\bbz)$. Note that $P^{-1}$ consists of eigenvectors of $A$ with eigenvalues $e^{t_0}$ and $e^{-t_0}$. Notice also that $A$ has trace $e^{t_0}+e^{-t_0}=\ell_{11}+\ell_{22}>2$. This implies that $A$ is a hyperbolic matrix; it has different real eigenvalues: one is greater than $1$ and the other is less than $1$. Furthermore, neither $\ell_{12}$ nor $\ell_{21}$ vanishes, see for example \cite{LZ}. We shall denote such a lattice by $\GammaA$. Then
\begin{align*}
\GammaA&=\langle{a_1,a_2,t\ |\ [a_1,a_2]=1, ta_it^{-1}=a_1^{\ell_{1i}}a_2^{\ell_{2i}}}\rangle\\
&=\langle{a_1,a_2,t\ |\ [a_1,a_2]=1, ta_it^{-1}=A(a_i)}\rangle.
\end{align*}
For an element of the form $a_1^xa_2^y$, we shall use the notation $\bfa^\bfx$.

It is known from \cite[Theorem~8.2]{HL2} that there are $9$ kinds of SC-groups of $\Sol$:
{$\GammaA$, $\Pi_1(\bfk)$, $\Pi_2^\pm$, $\Pi_3(\bfk,\bfk')$, $\Pi_4(\bfk)$, $\Pi_5(\bfm,\bfk,\bfk',\bfn)$, $\Pi_6(\bfk,\bfk')$, $\Pi_7(\bfk)$ and $\Pi_8(\bfk,\bfm)$}.
There are $4$ kinds of SB-groups of $\Sol$.
We recall from \cite[Corollary~8.3]{HL2} that $\GammaA$ and $\Pi_2^\pm$ are SB-groups, and the SC-groups $\Pi_1(\bfk)$, $\Pi_4(\bfk)$, $\Pi_5(\bfm,\bfk,\bfk',\bfn)$, $\Pi_7(\bfk)$ and $\Pi_8(\bfk,\bfm)$ are not SB-groups. The SC-groups $\Pi_3(\bfk,\bfk')$ and $\Pi_6(\bfk,\bfk')$ become SB-groups for a particular choice of $\bfk$ and $\bfk'$. In fact, we may assume
$$
M=\left[\begin{matrix}-1&m\\\hspace{8pt}0&1\end{matrix}\right]
$$
where $m=0$ or $1$. If $m=0$, then $\ell_{11}=\ell_{22}$ and $\ker(I-M)/\im(I+M)\cong\bbz_2$ is generated by $\bfe_2=(0,1)^t$. If $m=1$, then $\ell_{11}-\ell_{22}=\ell_{21}$ and $\ker(I-M)/\im(I+M)$ is a trivial group and hence $\bfk={\bf0}$. It is shown in \cite[Corollary~8.3]{HL3} that they are SB-groups if and only if $m=0$, $\bfk=\bfe_2$ and $\bfk'-\bfk\ne{\bf0}$. Thus they are not SB-groups if and only if
\begin{enumerate}
\item $m=1$,
\item $m=0$ and $\bfk={\bf0}$, or
\item $m=0$ and $\bfk=\bfk'=\bfe_2$.
\end{enumerate}

\begin{rmk}\label{FI}
Let $\varphi:\Pi\to\Pi$ be an automorphism on an SC-group $\Pi$. By \cite[Lemma~2.1]{LL2}, there is a fully invariant subgroup $\Lambda\subset\GammaA=\Pi\cap\Sol$ of $\Pi$, which is of finite index. Since $\GammaA$ is generated by $a_1,a_2,t$, it follows that $\Lambda$ is generated by some elements $b_1=\bfa^{\bfm_1}, b_2=\bfa^{\bfm_2}, s=\bfa^* t^k$. Furthermore, the subgroup $\langle b_1, b_2 \rangle$ is a fully invariant subgroup of the lattice $\Lambda$ (see for example \cite[Theorem~2.3]{LZ}).
\end{rmk}

Let $\varphi$ be a homomorphism on $\Pi$, and let $\Lambda=\langle b_1,b_2,s\rangle\subset\Pi$ be a lattice of $\Sol$ such that $\varphi(\Lambda)\subset\Lambda$. Denote by $\varphi'$ the homomorphism obtained by restricting $\varphi$ on $\Lambda$. Then by \cite[Theorem~2.4]{LZ}, $\varphi(b_i)=\bfb^{\bfn_i}$ and $\varphi(s)=\bfb^\bfp s^m$ for some $\bfn_i,\bfp\in\bbz^2$ and $m\in\bbz$. We say that $\varphi$ or $\varphi'$ is of type (I) if $m=1$; of type (II) if $m=-1$; of type (III) if $m\ne\pm1$. When $\varphi$ is of type (III), we have $\varphi(b_i)=1$.

In the following sections we will show that the SC-groups $\Pi_2^-$, $\Pi_3(\bfk,\bfk')$, $\Pi_4(\bfk)$, $\Pi_5(\bfm,\bfk,\bfk',\bfn)$, $\Pi_6(\bfk,\bfk')$, $\Pi_7(\bfk)$ and $\Pi_8(\bfk,\bfm)$ have the $R_\infty$ property using Theorem~\ref{HLP} and Theorem~\ref{Pi_1-R}. In order to apply Theorem~\ref{HLP}, we need to find a characteristic subgroup $\subset\GammaA$ of each SC-group. It turns out that $\GammaA$ itself is a characteristic subgroup, not a fully invariant subgroup, of all the SC-groups except $\Pi_5(\bfm,\bfk,\bfk',\bfn)$.

On the other hands, the groups $\GammaA$, $\Pi_1(\bfk)$ and $\Pi_2^+$ have finite Reidemeister numbers of automorphisms $\varphi$ only when $\varphi$ is of type (II) with $\det\varphi'=-1$.
When the automorphism $\varphi$ is of type (II), it is obvious that $\varphi^2$ is of type (I). Therefore, $R(\varphi^2)=\infty$. In particular, the Reidemeister zeta function \cite{Fel00}
$$
R_\varphi(z)=\exp\left(\sum_{n=1}^\infty\frac{R(\varphi^n)}{n}z^n\right)
$$
is not defined for any automorphism $\varphi$ of an SC-group of $\Sol$. In fact, it is shown in \cite{FL} that if the Reidemeister zeta function is defined for an automorphism on an infra-solvmanifold of type $\R$, then the manifold is an infra-nilmanifold.
\bigskip

\section{The SB-groups}

In this section, we will study the Reidemeister numbers of automorphisms on SB-groups of $\Sol$ (see also Section~4 of \cite{GW}, in which a different method is used).

\begin{theorem}\label{Pi_1-R}
For any automorphism $\varphi$ on $\GammaA$, we have
$$
R(\varphi)=\begin{cases}
4&\text{when $\varphi$ is of type $(\mathrm{II})$ and $\det\varphi=-1;$}\\
\infty&\text{otherwise.}
\end{cases}
$$
\end{theorem}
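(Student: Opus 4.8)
The plan is to compute $R(\varphi)$ directly from the presentation of $\GammaA = \langle a_1, a_2, t \mid [a_1,a_2]=1,\ t a_i t^{-1} = A(a_i)\rangle$, exploiting the semidirect-product structure $\GammaA = \bbz^2 \rtimes_A \bbz$. An automorphism $\varphi$ of $\GammaA$ must preserve the maximal normal abelian subgroup $\bbz^2 = \langle a_1, a_2\rangle$ (it is characteristic, being the Fitting subgroup), so $\varphi$ descends to an automorphism $\bar\varphi$ of $\bbz^2 \cong \GammaA/\bbz^2 \cdots$ wait — more precisely $\varphi$ restricts to $\varphi' = \varphi|_{\bbz^2} \in \GL(2,\bbz)$ and induces $\bar\varphi = \pm 1$ on $\GammaA/\bbz^2 \cong \bbz$. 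Using the terminology of the excerpt, type (I) means $\bar\varphi = 1$, type (II) means $\bar\varphi = -1$, and type (III) ($m \neq \pm 1$) cannot occur for an \emph{automorphism} since $\bar\varphi$ must be invertible on $\bbz$. So only types (I) and (II) arise. Writing $\varphi(a_i) = \bfa^{\bfn_i}$ and $\varphi(t) = \bfa^{\bfp} t^{\pm 1}$, the relation $t a_i t^{-1} = A(a_i)$ forces a compatibility condition: applying $\varphi$ gives $\varphi(t)\varphi(a_i)\varphi(t)^{-1} = A(\varphi(a_i))$, i.e. $A^{\pm 1} N = N A$ where $N = \varphi'$, so $N$ commutes with $A$ in type (I) and $N A N^{-1} = A^{-1}$ in type (II).

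Next I would compute the Reidemeister classes. Every element of $\GammaA$ is uniquely $\bfa^{\bfx} t^k$. Twisted conjugacy by $\gamma = \bfa^{\bfy} t^j$ sends $\bfa^{\bfx}t^k \mapsto \gamma\,(\bfa^{\bfx}t^k)\,\varphi(\gamma)^{-1}$. I would stratify $\calR[\varphi,\id]$ by the image in $\GammaA/\bbz^2 \cong \bbz$: the $\bbz$-coordinate transforms as $k \mapsto k + j - (\pm 1)j$, which is $k$ in type (I) and $k + 2j$ in type (II). \textbf{Type (I):} the $\bbz$-coordinate is a complete invariant modulo nothing, so there are infinitely many classes (one sees already that $t^k$ and $t^{k'}$ are never twisted-conjugate for $k \neq k'$), giving $R(\varphi) = \infty$. \textbf{Type (II):} the $\bbz$-coordinate collapses modulo $2$, leaving two "levels" $k$ even and $k$ odd. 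On the level $k = 0$ (the $\bbz^2$-part), twisted conjugation by $\bfa^{\bfy}$ acts on $\bfa^{\bfx}$ by $\bfx \mapsto \bfx + \bfy - N\bfy = \bfx + (I - N)\bfy$, while conjugation by $t^j$ further acts by $A^j$ and shifts between levels; the upshot is that the classes at even level are indexed by $\bbz^2 / \langle \operatorname{im}(I - N),\ (A - N)\bbz^2 \cdots\rangle$, where the second piece comes from conjugating by $t$ and using $A N A^{-1} = N^{-1}$-type relations. The finiteness of $R(\varphi)$ is then equivalent to $\det(I - N) \neq 0$ together with the index being finite; a short computation using $N A N^{-1} = A^{-1}$ and $\det A = 1$ shows $\det(I - N) = 0$ unless $\det N = -1$, and when $\det N = -1$ one gets $|\det(I - N)| = |{-\det N}|\cdot|\det(N^{-1} - I)| $-type identities pinning the relevant index to exactly $2$ on each level, hence $R(\varphi) = 2 + 2 = 4$.

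So the key steps are: (i) show $\bbz^2$ is characteristic and deduce only types (I), (II) occur, with the commutation/anticommutation constraint on $N = \varphi'$ relative to $A$; (ii) dispatch type (I) by exhibiting infinitely many classes via the $\bbz$-coordinate; (iii) in type (II), reduce $\calR[\varphi,\id]$ to two "levels" and identify each level's class set with a quotient of $\bbz^2$; (iv) use $N A N^{-1} = A^{-1}$ and $\det A = 1$ to show this quotient is infinite when $\det N = 1$ (so $R(\varphi) = \infty$) and has exactly $2$ elements when $\det N = -1$, yielding $R(\varphi) = 4$. The main obstacle I expect is step (iv): correctly computing the class set at each level requires carefully combining the two sources of twisted conjugacy (by $\bfa^{\bfy}$ and by $t^j$, the latter mixing levels and introducing the $A^j$-action), and then extracting from the anticommutation relation the clean dichotomy on $\det N$ and the precise count $2$. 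An alternative, cleaner route for the finiteness half would be to invoke the eigenvalue criterion — $R(\varphi) < \infty$ iff $1$ is not an eigenvalue of the rationalized linearization of $\varphi$ — but the explicit count $R(\varphi) = 4$ still needs the direct orbit computation above.
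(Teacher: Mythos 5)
Your overall strategy is the same as the paper's (stratify Reidemeister classes by the image in $\GammaA/\bbz^2\cong\bbz$, dispose of type (I) by the invariance of the $\bbz$-coordinate, and in type (II) count orbits on the two levels via the matrices $I-A^kN$), but your key step (iv) contains a sign error that inverts the dichotomy of the theorem. Since $NAN^{-1}=A^{-1}$ and $A$ is hyperbolic with distinct real eigenvalues $e^{t_0},e^{-t_0}$, the matrix $N=\varphi|_{\bbz^2}$ \emph{swaps} the two eigenlines of $A$, so in the eigenbasis of $A$ it is anti-diagonal, $PNP^{-1}=\left[\begin{smallmatrix}0&\gamma\\\delta&0\end{smallmatrix}\right]$, and therefore
$$
\det(I-A^kN)=1-\gamma\delta=1+\det N \qquad\text{for every }k\in\bbz .
$$
Hence $\det(I-N)=0$ precisely when $\det N=-1$ (that is the \emph{infinite} case, $R(\varphi)=\infty$), and $\det(I-A^kN)=2$ for all $k$ precisely when $\det N=+1$, which is the case giving $R(\varphi)=4$. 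Your claim ``$\det(I-N)=0$ unless $\det N=-1$, and when $\det N=-1$ the index is exactly $2$ on each level'' is exactly backwards; followed literally it would attach $R(\varphi)=4$ to the automorphisms that actually have $R(\varphi)=\infty$ and vice versa. The likely source is a conflation of two determinants: in the theorem's statement $\det\varphi$ is the determinant of the full ($3$-dimensional) linearization, which for type (II) picks up a factor $-1$ from $t\mapsto \bfa^{\ast}t^{-1}$, so $\det\varphi=-\det N$; the finiteness condition $\det\varphi=-1$ of the statement is $\det N=+1$ for your $2\times2$ restriction, not $\det N=-1$.

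Two smaller points. First, even after correcting the sign, your assertion that the index is ``exactly $2$ on each level'' needs the uniform identity $\det(I-A^kN)=1+\det N$ for all $k$ (so that conjugation by $\bfa^{\bfq}t^m$, which acts within a level through $I-A^{z+2m}N$ together with a translation, never enlarges the image lattice), plus a check that these translations do not merge the two residue classes on a given level; this is what the paper's explicit computation of $(\bfa^{\bfq}\tau^m)(\bfa^{\bfx}\tau^z)\varphi(\bfa^{\bfq}\tau^m)^{-1}=\bfa^{\bfM}\tau^{z+2m}$ accomplishes. Second, in the degenerate case $\det N=-1$ one should still justify that the singularity of all the maps $I-A^{z+2m}N$ forces infinitely many classes on each level (the images lie in proper sublattices/affine lines, and one exhibits infinitely many inequivalent representatives); the paper asserts this after the same determinant computation. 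With the determinant dichotomy corrected and these two verifications supplied, your argument becomes essentially the paper's proof.
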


\begin{proof}
Let $\varphi:\GammaA\to\GammaA$ be an automorphism. Then it is determined by
\begin{align*}
\varphi(a_1)=\bfa^{\bfu_{1}},\
\varphi(a_2)=\bfa^{\bfu_{2}},\
\varphi(\tau)=\bfa^{\bfp}\tau^{\pm1}
\end{align*}
where $\det[\bfu_1\ \bfu_{2}]=\det[\varphi]=\pm1$.
Notice that every element of $\GammaA$ is of the form $\bfa^\bfx \tau^z$. Its Reidemeister class is
$$
[\bfa^\bfx \tau^z] =\left\{(\bfa^\bfq\tau^m)(\bfa^\bfx \tau^z)\varphi(\bfa^\bfq\tau^m)^{-1}\mid \bfq\in\bbz^2,m\in\bbz \right\}.
$$
A simple computation shows that
$$
(\bfa^{\bfq}\tau^m)(\bfa^\bfx \tau^z)\varphi(\bfa^\bfq\tau^m)^{-1}=\bfa^*\tau^{z+m\mp m}.
$$
This implies that when $\varphi$ is of type (I), the distinct $z$'s yield distinct Reidemeister classes and so $R(\varphi)=\infty$.

Assume $\varphi$ is of type (II). Since
\begin{align*}
\varphi(\tau)^{m}&=(\bfa^\bfp\tau^{-1})^{m}\\
&=\begin{cases}
A^{-m}(A+A^2+\cdots+A^m)(\bfa^\bfp)\tau^{-m}, &\text{$m>0$;}\\
(A+A^2+\cdots+A^{-m})(\bfa^{-\bfp})\tau^{-m}, &\text{$m<0$,}
\end{cases}
\end{align*}
we have
\begin{align*}
&(\bfa^\bfq\tau^m)(\bfa^\bfx\tau^z)\varphi(\bfa^\bfq\tau^m)^{-1}=\bfa^\bfM\tau^{z+2m}
\end{align*}
where
\begin{align*}
\bfM
&=\left(I-A^{z+2m}[\varphi]\right)\bfq+A^m\bfx+\bfp_m,\\
\bfp_m&=
\begin{cases}
-\hspace{1pt}A^{z+m}\hspace{1pt}(A+A^2+\cdots+A^m)\bfp, &\text{$m>0$;}\\
A^{z+2m}(A+A^2+\cdots+A^{-m})\bfp, &\text{$m<0$;}\\
{\bf0}, &\text{$m=0$.}
\end{cases}
\end{align*}
We recall that there is $P$ such that
$$
PAP^{-1}=\left[\begin{matrix}e^{t_0}&0\\0&e^{-t_0}\end{matrix}\right],\
P[\varphi]P^{-1}=\left[\begin{matrix}0&\gamma\\\delta&0\end{matrix}\right].
$$
It follows that $\det(I-A^{z+2m}[\varphi])=1+\det[\varphi]$. If $z=0$, then
\begin{align*}
\bfM=\left(I-A^{2m}[\varphi]\right)\bfq+A^m\bfx+\bfp_m.
\end{align*}
If $\det\varphi=1$, then $\det[\varphi]=-1$ and $\det\left(I-A^{2m}[\varphi]\right)=0$ and so there are infinitely many Reidemeister classes and $R(\varphi)=\infty$. Assume $\det\varphi=-1$ or $\det[\varphi]=1$. Then $\det\left(I-A^{2m}[\varphi]\right)=2$ for all $m$. From this, we can show that there are four Reidemeister classes: $\{[1], [\tau], [\bfa^{\bfx_0}], [\bfa^{\bfx_1}\tau]\}$ where $\bfx_0\notin\im(I-[\varphi])$ and $\bfx_1\notin\im(I-A[\varphi])$. So, $R(\varphi)=4$.
\end{proof}

Recall that
$$
\Pi_2^\pm=\langle{a_1, a_2, \beta \ |
\begin{array}{l}
[a_1,a_2]=1,\ \beta a_i\beta^{-1}=N_\pm(a_i)
\end{array}}\rangle,
$$
where $N_\pm$ are square roots of $A$:
$$
N_\pm=-\left[\begin{matrix}\frac{\ell_{11}\pm1}{\sqrt{\ell_{11}+\ell_{22}\pm2}}
&\frac{\ell_{12}}{\sqrt{\ell_{11}+\ell_{22}\pm2}}\\
\frac{\ell_{21}}{\sqrt{\ell_{11}+\ell_{22}\pm2}}
&\frac{\ell_{22}\pm1}{\sqrt{\ell_{11}+\ell_{22}\pm2}}
\end{matrix}\right].
$$
Let $\varphi:\Pi_2^\pm\to\Pi_2^\pm$ be an automorphism. Then $\GammaA$ is a fully invariant subgroup of $\Pi_2^\pm$ and thus we have the following commutative diagram:
$$
\CD
1@>>>\GammaA@>>>\Pi_2^\pm@>>>\Phi_2^\pm@>>>1\\
@.@VV{\varphi'}V@VV{\varphi}V@VV{\bar\varphi}V\\
1@>>>\GammaA@>>>\Pi_2^\pm@>>>\Phi_2^\pm@>>>1
\endCD
$$
By Corollary~\ref{Reidemeister-Bieb}, we have
$$
R(\varphi)= \frac{1}{2}\left(R(\varphi')+R(\tau_{\sigma}\varphi')\right).
$$
Notice that $[\tau_\sigma\varphi']=N_\pm[\varphi']$. Thus $\varphi'$ and $\tau_\sigma\varphi'$ have the same type, and $\det\tau_\sigma\varphi'=\det\varphi'$ for $\Pi_2^+$ and $\det\tau_\sigma\varphi'=-\det\varphi'$ for $\Pi_2^-$. Assume $\varphi'$ is of type (II) with $\det\varphi'=-1$.
Recalling from Section~5.2 of \cite{HL3} that any automorphism on $\Pi_2^-$ cannot be of type (II), $\varphi'$ and $\tau_\sigma\varphi'$ are on $\Pi_2^+$ of type (II), and $\det\tau_\sigma\varphi'=-1$. Hence we have:

\begin{theorem}\label{GA}
For any automorphism $\varphi$ on $\Pi_2^+$, we have
$$
R(\varphi)=\begin{cases}
4&\text{when $\varphi$ is of type $\mathrm{(II)}$ and $\det\varphi'=-1$;}\\
\infty&\text{otherwise.}
\end{cases}
$$
The $\mathrm{SB}$-group $\Pi_2^-$ has the $R_\infty$ property.
\end{theorem}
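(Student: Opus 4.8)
The plan is to combine the reduction already set up above with Theorem~\ref{Pi_1-R}. Since $\GammaA$ is a fully invariant, finite-index subgroup of $\Pi_2^\pm$, the restriction $\varphi'=\varphi|_{\GammaA}$ is an automorphism of $\GammaA$, and likewise $\tau_\sigma\varphi'$ is an automorphism of $\GammaA$ (as $\GammaA$ is normal); applying Corollary~\ref{Reidemeister-Bieb} to the displayed diagram gives $R(\varphi)=\tfrac12\bigl(R(\varphi')+R(\tau_\sigma\varphi')\bigr)$, where $\sigma$ lifts the generator of the order-two holonomy group $\Phi_2^\pm$. So the whole computation reduces to reading off $R(\varphi')$ and $R(\tau_\sigma\varphi')$ from Theorem~\ref{Pi_1-R}: each equals $4$ if the map is of type (II) with determinant $-1$, and $\infty$ otherwise. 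The structural facts I would use, all already recorded above, are that $[\tau_\sigma\varphi']=N_\pm[\varphi']$ with $N_\pm$ invertible (so $\varphi'$ and $\tau_\sigma\varphi'$ share the same type), that $\det\tau_\sigma\varphi'=\det\varphi'$ on $\Pi_2^+$ while $\det\tau_\sigma\varphi'=-\det\varphi'$ on $\Pi_2^-$ (because $\det N_+=1$ and $\det N_-=-1$), and that no automorphism of $\Pi_2^-$ is of type (II), by Section~5.2 of \cite{HL3}.

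For $\Pi_2^+$ I would argue as follows. By the type- and determinant-compatibility, $\varphi'$ and $\tau_\sigma\varphi'$ are either both of type (II) with determinant $-1$, or both fail to be of type (II) with determinant $-1$. In the first case Theorem~\ref{Pi_1-R} gives $R(\varphi')=R(\tau_\sigma\varphi')=4$, so $R(\varphi)=\tfrac12(4+4)=4$; in the second case both Reidemeister numbers are $\infty$, so $R(\varphi)=\infty$. Since the type of $\varphi'$ and the value $\det\varphi'$ are precisely the data appearing in the statement, this is the asserted dichotomy.

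For $\Pi_2^-$ I would simply observe that, since no automorphism of $\Pi_2^-$ can be of type (II), the restriction $\varphi'$ is of type (I) or (III), whence $R(\varphi')=\infty$ by Theorem~\ref{Pi_1-R}; then $R(\varphi)\ge\tfrac12 R(\varphi')=\infty$ (equivalently, apply Theorem~\ref{HLP}.(1) with $\alpha=1$). As this holds for every automorphism $\varphi$, the group $\Pi_2^-$ has the $R_\infty$ property.

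There is no serious obstacle once Theorem~\ref{Pi_1-R} and the square roots $N_\pm$ are available; the argument is essentially bookkeeping with the averaging formula. If one had to establish the auxiliary facts from scratch, the real content would be the assertion that $\Pi_2^-$ admits no type-(II) automorphism, which needs the classification of $\aut(\Pi_2^-)$ from \cite{HL3}; by contrast the determinant computations $\det N_+=1$ and $\det N_-=-1$ are immediate from $\det A=1$ together with the explicit matrices displayed above.
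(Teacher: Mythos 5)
Your proposal is correct and follows essentially the same route as the paper: the averaging equality $R(\varphi)=\tfrac12\bigl(R(\varphi')+R(\tau_\sigma\varphi')\bigr)$ from Corollary~\ref{Reidemeister-Bieb}, the relation $[\tau_\sigma\varphi']=N_\pm[\varphi']$ giving equal type and the determinant signs $\det N_+=1$, $\det N_-=-1$, Theorem~\ref{Pi_1-R} to evaluate each term, and the fact from \cite{HL3} that $\Pi_2^-$ admits no type-(II) automorphism. The only difference is presentational (you spell out the determinant computation and the case split that the paper leaves implicit).
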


Next we consider the SB-groups $\Pi_3(\bfk,\bfk')$ and $\Pi_6(\bfk,\bfk')$.
\begin{theorem}\label{Pi_34-R}
The SB-groups $\Pi_3(\bfk,\bfk')$ and $\Pi_6(\bfk,\bfk')$ have the $R_\infty$ property.
\end{theorem}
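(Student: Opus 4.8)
The plan is to run the argument exactly parallel to the treatment of $\Pi_2^\pm$ given just above, with the square root $N_\pm$ of $A$ replaced by the linear holonomy of $\Pi_3$ (resp.\ $\Pi_6$). Let $\Pi$ be $\Pi_3(\bfk,\bfk')$ or $\Pi_6(\bfk,\bfk')$ with the parameters for which it is a Bieberbach group; in particular $m=0$, so the holonomy matrix is $M=\diag(-1,1)$. By the remark recalled above, $\GammaA=\Pi\cap\Sol$ is a characteristic subgroup of $\Pi$ of index $2$, and the holonomy group $\Phi=\Pi/\GammaA\cong\bbz_2$ has no nontrivial automorphism; hence any automorphism $\varphi$ of $\Pi$ restricts to an automorphism $\varphi'$ of $\GammaA$ and descends to the identity on $\Phi$, giving the commutative diagram required by Theorem~\ref{HLP}. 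Since $\Pi$ is torsion free and $\GammaA$ is polycyclic of Hirsch length $3$, Corollary~\ref{Reidemeister-Bieb} gives
$$
R(\varphi)=\tfrac{1}{2}\bigl(R(\varphi')+R(\tau_\beta\varphi')\bigr),
$$
where $\beta\in\Pi$ represents the nontrivial element of $\Phi$; so it suffices to prove that at least one of $R(\varphi')$, $R(\tau_\beta\varphi')$ is infinite. (Alternatively one can skip the averaging identity and invoke Theorem~\ref{HLP}.(1) directly, which does not even need torsion-freeness.)

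Next I would apply Theorem~\ref{Pi_1-R} to the two automorphisms $\varphi'$ and $\tau_\beta\varphi'$ of $\GammaA$: each has finite Reidemeister number if and only if it is of type $(\mathrm{II})$ and the determinant of its induced action on $\bbz^2=\GammaA\cap\bbr^2$ equals $-1$. The induced action of $\tau_\beta\varphi'$ on $\bbz^2$ is $M[\varphi']$, where $M=\diag(-1,1)$ is the linear part of conjugation by $\beta$; hence $\det[\tau_\beta\varphi']=(\det M)\det[\varphi']=-\det[\varphi']$. Thus $\det[\varphi']$ and $\det[\tau_\beta\varphi']$ have opposite signs, one of them equals $+1$, and for the corresponding automorphism Theorem~\ref{Pi_1-R} gives Reidemeister number $\infty$. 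Therefore $R(\varphi)=\infty$ for every automorphism $\varphi$ of $\Pi$, i.e.\ $\Pi_3(\bfk,\bfk')$ and $\Pi_6(\bfk,\bfk')$ have the $R_\infty$ property. Conceptually this aligns $\Pi_3$ and $\Pi_6$ with $\Pi_2^-$ rather than $\Pi_2^+$: the sign change forced by the holonomy ($\det M=-1$, exactly as $\det N_-=-1$) destroys the only configuration---type $(\mathrm{II})$ with linear determinant $-1$---that Theorem~\ref{Pi_1-R} permits to have finite Reidemeister number.

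The computational content is slight, so the part needing care is to make ``the linear part of $\tau_\beta$ is $M$'' precise from the presentations of $\Pi_3$ and $\Pi_6$: one must read off how the holonomy generator conjugates $a_1,a_2$, check that the resulting $\mathrm{GL}(2,\bbz)$-matrix has determinant $-1$, and observe that the relation $\det[\tau_\beta\varphi']=-\det[\varphi']$ is unaffected both by the choice of integral basis of $\bbz^2$ (which merely conjugates the matrices) and by the choice of coset representative $\beta$ (changing $\beta$ by an element of $\GammaA$ multiplies $M$ by a power of $A$, which has determinant $1$). I expect this bookkeeping, not any genuine difficulty, to be the main thing to pin down; everything else is a direct citation of Theorem~\ref{HLP}, Corollary~\ref{Reidemeister-Bieb} and Theorem~\ref{Pi_1-R}.
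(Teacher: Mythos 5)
Your argument is correct in substance, but it runs on a different mechanism than the paper and contains two slips worth fixing. The paper also reduces to the averaging formula over $\GammaA$ and to Theorems~\ref{Pi_1-R} and \ref{GA}, but the incompatibility it exploits is a \emph{type} switch: since the holonomy generator conjugates $t$ (resp.\ the square root's square) to $\bfa^*t^{-1}$, the twisted restriction $\tau_\beta\varphi'$ is of type (I) precisely when $\varphi'$ is of type (II), and type (I) automorphisms of $\GammaA$ always have infinite Reidemeister number. You instead exploit the \emph{determinant} flip $[\tau_\beta\varphi']=M[\varphi']$ with $\det M=-1$: since the finite case of Theorem~\ref{Pi_1-R} forces one definite sign of $\det[\varphi']$, at most one of $\varphi'$, $\tau_\beta\varphi'$ can have finite Reidemeister number, and Theorem~\ref{HLP}.(1) finishes. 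Both mechanisms are legitimate and of comparable weight; yours is the same sign argument the paper uses to separate $\Pi_2^-$ from $\Pi_2^+$, so the analogy you draw is apt.

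Two corrections. First, your reading of Theorem~\ref{Pi_1-R} has the wrong sign: the finite case is type (II) with $\det\varphi'=-1$, where $\det\varphi'$ is the determinant of the full automorphism of $\GammaA$; since in type (II) one has $\det\varphi'=-\det[\varphi']$, the determinant of the induced action on $\bbz^2$ in the finite case is $+1$, not $-1$. This is harmless for your proof (you only use that finiteness pins down a single sign, which the flip by $\det M=-1$ then makes impossible for both twists simultaneously), but the criterion should be quoted correctly. Second, $\GammaA$ does not have index $2$ in $\Pi_6(\bfk,\bfk')$: there $\GammaA=\langle a_1,a_2,\alpha^2\rangle$ has index $4$ and the holonomy group is $(\bbz_2)^2$, so your two-term averaging identity is wrong for $\Pi_6$ (the paper averages over four coset representatives), and the induced map on the holonomy need not be the identity. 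Your own fallback via Theorem~\ref{HLP}.(1) repairs this at once: all that is needed is that $\GammaA$ is characteristic and that one coset representative acts on $\bbz^2$ by a matrix of determinant $-1$, so the conclusion stands for both $\Pi_3(\bfk,\bfk')$ and $\Pi_6(\bfk,\bfk')$.
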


\begin{proof}
Denote the SB-groups $\Pi_3(\bfk,\bfk')$ and $\Pi_6(\bfk,\bfk')$ by $\Pi_3$ and $\Pi_6$ respectively.
Let $\varphi:\Pi_3\to\Pi_3$ be an automorphism. Since $\GammaA$ is a fully invariant subgroup of $\Pi_3$ (cf. \cite[Lemma~5.3]{HL3}), we have the following commutative diagram:
$$
\CD
1@>>>\GammaA@>>>\Pi_3@>>>\Phi_3@>>>1\\
@.@VV{\varphi'}V@VV{\varphi}V@VV{\bar\varphi}V\\
1@>>>\GammaA@>>>\Pi_3@>>>\Phi_3@>>>1
\endCD
$$
By Corollary~\ref{Reidemeister-Bieb}, we have
$$
R(\varphi)= \frac{1}{2}\left(R(\varphi')+R(\tau_{\alpha}\varphi')\right).
$$
Assume $\varphi'$ is of type (II) with $\det\varphi'=-1$. Then $R(\varphi')=4$. However, since $\tau_\alpha\varphi'(\tau)=\alpha\varphi(\tau)\alpha^{-1}=\alpha(\bfa^*\tau^{-1})\alpha^{-1}=\bfa^*\tau$, it follows that $\tau_\alpha\varphi'$ is of type (I) and hence $R(\tau_\alpha\varphi')=\infty$. In all, $R(\varphi)=\infty$.

Let $\varphi:\Pi_6\to\Pi_6$ be an automorphism. Then we have the following commutative diagram:
$$
\CD
1@>>>\GammaA@>>>\Pi_6@>>>\Phi_4@>>>1\\
@.@VV{\varphi'}V@VV{\varphi}V@VV{\bar\varphi}V\\
1@>>>\GammaA@>>>\Pi_6@>>>\Phi_4@>>>1
\endCD
$$
By Corollary~\ref{Reidemeister-Bieb}, we have
$$
R(\varphi)= \frac{1}{4}\left(R(\varphi')+R(\tau_{\sigma}\varphi')+R(\tau_{\alpha}\varphi')+R(\tau_{\sigma\alpha}\varphi')\right).
$$
Assume $\varphi'$ is of type (II) with $\det\varphi'=-1$. Since $\tau_\alpha\varphi'(\sigma^2)=\alpha\varphi(\sigma^2)\alpha^{-1}=\alpha(\bfa^*\sigma^{-2})\alpha^{-1}=\bfa^*\sigma^2$, it follows that $\tau_\alpha\varphi'$ is of type (I) and hence $R(\tau_\alpha\varphi')=\infty$. In all, $R(\varphi)=\infty$.
\end{proof}

Our aim is to continue the study of the $R_\infty$ property for the remaining SC-groups. In the following sections, we shall find a maximal characteristic subgroup of every SC-group $\Pi$ using Remark~\ref{FI} and then we use Theorem~\ref{HLP} to compute the Reidemeister numbers of all automorphisms on $\Pi$.
\bigskip

\section{The SC-groups $\Pi_1(\bfk)$}

Recall that
$$
\Pi_1(\bfk)=\left\langle{a_1, a_2, t, \beta \ \Big|
\begin{array}{l}
[a_1,a_2]=1,ta_it^{-1}=A(a_i),\\
\beta a_i\beta^{-1}=\bfa^{-\bfe_i},\ \beta^2=1,\
\beta t\beta^{-1}=\bfa^{\bfk}t
\end{array}}\right\rangle,
$$
where
$$
\bfe_1=(1,0)^t,\ \bfe_2=(0,1)^t,\ \bfk\in \frac{\bbz^2}{\left(2(\bbz^2)+\im(I-A)\right)}.
$$

\begin{lemma}\label{ch1}
Let $\varphi:\Pi_1(\bfk)\to\Pi_1(\bfk)$ be an automorphism. Then
\begin{align*}
&\varphi(a_i)=\bfa^{\bfn_i}, \,\, \varphi(t)=\bfa^{\bfp}t^{\pm1},\,\,
\varphi(\beta)=\bfa^{\bfx}\beta
\end{align*}
for some $\bfn_i,\bfp,\bfx\in\bbz^2$ satisfying the following conditions
\begin{align*}
&[\varphi]=[\bfn_1\ \bfn_2]=[n_{ij}]\in\GL(2,\bbz),\\
&A^{\pm1}[\varphi]=[\varphi] A,\\
&2\bfp=\begin{cases}
(I-A)\bfx+(I-[\varphi])\bfk &\text{when } \varphi(t)=\bfa^{\bfp}t;\\
(I-A^{-1})\bfx-(A^{-1}+[\varphi])\bfk &\text{when } \varphi(t)=\bfa^{\bfp}t^{-1}.
\end{cases}
\end{align*}
In particular, the subgroup $\GammaA=\langle a_1,a_2,t\rangle$ of $\Pi_1(\bfk)$ is a characteristic subgroup.
\end{lemma}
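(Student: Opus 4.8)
The plan is to determine $\varphi$ one generator at a time, descending the chain of characteristic subgroups $\langle b_1,b_2\rangle\subset\bbz^2\subset\GammaA\subset\Pi_1(\bfk)$, and then to read off the three matrix identities by applying $\varphi$ to the defining relators. I would first show $\varphi(a_i)=\bfa^{\bfn_i}$ with $[\varphi]=[\bfn_1\ \bfn_2]\in\GL(2,\bbz)$. By Remark~\ref{FI} the subgroup $\langle b_1,b_2\rangle$ is fully invariant in $\Pi_1(\bfk)$ and is a finite-index subgroup of $\bbz^2=\langle a_1,a_2\rangle$, say of index $d$. Since $\varphi$ is an automorphism, $\varphi(a_i)$ has infinite order, while $\varphi(a_i)^d=\varphi(a_i^d)\in\langle b_1,b_2\rangle\subset\bbz^2$. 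Now $\Pi_1(\bfk)/\bbz^2\cong\bbz\x\bbz_2$, with $\bbz$-factor generated by the class of $t$ and $\bbz_2$-factor by the class of $\beta$ (this uses only $\beta^2=1$ and $\beta t\beta^{-1}\equiv t\pmod{\bbz^2}$). Hence the class of $\varphi(a_i)$ lies in the torsion subgroup $\langle\bar\beta\rangle$, so $\varphi(a_i)\in\bbz^2\cup\bbz^2\beta$; and since every element of $\bbz^2\beta$ has order $2$, infinitude of the order of $\varphi(a_i)$ forces $\varphi(a_i)\in\bbz^2$. Thus $\varphi(\bbz^2)\subset\bbz^2$, and the same applied to $\varphi^{-1}$ gives $\varphi(\bbz^2)=\bbz^2$, so $\bbz^2$ is characteristic and $[\varphi]\in\GL(2,\bbz)$.

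Next I would analyze the induced automorphism $\bar\varphi$ of $\Pi_1(\bfk)/\bbz^2\cong\bbz\x\bbz_2$. Its unique element of order $2$ is the class of $\beta$, so $\bar\varphi(\bar\beta)=\bar\beta$, i.e.\ $\varphi(\beta)=\bfa^{\bfx}\beta$ for some $\bfx$; and $\bar\varphi(\bar t)=\bar t^{\pm1}\bar\beta^{\eta}$ for some $\eta\in\{0,1\}$, i.e.\ $\varphi(t)=\bfa^{\bfp}t^{\pm1}$ or $\varphi(t)=\bfa^{\bfp}t^{\pm1}\beta$. The crux of the argument — the one step that uses the geometry of $A$ and not just the finite quotient — is to exclude the case $\eta=1$, because $\bbz\x\bbz_2$ itself does admit automorphisms sending $\bar t\mapsto\bar t\bar\beta$. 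If $\varphi(t)=\bfa^{\bfp}t^{\varepsilon}\beta$, then conjugation by $\varphi(t)$ acts on $\bbz^2$ by the matrix $-A^{\varepsilon}$, whereas conjugation by $t$ acts by $A$, so applying $\varphi$ to $ta_it^{-1}=A(a_i)$ would give $-A^{\varepsilon}[\varphi]=[\varphi]A$, forcing $-A^{\varepsilon}$ to be conjugate over $\mathbb{Q}$ to $A$; this is impossible, since $A$ is hyperbolic with two positive eigenvalues and $-A^{\varepsilon}$ has two negative ones. Hence $\eta=0$ and $\varphi(t)=\bfa^{\bfp}t^{\pm1}$.

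Finally I would push the remaining relators through $\varphi$. The relations $[a_1,a_2]=1$, $\beta^2=1$ and $\beta a_i\beta^{-1}=\bfa^{-\bfe_i}$ hold automatically for the forms already obtained, using that $\bbz^2$ is abelian and that $\bfa^{\bfx}\beta$ conjugates $\bbz^2$ by $-I$. Applying $\varphi$ to $ta_it^{-1}=A(a_i)$ gives $A^{\pm1}[\varphi]=[\varphi]A$, with the sign matching the exponent of $t$ in $\varphi(t)$, since $\bfa^{\bfp}t^{\pm1}$ conjugates $\bbz^2$ by $A^{\pm1}$. Applying $\varphi$ to $\beta t\beta^{-1}=\bfa^{\bfk}t$ and comparing the $\bbz^2$-components of the two sides (their $t^{\pm1}$-parts already agree) yields a linear identity in $\bfp,\bfx,\bfk$; performing this short computation separately for $\varphi(t)=\bfa^{\bfp}t$ and for $\varphi(t)=\bfa^{\bfp}t^{-1}$ produces exactly $2\bfp=(I-A)\bfx+(I-[\varphi])\bfk$ and $2\bfp=(I-A^{-1})\bfx-(A^{-1}+[\varphi])\bfk$ respectively. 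Since $\varphi$ sends each generator $a_1,a_2,t$ of $\GammaA$ into $\GammaA$, the subgroup $\GammaA=\langle a_1,a_2,t\rangle$ is characteristic in $\Pi_1(\bfk)$, the last assertion. The only step needing an idea beyond bookkeeping is the exclusion of $\eta=1$; everything else is a careful but routine manipulation of the presentation.
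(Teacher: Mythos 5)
Your proposal is correct and follows essentially the same route as the paper: pin down the shapes of $\varphi(\beta)$, $\varphi(a_i)$, $\varphi(t)$, exclude a $\beta$-factor in $\varphi(t)$ via the spectral clash between $-A^{\pm1}$ and $A$ (the paper does this by diagonalizing $A$ and forcing $[\varphi]=0$, which is the same obstruction), and then read the stated conditions off the defining relations. The only minor difference is the first step: you obtain $\varphi(a_i)\in\bbz^2$ from the fully invariant lattice of Remark~\ref{FI} and the quotient $\Pi_1(\bfk)/\bbz^2\cong\bbz\times\bbz_2$, whereas the paper gets it more directly from $\varphi(\beta)=\bfa^{\bfx}\beta$, the relation $\beta a_i\beta^{-1}=a_i^{-1}$, and the fact that elements of the form $\bfa^{\bfx}\beta$ have order two.
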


\begin{proof}
Every element of $\Pi_1(\bfk)$ is of the form $\bfa^\bfx t^z\beta^w$ with $w\in\{0,1\}$.
Suppose $\varphi:\Pi_1(\bfk)\to\Pi_1(\bfk)$ is an automorphism.
Since $\beta$ is a torsion element of order $2$, so is $\varphi(\beta)$. It follows that $\varphi(\beta)=\bfa^{\bfx}\beta$.

If $\varphi(a_i)=\bfa^{\bfn_i} t^{m_i} \beta^w$,
then $\beta a_i\beta^{-1}=a_i^{-1} \Rightarrow m_i=0$.
Since $\bfa^{\bfn_i}\beta$ is torsion of order $2$ and $\varphi$ is an automorphism,
$\varphi(a_i)=\bfa^{\bfn_i}$.

We have shown that the subgroup $\langle a_1,a_2,\beta\rangle\subset\Pi_1(\bfk)$ is characteristic and hence $\varphi$ induces an automorphism on the quotient group $\Pi_1(\bfk)/\langle a_1,a_2,\beta\rangle\cong\bbz$. This implies that $\varphi(t)=\bfa^{\bfp}t^{\pm1}\beta^w$.
Assume $\varphi(t)=\bfa^{\bfp}t^{\pm1}\beta$. Then
$$
ta_it^{-1}=A(a_i) \Rightarrow -A^{\pm1}[\varphi]=[\varphi] A \quad\text{where }\,[\varphi]=[\bfn_1\ \bfn_2]=[n_{ij}].
$$
We choose an invertible matrix $P$ so that $PAP^{-1}=\diag\{e^{t_0},e^{-t_0}\}=D$ (see \cite[Remark~5.5]{HL2}). Let $Q=P[\varphi]P^{-1}$. Then $-D^{\pm1}Q=QD$. This induces $Q=[\varphi]=0$, contradicting that $\varphi$ is an automorphism. Thus $\varphi(t)=\bfa^{\bfp}t^{\pm1}$. In all, we have shown that $\GammaA$ is a characteristic subgroup of $\Pi_1(\bfk)$.

Observe further that
$ta_it^{-1}=A(a_i)$ induces
\begin{align*}
\varphi(ta_it^{-1})
&=\varphi(A(a_i)) \Rightarrow A^{\pm1}[\varphi]=[\varphi]A.
\end{align*}
From $\beta t \beta^{-1}= \bfa^{\bfk} t$, we also have
\begin{align*}
\varphi(\beta t \beta^{-1})
&=\varphi(\bfa^{\bfk})\varphi(t)
\Rightarrow (\bfa^{\bfx}\beta) (\bfa^{\bfp}t^{\pm1}) (\bfa^{\bfx}\beta)^{-1}=\varphi(\bfa^{\bfk})\bfa^{\bfp}t^{\pm1}.
\end{align*}
This identity induces
\begin{align*}
2\bfp&=\begin{cases}
(I-A)\bfx+(I-[\varphi])\bfk &\text{when } \varphi(t)=\bfa^{\bfp}t;\\
(I-A^{-1})\bfx-(A^{-1}+[\varphi])\bfk &\text{when } \varphi(t)=\bfa^{\bfp}t^{-1}.
\end{cases}
\end{align*}
This finishes the proof.
\end{proof}

\begin{rmk}
Consider a homomorphism $\varphi:\Pi_1(\bfk)\to\Pi_1(\bfk)$ defined by
$$
\varphi(\bfa^\bfx t^z\beta^w)=\beta^z, w=0,1.
$$
It is clear that $\varphi$ is not an automorphism and $\varphi(\GammaA)\nsubseteq\GammaA$. Thus $\GammaA$ is not a fully invariant subgroup of $\Pi_1(\bfk)$.
\end{rmk}

\begin{theorem} \label{Reid1}
Let $\varphi:\Pi_1(\bfk)\to\Pi_1(\bfk)$ be an automorphism and let $\varphi'=\varphi|_{\GammaA}$. Then $R(\varphi)=\infty$ if and only if $\varphi'$ is of type $\mathrm{(I)}$ or type $\mathrm{(III)}$ or type $\mathrm{(II)}$ with $\det\varphi'=1$. If $\varphi'$ is of type $\mathrm{(II)}$ with $\det\varphi'=-1$, then $4\le R(\varphi)<\infty$.
\end{theorem}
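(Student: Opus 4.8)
The plan is to run the averaging machinery of Theorem~\ref{HLP} along the characteristic subgroup $\GammaA$. By Lemma~\ref{ch1}, $\GammaA=\langle a_1,a_2,t\rangle$ is characteristic in $\Pi_1(\bfk)$ with quotient $\bbz_2=\langle\bar\beta\rangle$, so $\varphi$ restricts to an automorphism $\varphi'$ of $\GammaA$ and induces an automorphism $\bar\varphi$ of $\bbz_2$, necessarily $\bar\varphi=\id$. This gives the commutative diagram
$$
\CD
1@>>>\GammaA@>>>\Pi_1(\bfk)@>>>\bbz_2@>>>1\\
@.@VV{\varphi'}V@VV{\varphi}V@VV{\bar\varphi}V\\
1@>>>\GammaA@>>>\Pi_1(\bfk)@>>>\bbz_2@>>>1
\endCD
$$
to which Theorem~\ref{HLP} applies with $\psi=\id$. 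Taking $\{1,\beta\}$ as coset representatives of $\bbz_2$ in $\Pi_1(\bfk)$, part~(2) yields
$$
R(\varphi)\ \ge\ \tfrac12\bigl(R(\varphi')+R(\tau_\beta\varphi')\bigr),
$$
while part~(1) says $R(\varphi)<\infty$ if and only if $R(\tau_\alpha\varphi')<\infty$ for every $\alpha\in\Pi_1(\bfk)$ (note that $\tau_\alpha$ restricts to an automorphism of the normal subgroup $\GammaA$, so each $\tau_\alpha\varphi'$ is again an automorphism of $\GammaA$ and Theorem~\ref{Pi_1-R} applies to it).

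The second step is to track the type and the determinant of these restricted maps. By Lemma~\ref{ch1} we have $\varphi(t)=\bfa^\bfp t^{\pm1}$, so $\varphi'$ is of type $\mathrm{(I)}$ or type $\mathrm{(II)}$; in particular the type $\mathrm{(III)}$ alternative in the statement does not occur for automorphisms (and in any case would give $R(\varphi')=\infty$). Writing $\alpha=\bfa^\bfy t^w\beta^\epsilon$ and using the relations $ta_it^{-1}=A(a_i)$, $\beta a_i\beta^{-1}=\bfa^{-\bfe_i}$ and $\beta t\beta^{-1}=\bfa^\bfk t$, a direct computation (conjugation is invariant under the homomorphism $\Pi_1(\bfk)\to\bbz$ recording the $t$-exponent) gives $\tau_\alpha\varphi'(t)=\bfa^* t^{\pm1}$ with the same sign as $\varphi'(t)$, so $\tau_\alpha\varphi'$ has the same type as $\varphi'$; and $[\tau_\alpha\varphi']=(-1)^\epsilon A^w[\varphi']$, whence $\det\tau_\alpha\varphi'=\det\varphi'$ because $\det A=1$. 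Thus \emph{every} inner twist $\tau_\alpha\varphi'$ shares the type and the determinant of $\varphi'$.

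Now invoke Theorem~\ref{Pi_1-R}. If $\varphi'$ is of type $\mathrm{(I)}$, or of type $\mathrm{(II)}$ with $\det\varphi'=1$ (or, vacuously, of type $\mathrm{(III)}$), then $R(\varphi')=\infty$, and the averaging inequality forces $R(\varphi)=\infty$. If $\varphi'$ is of type $\mathrm{(II)}$ with $\det\varphi'=-1$, then by the previous paragraph every $\tau_\alpha\varphi'$ is of type $\mathrm{(II)}$ with determinant $-1$, so $R(\tau_\alpha\varphi')=4$ by Theorem~\ref{Pi_1-R}; hence $R(\varphi)<\infty$ by Theorem~\ref{HLP}.(1), and $R(\varphi)\ge\tfrac12(4+4)=4$ by the averaging inequality. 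Since, for an automorphism, $\varphi'$ is always of type $\mathrm{(I)}$ or $\mathrm{(II)}$ by Lemma~\ref{ch1}, these cases are exhaustive, which proves the stated equivalence together with the bounds $4\le R(\varphi)<\infty$.

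The work is concentrated in the second paragraph: one must handle the conjugation identities in $\Pi_1(\bfk)$ carefully enough to see that the type and the determinant of $\varphi'$ are genuinely preserved under \emph{every} inner automorphism of $\Pi_1(\bfk)$, since Theorem~\ref{HLP}.(1) requires $R(\tau_\alpha\varphi')$ to be controlled for all $\alpha$, not merely for coset representatives. I do not expect to pin down $R(\varphi)$ exactly in the type $\mathrm{(II)}$, $\det\varphi'=-1$ case: the equality clause of Theorem~\ref{HLP}.(2) requires $\coin(\tau_\alpha\varphi,\id)=\fix(\tau_\alpha\varphi)\subset\GammaA$, which may fail because of the torsion element $\beta$, so only the inequality $R(\varphi)\ge4$ is asserted.
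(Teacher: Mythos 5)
Your proposal is correct and follows essentially the same route as the paper: restrict to the characteristic subgroup $\GammaA$, apply Theorem~\ref{HLP} to get $R(\varphi)\ge\tfrac12\bigl(R(\varphi')+R(\tau_\beta\varphi')\bigr)$, check that conjugation preserves the type and determinant of $\varphi'$, and then invoke the computation of $R$ on $\GammaA$. Your explicit verification that \emph{every} twist $\tau_\alpha\varphi'$ (not just $\tau_\beta\varphi'$) keeps type $\mathrm{(II)}$ and determinant $-1$ is a slightly more careful treatment of the finiteness criterion in Theorem~\ref{HLP}.(1) than the paper spells out, but it is the same argument.
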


\begin{proof}
By Theorem~\ref{HLP}, we have
$$
R(\varphi)\ge\frac{1}{2}\left(R(\varphi')+R(\tau_\beta\varphi')\right).
$$
Observe further that
\begin{align*}
&\varphi'(a_i) =\bfa^{\bfn_i}, \,
&&\varphi'(t)=\bfa^{*}t^{\omega} \,\,(\omega\in\{\pm 1\}), \\
&\tau_\beta\varphi'(a_i)=\bfa^{-\bfn_i},\,
&&\tau_\beta\varphi'(t)=\bfa^{*}t^{\pm\omega}.
\end{align*}
Hence $\varphi'$ and $\tau_\beta\varphi'$ have the same type and $\det\varphi'=\det\tau_\beta\varphi'$. Thus $R(\varphi)=\infty$ if and only if $R(\varphi')=\infty$ if and only if $\varphi'$ is of type (I) or type (III) or type (II) with $\det\varphi'=1$ by Theorem~\ref{GA}. This proves the theorem.
\end{proof}

In the following, we will evaluate the Reidemeister numbers $R(\varphi)$ for all automorphisms $\varphi$ on $\Pi_1(\bfk)$ of type (II) with $\det\varphi'=-1$. This is exactly the case
when $R(\varphi)<\infty$, and $R(\varphi')=4$.

In this case, the corresponding $\tau_\beta\varphi'$ is also of type (II) and $\det\tau_\beta\varphi'=-1$, and so
$R(\tau_\beta\varphi')=4$.
Hence, from Theorem~\ref{HLP}, we have $4\le R(\varphi)<\infty$ and equality occurs if and only if $\gfix(\tau_\alpha\varphi)\subset\GammaA$ for all $\alpha\in\Pi_1(\bfk)$. Furthermore, since $\det\varphi'=-1$, the conditions of Lemma~\ref{ch1} become
\begin{align*}
&[\varphi]=\left[\begin{matrix}
-u&\frac{(\ell_{11}-\ell_{22})u-\ell_{12}v}{\ell_{21}}
\\\hspace{8pt}v&u\end{matrix}\right]
:=\left[\begin{matrix}-u&v'\\\hspace{8pt}v&u
\end{matrix}\right]\in\mathrm{SL}(2,\bbz),\\
&2\bfp=(I-A^{-1})\bfx-(A^{-1}+[\varphi])\bfk.
\end{align*}

We will find conditions on $\varphi$ for which both $\gfix(\varphi),\gfix(\tau_\beta\varphi)\subset\GammaA$. By Lemma~\ref{ch1}, we have $\varphi(t)=\bfa^\bfp t^{-1}$ and so $\tau_\beta\varphi(t)=\bfa^*t^{-1}$.
Let $\bfa^\bfr t^m\beta^w$ be fixed by $\varphi$ or $\tau_\beta\varphi$. Then we can see easily that $m=0$ and hence
$\gfix(\varphi),\gfix(\tau_\beta\varphi)\subset\{\bfa^{\bfr}\beta^w\mid \bfr\in\bbz^2, w\in\{0,1\}\}$.
Since $\varphi'$ is of type (II) and $\det\varphi'=-1$, we have $\det[\varphi]=1$ and so $\det(I\pm[\varphi])=1+\det[\varphi]=2$ and $(I-[\varphi])(I+[\varphi])=2I$. Thus, we have
\begin{align*}
\bfa^\bfr\beta\in\gfix(\varphi)
&\Leftrightarrow (I-[\varphi])\bfr=\bfx
\Leftrightarrow \bfr=\frac{1}{2}(I+[\varphi])\bfx\\
\bfa^\bfr\beta\in\gfix(\tau_\beta\varphi)
&\Leftrightarrow (I+[\varphi])\bfr=-\bfx
\Leftrightarrow
\bfr=-\frac{1}{2}(I-[\varphi])\bfx.
\end{align*}
Note that $\frac{1}{2}(I+[\varphi])\bfx +\frac{1}{2}(I-[\varphi])\bfx =\bfx$.
Hence we obtain that
\begin{align*}
\gfix(\varphi)\subset\GammaA &\Leftrightarrow \gfix(\tau_\beta\varphi)\subset\GammaA\\
&\Leftrightarrow (I+[\varphi])\bfx\notin2(\bbz^2) \Leftrightarrow \bfx\notin\im(I-[\varphi])\\
&\Leftrightarrow (I-[\varphi])\bfx\notin2(\bbz^2) \Leftrightarrow \bfx\notin\im(I+[\varphi]).
\end{align*}
Consequently, $R(\varphi)>4$ if and only if $(I\pm[\varphi])\bfx\in2(\bbz^2)$.

\begin{theorem}\label{Reid2}
Let $\varphi:\Pi_1(\bfk)\to\Pi_1(\bfk)$ be an automorphism. If $4< R(\varphi)<\infty$, then $R(\varphi)=8$.
\end{theorem}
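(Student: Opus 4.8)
The plan is to combine the exact sequence of Reidemeister sets from the Introduction with the explicit description of $\mathcal{R}[\varphi']$ obtained in the proof of Theorem~\ref{Pi_1-R}. By Theorem~\ref{Reid1} and the analysis preceding this statement, the hypothesis $4<R(\varphi)<\infty$ forces $\varphi'=\varphi|_{\GammaA}$ to be of type $\mathrm{(II)}$ with $\det\varphi'=-1$ — hence $[\varphi']\in\mathrm{SL}(2,\bbz)$ and $[\varphi']^2=-I$ — and, writing $\varphi(\beta)=\bfa^{\bfx}\beta$ as in Lemma~\ref{ch1}, it forces $\bfx\in\im(I-[\varphi])=\im(I+[\varphi])$; we also know $R(\varphi')=R(\tau_\beta\varphi')=4$. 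Since $\Pi_1(\bfk)/\GammaA\cong\bbz_2$ and $\bar\varphi$ is the identity, the exact sequence
$$
\mathcal{R}[\varphi']\stackrel{\hat i}{\lra}\mathcal{R}[\varphi]\stackrel{\hat u}{\lra}\mathcal{R}[\bar\varphi]\lra1
$$
gives $R(\varphi)=|\hat u^{-1}([\bar1])|+|\hat u^{-1}([\bar\beta])|$, with $\hat u^{-1}([\bar1])=\im\hat i$ a quotient of the four-element set $\mathcal{R}[\varphi']$ by a $\bbz_2$-action and (applying the same sequence to $\tau_\beta\varphi$, for which $R(\tau_\beta\varphi)=R(\varphi)$ via $g\mapsto g\beta$) with $\hat u^{-1}([\bar\beta])$ a quotient of $\mathcal{R}[\tau_\beta\varphi']$ by a $\bbz_2$-action. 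In particular $4\le R(\varphi)\le8$, so it suffices to prove that both $\bbz_2$-actions are trivial.

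Next I would make the $\bbz_2$-action on $\mathcal{R}[\varphi']$ explicit. Unwinding $\varphi$-twisted conjugacy by elements of $\GammaA\beta$ shows the generator acts by $[\gamma]\mapsto[\tau_\beta(\gamma)\,\bfa^{-\bfx}]$. The key reduction: since $\bfx\in\im(I-[\varphi'])$ we may write $\bfx=-(I-[\varphi'])\bfw$ with $\bfw\in\bbz^2$ unique, so that $\bfa^{-\bfx}=\bfa^{\bfw}\,\varphi'(\bfa^{\bfw})^{-1}$; twisting by $\bfa^{-\bfw}$ then turns the action into $[\gamma]\mapsto[\tau_\delta(\gamma)]$ where $\delta=\bfa^{-\bfw}\beta\in\Pi_1(\bfk)$. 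One checks directly that $\delta^2=1$ and $\varphi(\delta)=\delta$ (the latter from $\bfx=([\varphi']-I)\bfw$), so $\tau_\delta$ is an involutive automorphism of $\GammaA$ commuting with $\varphi'$, and it therefore permutes $\mathcal{R}[\varphi']$.

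The main step is to check that $\tau_\delta$ fixes each of the four classes $\{[1],[\bfa^{\bfx_0}],[t],[\bfa^{\bfx_1}t]\}$ from the proof of Theorem~\ref{Pi_1-R}. On $\langle a_1,a_2\rangle$ one has $\tau_\delta(a_i)=a_i^{-1}$, so on the two $t$-exponent-$0$ classes $\tau_\delta$ is $\bfa^{\mathbf{v}}\mapsto\bfa^{-\mathbf{v}}$, which is trivial on $\mathcal{R}[\varphi']$ because $-2\mathbf{v}=([\varphi']^2-I)\mathbf{v}=-(I-[\varphi'])(I+[\varphi'])\mathbf{v}\in\im(I-[\varphi'])$. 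For the $t$-exponent-$1$ classes one computes $\tau_\delta(t)=\bfa^{\bfk+(A-I)\bfw}t$, and — using $(I-A[\varphi'])(I+A[\varphi'])=2I$, which follows from $A[\varphi']A=[\varphi']$ and $[\varphi']^2=-I$ — fixing $[t]$ amounts to $(I-A[\varphi'])(\bfk+(A-I)\bfw)\in2(\bbz^2)$. This congruence drops out of the relation $2\bfp=(I-A^{-1})\bfx-(A^{-1}+[\varphi])\bfk$ of Lemma~\ref{ch1} after multiplying by $A$ and reducing modulo $2(\bbz^2)$ (using $(I-[\varphi'])\bfw=-\bfx$ along the way); then $[\bfa^{\bfx_1}t]$ is fixed too since $-2\bfx_1\in2(\bbz^2)\subset\im(I-A[\varphi'])$. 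Hence the $\bbz_2$-action on $\mathcal{R}[\varphi']$ is trivial and $|\hat u^{-1}([\bar1])|=4$.

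Finally I would repeat the argument for $\hat u^{-1}([\bar\beta])\cong\mathcal{R}[\tau_\beta\varphi']/\bbz_2$: here $[\tau_\beta\varphi']=-[\varphi']$ again lies in $\mathrm{SL}(2,\bbz)$ with square $-I$, the $\bbz_2$-action is $[\gamma]\mapsto[\tau_\beta(\gamma)\bfa^{\bfx}]$, and since $\bfx\in\im(I+[\varphi'])$ this is conjugation by the involution $\delta'=\bfa^{\bfw'}\beta$ with $(I+[\varphi'])\bfw'=-\bfx$, which is fixed by $\tau_\beta\varphi$; the identical computation (applying Lemma~\ref{ch1} to $\tau_\beta\varphi$) shows it is trivial as well. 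Therefore $|\hat u^{-1}([\bar\beta])|=4$ and $R(\varphi)=8$. I expect the only real obstacle to be the modulo-$2(\bbz^2)$ bookkeeping in the $t$-exponent-$1$ case — keeping straight how $\bfk$, $\bfx$ and $\bfw$ interact through $A^{\pm1}[\varphi]=[\varphi]A$ and the displayed relation for $2\bfp$ — everything else being formal once the two fibres are recognized as $\bbz_2$-quotients of $\mathcal{R}[\varphi']$ and $\mathcal{R}[\tau_\beta\varphi']$.
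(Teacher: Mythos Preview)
Your argument is correct and complete; the key algebraic inputs — $(I\pm[\varphi'])(I\mp[\varphi'])=2I$, $(I\pm A[\varphi'])(I\mp A[\varphi'])=2I$, and the relation $2\bfp=(I-A^{-1})\bfx-(A^{-1}+[\varphi'])\bfk$ from Lemma~\ref{ch1} — are exactly the ones the paper uses, and your verification that $(I-A[\varphi'])(\bfk+(A-I)\bfw)\in2(\bbz^2)$ is equivalent to the paper's computation that $A\bfx-\bfk\in\im(I\pm A[\varphi'])$.

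The organization, however, is genuinely different. The paper proceeds by a direct enumeration: it writes out the four twisted-conjugation formulas $(\bfa^\bfr t^m\beta^w)(\bfa^\bfq t^z\beta^{w'})\varphi(\bfa^\bfr t^m\beta^w)^{-1}$ for $w,w'\in\{0,1\}$, observes that every class has a representative of one of the four shapes $[\bfa^\bfq]$, $[\bfa^\bfq t]$, $[\bfa^\bfq\beta]$, $[\bfa^\bfq t\beta]$, and then shows that within each shape the equivalence reduces to $\bfq'-\bfq\in\im(I\pm A^\epsilon[\varphi'])$ for the appropriate sign and $\epsilon\in\{0,1\}$, giving $2$ classes of each shape and hence $R(\varphi)=8$. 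You instead invoke the exact sequence $\mathcal R[\varphi']\to\mathcal R[\varphi]\to\mathcal R[\bar\varphi]\to1$, identify each fibre as a $\bbz_2$-quotient of a four-element set, and then prove the action is trivial by replacing the affine involution $[\gamma]\mapsto[\tau_\beta(\gamma)\bfa^{-\bfx}]$ with conjugation by a genuine order-$2$ element $\delta=\bfa^{-\bfw}\beta$ fixed by $\varphi$. Your route makes transparent \emph{why} $R(\varphi)$ jumps from $4$ to $8$ (the fibres cannot collapse once $\bfx\in\im(I-[\varphi'])$), and it reuses the machinery of the Introduction rather than repeating a class-by-class calculation; the paper's route is more self-contained and yields the explicit representatives for all eight classes along the way.
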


\begin{proof}
Since $R(\varphi)$ is finite, $\varphi'=\varphi|_{\GammaA}$ is of type (II) and $\det\varphi'=-1$. The assumption that $R(\varphi)>4$ is equivalent to the condition that $(I\pm[\varphi])\bfx\in2(\bbz^2)$.
In this case, we can see that
\begin{equation}\label{fix-im}
\bfx\in\im(I\pm[\varphi]),\quad A\bfx\pm\bfk \in \im(I\pm A[\varphi])
\end{equation}
because $(I\mp[\varphi])\bfx \in 2(\bbz^2)$ and
\begin{align}
(I\mp[\varphi])(I\pm[\varphi])&=2I=(I\mp A[\varphi])(I\pm A[\varphi]),
\label{inv-im}\\
(I-A[\varphi])(A\bfx-\bfk)
&=2A\bfx-2A\bfp-2\bfk-(I+[\varphi])\bfx \in 2(\bbz^2), \notag\\
(I+A[\varphi])(A\bfx+\bfk)
&=2A\bfx-2A\bfp-(I-[\varphi])\bfx \in 2(\bbz^2)  \notag
\end{align}

Consider any element $\bfa^\bfq t^z\beta^w\in\Pi_1(\bfk)$.
For any $\bfy\in\bbz^2$,
we define
$$
\bfy_m=\begin{cases}
(I+A+\cdots+A^{m-1})\bfy,&\text{$m>0$;}\\
-(A^{-1}+A^{-2}+\cdots+A^{m})\bfy,&\text{$m<0$;}\\
{\bf0},&\text{$m=0$.}
\end{cases}
$$
Then
$$
(\bfa^{\bfy}t)^m =\bfa^{\bfy_m} t^m, \quad
(\bfa^{\bfy}t^{-1})^m =\bfa^{A^{1-m}\bfy_m} t^{-m}.
$$

We will determine its Reidemeister class $[\bfa^\bfq t^z\beta^w]$ as a subset of $\Pi_1(\bfk)$. First we observe that
\begin{align*}
&(\bfa^\bfr t^m)(\bfa^\bfq t^z)\varphi(\bfa^\bfr t^m)^{-1}\\
&\qquad=\bfa^{(I-A^{2m+z}[\varphi])\bfr+A^m\bfq-A^{m+z+1}\bfp_m}t^{2m+z},\\
&(\bfa^\bfr t^m \beta)(\bfa^\bfq t^z)\varphi(\bfa^\bfr t^m \beta)^{-1}\\
&\qquad=\bfa^{(I-A^{2m+z}[\varphi])\bfr-A^m\bfq+A^m\bfk_z-A^{m+z}\bfx-A^{m+z+1}\bfp_m}t^{2m+z},\\
&(\bfa^\bfr t^m)(\bfa^\bfq t^z\beta)\varphi(\bfa^\bfr t^m)^{-1}\\
&\qquad=\bfa^{(I+A^{2m+z}[\varphi])\bfr+A^m\bfq+A^{m+z+1}\bfp_m+A^{m+z}\bfk_m}t^{2m+z}\beta,\\
&(\bfa^\bfr t^m \beta)(\bfa^\bfq t^z\beta)\varphi(\bfa^\bfr t^m \beta)^{-1}\\
&\qquad=\bfa^{(I+A^{2m+z}[\varphi])\bfr-A^m\bfq+A^m\bfk_z+A^{m+z}\bfx+A^{m+z+1}\bfp_m+A^{m+z}\bfk_m} t^{2m+z}\beta.
\end{align*}
Thus every Reidemeister class is one of the following forms $[\bfa^\bfq], [\bfa^{\bfq'}t], [\bfa^\bfq\beta]$ and $[\bfa^{\bfq'}t\beta]$; these classes are distinct each other.

Since $\bfx \in \im(I-[\varphi])$ and $(I-[\varphi])(I+[\varphi])=2I$,
we have
\begin{align*}
[\bfa^{\bfq}]=[\bfa^{\bfq'}]  &\Leftrightarrow
\bfq'-\bfq \in \im(I-[\varphi]) \text{ or }
\bfq'+\bfq +\bfx \in \im(I-[\varphi]) \\
&\Leftrightarrow \bfq' \pm \bfq \in \im(I-[\varphi]) \\
&\Leftrightarrow \bfq' - \bfq \in \im(I-[\varphi]).
\end{align*}
This shows that there are exactly $|\det(I-[\varphi])|=2$ Reidemeister classes of the form $[\bfa^\bfq]$.

Similarly, using the condition~(\ref{fix-im}) and the identity~(\ref{inv-im}), we have
\begin{align*}
&[\bfa^{\bfq}z]=[\bfa^{\bfq'}z] \\
&\qquad\Leftrightarrow
\bfq'-\bfq \in \im(I-A[\varphi]) \text{ or }
\bfq'+\bfq +A\bfx-\bfk \in \im(I-A[\varphi]) \\
&\qquad\Leftrightarrow \bfq' - \bfq \in \im(I-A[\varphi]),\\
&[\bfa^{\bfq}\beta]=[\bfa^{\bfq'}\beta]  \\
&\qquad\Leftrightarrow
\bfq'-\bfq \in \im(I+[\varphi]) \text{ or }
\bfq'+\bfq -\bfx \in \im(I+[\varphi]) \\
&\qquad\Leftrightarrow \bfq' - \bfq \in \im(I+[\varphi]),\\
&[\bfa^{\bfq}z\beta]=[\bfa^{\bfq'}z\beta]  \\
&\qquad\Leftrightarrow
\bfq'-\bfq \in \im(I+A[\varphi]) \text{ or }
\bfq'+\bfq -(A\bfx+\bfk) \in \im(I+A[\varphi]) \\
&\qquad\Leftrightarrow \bfq' - \bfq \in \im(I+A[\varphi]).
\end{align*}
Since $|\det(I\pm A[\varphi])|=|\det(I\pm[\varphi])|=2$,
there are exactly $2$ Reidemeister classes of each of
the forms $[\bfa^\bfq z]$,
$[\bfa^\bfq\beta]$ and $[\bfa^\bfq z\beta]$.

In all, we have shown that there are exactly $2\x4=8$ Reidemeister classes in $\Pi_1(\bfk)$. That is, $R(\varphi)=8$.
\end{proof}

We have shown in Theorems~\ref{Reid1} and \ref{Reid2} that if $R(\varphi)<\infty$ then $\varphi'$ is of type (II) and $\det\varphi'=-1$. In this case, $R(\varphi)=4$ or $8$.

In the SC-groups $\Pi_1(\bfk)$, we recall that $\bfk\in\bbz^2/L$, where the lattice $L:=2(\bbz^2)+\im(I-A)$ is generated by the vectors
$$
\left[\begin{matrix}2\\0\end{matrix}\right],
\left[\begin{matrix}0\\2\end{matrix}\right],
\left[\begin{matrix}1-\ell_{11}\\-\ell_{21}\end{matrix}\right],
\left[\begin{matrix}-\ell_{12}\\1-\ell_{22}\end{matrix}\right].
$$
Thus $\bfk$ depends on $A$.
Indeed we can see that
\begin{align*}
(\ell_{11},\ell_{12},\ell_{21},\ell_{22})
=(e,o,o,o) &\Rightarrow \bfk={\bf0};\\
(\ell_{11},\ell_{12},\ell_{21},\ell_{22})
=(o,o,o,e) &\Rightarrow \bfk={\bf0};\\
(\ell_{11},\ell_{12},\ell_{21},\ell_{22})
=(e,o,o,e) &\Rightarrow \bfk={\bf0} \text{ or } \bfe_1;\\
(\ell_{11},\ell_{12},\ell_{21},\ell_{22})
=(o,e,o,o) &\Rightarrow \bfk={\bf0} \text{ or } \bfe_1;\\
(\ell_{11},\ell_{12},\ell_{21},\ell_{22})
=(o,o,e,o) &\Rightarrow \bfk={\bf0} \text{ or } \bfe_2;\\
(\ell_{11},\ell_{12},\ell_{21},\ell_{22})
=(o,e,e,o) &\Rightarrow \bfk={\bf0},\bfe_1,\bfe_2 \text{ or }\bfe_1+\bfe_2.
\end{align*}
Here we denote by $o$ an odd integer and by $e$ an even integer.
{Recall that every automorphism $\varphi$ on $\Pi_1(\bfk)$ is determined by some $\bfn_i,\bfp,\bfx\in\bbz^2$ satisfying the conditions in Lemma~\ref{ch1}.} Now, given $A$ and $\bfk$, we will discuss the conditions on $A$ and on the automorphisms $\varphi$ on $\Pi_1(\bfk)$ for which $R(\varphi)=8$.

\begin{theorem} Let $A$ be the defining matrix of the pure lattice subgroup $\GammaA$ of $\Pi_1(\bfk)$. Let $\varphi:\Pi_1(\bfk)\to\Pi_1(\bfk)$ be any automorphism so that $\varphi'=\varphi|_{\GammaA}$ is of type $\mathrm{(II)}$ and $\det\varphi'=-1$. Then we have:
\begin{enumerate}
\item[$(1)$] When $\tr A$ is odd, $\bfk={\bf0}$ and $R(\varphi)=8$.
\item[$(2)$] When both $\ell_{11}$ and $\ell_{22}$ are even, $\bfk={\bf0}$ or $\bfe_1$ and $R(\varphi)=8$.
\item[$(3)$] When both $\ell_{11}, \ell_{21}$ and $\ell_{22}$ are odd and $\ell_{12}$ is even, $\bfk={\bf0}$ or $\bfe_1$ and $R(\varphi)=8$.
\item[$(4)$] When both $\ell_{11}, \ell_{12}$ and $\ell_{22}$ are odd and $\ell_{21}$ is even, $\bfk={\bf0}$ or $\bfe_2$ and $R(\varphi)=8$.
\item[$(5)$] When both $\ell_{11}, \ell_{22}$ are odd and both $\ell_{12}, \ell_{21}$ are even, then $\bfk={\bf0},\bfe_1,\bfe_2$ or $\bfe_1+\bfe_2$. Moreover, $R(\varphi)=8$ if and only if $\varphi$ is determined by the following conditions: For $\bfx=(x,y)^t$,
\begin{itemize}
\item when $\bfk={\bf0}$, $\bfx\in\im(I-[\varphi])$;
\item when $\bfk=\bfe_1$, either
\begin{align*}
[\varphi]=\left[\begin{matrix}-o&e\\\hspace{8pt}e&o\end{matrix}\right]
\text{ or } [\varphi]=\left[\begin{matrix}-o&o\\\hspace{8pt}e&o\end{matrix}\right] \text{ with $y$ even;}
\end{align*}
\item when $\bfk=\bfe_2$, either
\begin{align*}
[\varphi]=\left[\begin{matrix}-o&e\\\hspace{8pt}e&o\end{matrix}\right]
\text{ or } [\varphi]=\left[\begin{matrix}-o&e\\\hspace{8pt}o&o\end{matrix}\right] \text{ with $x$ even;}
\end{align*}
\item when $\bfk=\bfe_1+\bfe_2$, either
\begin{align*}\hspace{2cm}
[\varphi]=\left[\begin{matrix}-o&e\\\hspace{8pt}e&o\end{matrix}\right]
\text{ or } [\varphi]=\left[\begin{matrix}-e&o\\\hspace{8pt}o&e\end{matrix}\right] \text{ with
$\bfx=(e,e)^t$ or $(o,o)^t$.}
\end{align*}
\end{itemize}
\end{enumerate}
\end{theorem}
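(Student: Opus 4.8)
This is a computation modulo $2$, and the plan is to first isolate the right mod-$2$ criterion and then run it through the six parity types of $A$. By Theorems~\ref{Reid1} and \ref{Reid2} and the discussion preceding the latter, for $\varphi$ with $\varphi'=\varphi|_{\GammaA}$ of type $(\mathrm{II})$ and $\det\varphi'=-1$ one has $R(\varphi)\in\{4,8\}$, and $R(\varphi)=8$ exactly when $(I\pm[\varphi])\bfx\in2(\bbz^2)$, equivalently $\bfx\in\im(I-[\varphi])$ (equivalently $\bfx\in\im(I+[\varphi])$). So everything is governed by the reductions mod $2$ of the data attached to $\varphi$ by Lemma~\ref{ch1}: the matrix $[\varphi]=\left[\begin{smallmatrix}-u&v'\\ v&u\end{smallmatrix}\right]$ with $\ell_{21}v'=(\ell_{11}-\ell_{22})u-\ell_{12}v$ and $\det[\varphi]=-u^2-v'v=1$ (so $[\varphi]^2=-I$), together with the integrality requirement $2\bfp=(I-A^{-1})\bfx-(A^{-1}+[\varphi])\bfk\in2(\bbz^2)$, i.e.\ the congruence $(I-A^{-1})\bfx\equiv(A^{-1}+[\varphi])\bfk\pmod2$. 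I would record at the outset the identity $\det(I-A^{-1})=2-\tr A$, so that $I-A^{-1}$ is invertible mod $2$ precisely when $\tr A$ is odd, and recall the dictionary (displayed just above the theorem) between the parity pattern of $(\ell_{11},\ell_{12},\ell_{21},\ell_{22})$ --- only six occur because $\det A=1$ --- and the admissible $\bfk$.

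Parts (1)--(4) should all come out as ``$R(\varphi)=8$ unconditionally''. For (1), $\tr A$ is odd and $\bfk=\mathbf 0$, so the congruence reads $(I-A^{-1})\bfx\equiv\mathbf 0\pmod2$ with $I-A^{-1}$ invertible mod $2$; hence $\bfx\in2(\bbz^2)$ and $(I\pm[\varphi])\bfx\in2(\bbz^2)$ trivially. For (2)--(4), $\tr A$ is even, so $I-A^{-1}$ has rank $\le1$ mod $2$; but here the relation $\ell_{21}v'\equiv(\ell_{11}-\ell_{22})u-\ell_{12}v$ combined with $u+v'v\equiv1\pmod2$ pins down the parities of $u,v,v'$ (for instance in (3), with $\ell_{12}$ even and $\ell_{11},\ell_{21},\ell_{22}$ odd, it forces $v'$ even and $u$ odd), which puts $I+[\varphi]$ mod $2$ into a rank-$\le1$ shape. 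One then checks, after confirming the congruence is consistent (i.e.\ $(A^{-1}+[\varphi])\bfk$ lies in the mod-$2$ image of $I-A^{-1}$, which in the delicate case uses $u+v'v\equiv1$), that $(I+[\varphi])\bfx\equiv\mathbf 0\pmod2$ holds for every admissible $\bfx$, whether $\bfk=\mathbf 0$ or $\bfk=\bfe_i$; so $R(\varphi)=8$.

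Part (5) is where conditions survive, because there $A\equiv I\pmod2$: the congruence collapses to $(I+[\varphi])\bfk\equiv\mathbf 0\pmod2$, which no longer restricts $\bfx$ but instead selects which classes of $[\varphi]$ mod $2$ can occur with a given $\bfk$, while the commuting relation becomes vacuous mod $2$. Thus $[\varphi]$ mod $2$ is constrained only by $\det=1$, $\tr=0$; the a priori candidates $I$, $\left[\begin{smallmatrix}1&1\\0&1\end{smallmatrix}\right]$, $\left[\begin{smallmatrix}1&0\\1&1\end{smallmatrix}\right]$, $\left[\begin{smallmatrix}0&1\\1&0\end{smallmatrix}\right]$ get cut down by the observation that $[\varphi]\equiv I$ is impossible, since $[\varphi]^2=-I$ would force $\det[\varphi]\equiv-1\pmod4$ (a mod-$4$ obstruction coming from $u$ odd, $v,v'$ even). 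For each $\bfk\in\{\mathbf 0,\bfe_1,\bfe_2,\bfe_1+\bfe_2\}$ I would intersect the surviving patterns with the selection rule $(I+[\varphi])\bfk\equiv\mathbf 0$ and then translate $R(\varphi)=8\iff(I+[\varphi])\bfx\equiv\mathbf 0\pmod2$ into the asserted coordinate conditions on $\bfx=(x,y)^t$; for $\bfk=\mathbf 0$ there is no selection and this is just ``$\bfx\in\im(I-[\varphi])$''.

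The main obstacle is exactly this case-(5) bookkeeping: enumerating, for each of the four values of $\bfk$, which mod-$2$ classes of $[\varphi]$ are compatible with the existence of the automorphism (the selection rule) and which are excluded a priori (the mod-$4$ ruling-out of $[\varphi]\equiv I$), and matching the resulting free-$\bfx$ congruences against the stated list. A secondary point, recurring in (2)--(4), is verifying solvability of the integrality congruence for $\bfx$ when $\bfk\ne\mathbf 0$, where the identity $u+v'v\equiv1\pmod2$ and the precise mod-$2$ form of $A^{-1}$ in each parity type are what is needed.
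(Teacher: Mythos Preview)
Your proposal is correct and follows the same approach as the paper: reduce via Theorems~\ref{Reid1} and~\ref{Reid2} to the criterion $R(\varphi)=8\Leftrightarrow(I\pm[\varphi])\bfx\in2(\bbz^2)$, feed in the integrality constraint $2\bfp=(I-A^{-1})\bfx-(A^{-1}+[\varphi])\bfk$ from Lemma~\ref{ch1}, and run a mod-$2$ case analysis over the six parity types of $A$. The paper does exactly this, with the same case split and the same use of $\ell_{21}v'=(\ell_{11}-\ell_{22})u-\ell_{12}v$ and $-u^2-v'v=1$ to constrain parities.

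One point worth flagging: your mod-$4$ observation that $[\varphi]\equiv I\pmod2$ cannot occur (since $u$ odd and $v,v'$ even would force $u^2+v'v\equiv1\pmod4$, contradicting $u^2+v'v=-1$) is \emph{sharper} than what the paper does. The paper treats that parity pattern as a live subcase in parts~(2) and~(5) and simply verifies the conclusion holds there. In particular, the disjunct $[\varphi]=\left[\begin{smallmatrix}-o&e\\\hspace{4pt}e&o\end{smallmatrix}\right]$ appearing in the statement of part~(5) for $\bfk=\bfe_1,\bfe_2,\bfe_1+\bfe_2$ is vacuous by your argument. Your proof still establishes the theorem as stated, since a vacuous disjunct does not affect the biconditional; but you may want to remark explicitly that these cases are empty, so the reader does not wonder why your case list in~(5) is shorter than the one in the statement.
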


\begin{proof}
Remark that $\det(I-A)=2-\tr A$. Consider first the case where $\tr A$ is odd.
Then $\ell_{12}$ and $\ell_{21}$ are both odd, and $\bfk={\bf0}$.
In this case, we have the condition $2\bfp=(I-A^{-1})\bfx$.
From this identity, we can conclude that
$\bfx\in2(\bbz^2)$ and hence $(I-[\varphi])\bfx\in2(\bbz^2)$.

Consider next the case where $\tr A=\ell_{11}+\ell_{22}$ is even.
We divide into two cases.

{\sc Case 1:} Both $\ell_{11}$ and $\ell_{22}$ are even. \newline
Then both $\ell_{11}$ and $\ell_{22}$ are odd, and $\bfk={\bf0}$ or $\bfe_1$.
If $\bfk={\bf0}$, then since all the entries of $I-A^{-1}$ are odd, the condition $2\bfp=(I-A^{-1})\bfx$ induces that {both entries of $\bfx$ are even or odd}. If both entries of $\bfx$ are even, then $(I-[\varphi])\bfx\in2(\bbz^2)$. Consider the case where both entries of $\bfx$ are odd. We claim that all the entries of $I-[\varphi]$ are either even or odd. This will imply that $(I-[\varphi])\bfx\in2(\bbz^2)$.
If $u$ is even, then since $\det[\varphi]=-1$, both $v$ and $v'$ are odd and hence all the entries of $I-[\varphi]$ are odd. This shows that $(I-[\varphi])\bfx\in2(\bbz^2)$. If $u$ is odd, then $v$ or $v'$ is even; we can see easily that if $v$ is even, then $v'$ must be even and so all the entries of $I-[\varphi]$ are even and hence $(I-[\varphi])\bfx\in2(\bbz^2)$; if $v$ is odd and $v'$ must be odd, which is impossible.

Assume $\bfk=\bfe_1$. Consider the condition
\begin{align*}
2\bfp&=(I-A^{-1})\bfx-(A^{-1}+[\varphi])\bfe_1\\
&=\left[\begin{matrix}1-\ell_{22}&\ell_{12}\\\ell_{21}&1-\ell_{11}
\end{matrix}\right]\left[\begin{matrix}x\\y\end{matrix}\right]
-\left[\begin{matrix}\hspace{8pt}\ell_{22}\\-\ell_{21}
\end{matrix}\right]-\left[\begin{matrix}-u\\\hspace{8pt}v
\end{matrix}\right].
\end{align*}
If $v$ is odd, then $\ell_{21}x+(1-\ell_{11})y$ must be even and so both $x$ and $y$ are either even or odd; if both are even then $(I-[\varphi])\bfx\in2(\bbz^2)$, and if both are odd then by the claim above we have $(I-[\varphi])\bfx\in2(\bbz^2)$. If $v$ is even, then $\ell_{21}x+(1-\ell_{11})y$ must be odd and so $u$ must be odd, and furthermore $v'$ must be even, which yields that all the entries of $I-[\varphi]$ are even and so $(I-[\varphi])\bfx\in2(\bbz^2)$.

{\sc Case 2:} Both $\ell_{11}$ and $\ell_{22}$ are odd. \newline
Then $(\ell_{12},\ell_{21})=(e,o), (o,e)$ or $(e,e)$.
When $(\ell_{12},\ell_{21})=(e,o)$, then $\bfk={\bf0}$ or $\bfe_1$ and $v'$ is even.
From the fact that $\det\varphi'=-1$, we have $u$ is odd. When $\bfk={\bf0}$, the condition $2\bfp=(I-A^{-1})\bfx$ induces that $x$ must be even. Hence $(I-[\varphi])\bfx\in2(\bbz^2)$.
When $\bfk=\bfe_1$,we have the relation $2\bfp=(I-A^{-1})\bfx-(A^{-1}+[\varphi])\bfe_1$, which induces that $x$ is odd if and only if $v$ is even. It follows that $(I-[\varphi])\bfx\in2(\bbz^2)$.
When $(\ell_{12},\ell_{21})=(o,e)$, we repeat the above argument verbatim.

We finally consider the case where $(\ell_{12},\ell_{21})=(e,e)$.
Then $\bfk={\bf0},\bfe_1,\bfe_2$ or $\bfe_1+\bfe_2$.
When $\bfk=\bfe_1+\bfe_2$, the condition $2\bfp=(I-A^{-1})\bfx-(A^{-1}+[\varphi])(\bfe_1+\bfe_2)$ induces that both $-u+v'$ and $u+v$ are odd.
Thus $(u,v,v')=(o,e,e)$ or $(e,o,o)$.
If $(u,v,v')=(o,e,e)$, then all the entries of $I-[\varphi]$ are even and so
$(I-[\varphi])\bfx\in2(\bbz^2)$.
Now suppose $(u,v,v')=(e,o,o)$. Then all the entries of $I-[\varphi]$ are odd. Thus
$(I-[\varphi])\bfx\in2(\bbz^2)$ if and only if both $x$ and $y$ are either even or odd.

{When $\bfk={\bf0}$,} for any $\bfx$, since all the elements of $I-A^{-1}$ are even there is a unique $\bfp$ such that the condition $2\bfp=(I-A^{-1})\bfx$ holds. Hence for any automorphism $\varphi$ on $\Pi_1({\bf0})$, if it is  determined by $\bfx\in\im(I-[\varphi])$, then $R(\varphi)=8$ and vice versa.

{When $\bfk=\bfe_1$,} the condition $2\bfp=(I-A^{-1})\bfx-(A^{-1}+[\varphi])\bfe_1$ induces that $u$ is odd and $v$ is even. If $v'$ is even, then $(I-[\varphi])\bfx\in2(\bbz^2)$; if $v'$ is odd, then $(I-[\varphi])\bfx\in2(\bbz^2)\Leftrightarrow y\in2\bbz$. Hence for any automorphism $\varphi$ on $\Pi_1(\bfe_1)$, $(I-[\varphi])\bfx\in2(\bbz^2)$ if and only if $v'$ is even or else $v'$ is odd and $y\in2\bbz$.

{When $\bfk=\bfe_2$,} we repeat the above argument verbatim.
Hence for any automorphism $\varphi$ on $\Pi_1(\bfe_2)$, $(I-[\varphi])\bfx\in2(\bbz^2)$ if and only if $v$ is even or else $v$ is odd and $x\in2\bbz$.
\end{proof}
\bigskip

\section{The SC-groups $\Pi_3(\bfk,\bfk')$}

Recall that
$$
\Pi_3(\bfk,\bfk')=\left\langle{ a_1, a_2, t, \beta \ \Big|
\begin{array}{l}
[a_1,a_2]=1,\ ta_it^{-1}=A(a_i),\\
\beta a_i\beta^{-1}=M(a_i),
\beta^2=\bfa^{\bfk},\ \beta t\beta^{-1}=\bfa^{\bfk'}t^{-1}
\end{array}
}\right\rangle,
$$
where
$$
M=\left[\begin{matrix}-1&m\\\hspace{8pt}0&1
\end{matrix}\right]
$$
and $MAM^{-1}=A^{-1}$, and
\begin{align*}
(\bfk,\bfk'-\bfk)\in \frac{\ker(I-M)}{\im(I+M)}\oplus\frac{\ker(A-M)}{\im(A^{-1}+M)}.
\end{align*}

\begin{lemma}\label{P3}
The subgroup $\GammaA=\langle a_1,a_2,t\rangle$ of $\Pi_3(\bfk,\bfk')$ is a characteristic subgroup.
\end{lemma}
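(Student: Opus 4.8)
The plan is to mimic the strategy used for $\Pi_1(\bfk)$ in Lemma~\ref{ch1}: identify the torsion and quotient structure of $\Pi_3(\bfk,\bfk')$ so tightly that an arbitrary automorphism $\varphi$ is forced to preserve the lattice $\GammaA=\langle a_1,a_2,t\rangle$. First I would record that every element of $\Pi_3(\bfk,\bfk')$ has the normal form $\bfa^\bfx t^z\beta^w$ with $w\in\{0,1\}$, and that $\GammaA$ is the subgroup of elements with $w=0$. The abelianization or, more usefully, the quotient $\Pi_3(\bfk,\bfk')/\langle a_1,a_2\rangle$ is generated by the images of $t$ and $\beta$, with $\beta$ acting on $t$ by inversion (from $\beta t\beta^{-1}=\bfa^{\bfk'}t^{-1}$); this quotient is (a central extension of) the infinite dihedral group, and the translation subgroup $\langle a_1,a_2\rangle\cong\bbz^2$ is already characteristic since it is the unique maximal abelian normal subgroup of finite index whose centralizer is itself (here I would invoke the $\Sol$-geometry: $\bbr^2$ is intrinsic). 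So it suffices to show $\varphi(t)\in\GammaA\cdot\langle a_1,a_2\rangle$, i.e.\ that $\varphi(t)$ has no $\beta$-part.

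The key step is a case analysis on where $t$ can go. Since $\langle a_1,a_2\rangle$ is characteristic, $\varphi$ induces an automorphism $\bar\varphi$ of $Q:=\Pi_3(\bfk,\bfk')/\langle a_1,a_2\rangle$. In $Q$ the element $\bar t$ has infinite order while $\bar\beta$ satisfies $\bar\beta^2=\bar 1$ (the image of $\bfa^\bfk$) and $\bar\beta\bar t\bar\beta^{-1}=\bar t^{-1}$; the elements of infinite order in $Q$ are exactly the powers of $\bar t$ (every element $\bar t^z\bar\beta$ squares into $\langle\bar t\rangle^{?}$ — more precisely $(\bar t^z\bar\beta)^2=\bar t^z\bar\beta\bar t^z\bar\beta=\bar t^z\bar t^{-z}\bar\beta^2=\bar 1$, so $\bar t^z\bar\beta$ is always torsion). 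Hence $\bar\varphi(\bar t)$, being of infinite order, must be some $\bar t^{\pm1}$ (it generates the same cyclic subgroup as $\bar t$ since $\bar\varphi$ is an automorphism of $Q$ whose unique maximal infinite cyclic normal-ish subgroup is $\langle\bar t\rangle$ — one checks $\langle\bar t\rangle$ is characteristic in $Q$ as the set of infinite-order elements together with $\bar 1$). Lifting back, $\varphi(t)=\bfa^\bfp t^{\pm1}$, with no $\beta$-component. This already gives $\varphi(\GammaA)\subseteq\GammaA$; since $\varphi$ is an automorphism and $\GammaA$ has finite index, $\varphi(\GammaA)=\GammaA$, so $\GammaA$ is characteristic.

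I expect the main obstacle to be the clean verification that $\langle\bar t\rangle$ (equivalently, the set of infinite-order elements) really is characteristic in the quotient $Q$ — one must handle the central twist coming from $\beta^2=\bfa^\bfk$ carefully, because if $\bfk\ne\bf 0$ then $\beta$ itself has infinite order in $\Pi_3(\bfk,\bfk')$ and the simple "$\beta$ is torsion" shortcut used for $\Pi_1(\bfk)$ is unavailable. The fix is to work modulo $\langle a_1,a_2\rangle$ where $\bar\beta$ does become an involution, so the dihedral computation $(\bar t^z\bar\beta)^2=\bar 1$ goes through and pins down the infinite-order elements. Alternatively, and perhaps more in the spirit of the paper, one can cite that $\GammaA=\Pi_3(\bfk,\bfk')\cap\Sol$ is the maximal normal subgroup on which the holonomy acts with no fixed vectors in a suitable sense and invoke \cite[Lemma~5.3]{HL3}-type reasoning as was done for $\Pi_2^\pm$; but the self-contained dihedral-quotient argument above avoids appealing to external structure theory and is the route I would write up.
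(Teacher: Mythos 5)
There is a genuine gap: you assume the crux of the lemma. The entire difficulty in the paper's proof is to show that $\langle a_1,a_2\rangle$ is characteristic in $\Pi_3(\bfk,\bfk')$, i.e.\ that $\varphi(a_1),\varphi(a_2)$ have no $t$- or $\beta$-component. The paper does this by a case analysis on the four possible shapes of $(\varphi(a_1),\varphi(a_2))$ compatible with $[a_1,a_2]=1$, eliminating the shapes involving $\beta$ via the relation $ta_it^{-1}=A(a_i)$ together with the finite-index fully invariant subgroup $\Lambda\subset\GammaA$ of Remark~\ref{FI}, and then killing the residual $t$-exponents using that $\langle\bfa^{\bfm_1},\bfa^{\bfm_2}\rangle$ is fully invariant in $\Lambda$. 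You instead assert characteristicity of $\langle a_1,a_2\rangle$ outright, and the justification offered is wrong as stated: $\langle a_1,a_2\rangle$ has \emph{infinite} index in $\Pi_3(\bfk,\bfk')$, so it is not ``the unique maximal abelian normal subgroup of finite index''; and the appeal to ``$\bbr^2$ is intrinsic in $\Sol$-geometry'' presupposes that an abstract automorphism of $\Pi_3(\bfk,\bfk')$ is realized affinely, which is itself a nontrivial rigidity statement that the paper neither uses nor cites here. A correct repair along your lines does exist --- one can show $\langle a_1,a_2\rangle$ is the unique maximal abelian (equivalently nilpotent) normal subgroup, using hyperbolicity of $A$ to see that a normal subgroup containing $\bfa^{\bfx}t^z$ with $z\ne0$ meets $\bbz^2$ in a finite-index subgroup and is not nilpotent, and that a normal subgroup containing an element with a $\beta$-part is forced (by conjugating with $t$) to contain some $\bfa^{*}t^{\pm2}$ --- but this argument must actually be supplied; as written your proposal omits the only hard step.

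The second half of your argument is fine and coincides with the paper's conclusion: once $\langle a_1,a_2\rangle$ is known to be characteristic, the quotient $\Pi_3(\bfk,\bfk')/\langle a_1,a_2\rangle\cong\bbz\rtimes\bbz_2$ is infinite dihedral (here $\bar\beta^2=\bar1$ since $\beta^2=\bfa^{\bfk}$), every element $\bar t^z\bar\beta$ is an involution, so $\langle\bar t\rangle$ is characteristic and $\varphi(t)=\bfa^{\bfp}t^{\pm1}$, giving $\varphi(\GammaA)=\GammaA$. You correctly identify that the ``$\beta$ is torsion'' shortcut from Lemma~\ref{ch1} is unavailable when $\bfk\ne\mathbf{0}$, but you locate the main obstacle in the wrong place: it is the invariance of $\langle a_1,a_2\rangle$, not the dihedral computation, that carries the weight of the proof.
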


\begin{proof}
Let $\varphi:\Pi_3(\bfk,\bfk')\to\Pi_3(\bfk,\bfk')$ be an automorphism. Every element of $\Pi_3(\bfk,\bfk')$ is of the form
$\bfa^\bfx t^z \beta^w$ with $w=0$ or $1$.

Because $[a_1,a_2]=1$, there are the following $4$ possibilities:
\begin{enumerate}
\item $\varphi(a_i)=\bfa^{\bfn_{i}}t^{z_i}$,
\item $\varphi(a_i)=\bfa^{\bfn_{i}}t^{z}\beta$,
\item $\varphi(a_1)=\bfa^{\bfn_{1}}, \varphi(a_2)=\bfa^{\bfn_2}t^{z}\beta$,
\item $\varphi(a_1)=\bfa^{\bfn_{1}}t^{z}\beta, \varphi(a_2)=\bfa^{\bfn_2}$.
\end{enumerate}
We will show the last three possibilities cannot occur.

Consider the possibility (2) $\varphi(a_i)=\bfa^{\bfn_{i}}t^z\beta$. Assume that $\varphi(t)=\bfa^\bfx t^v\beta^w$ with $w=0$ or $1$. Then, noting that $(\bfa^\bfx t^v\beta)^\text{even}=\bfa^*$ and $(\bfa^\bfx t^v\beta)^\text{odd}=\bfa^*t^v\beta$, we have
\begin{align*}
ta_it^{-1}=A(a_i) &\Rightarrow
(\bfa^\bfx t^v\beta^w)(\bfa^{\bfn_{i}}t^{z}\beta)(\bfa^\bfx t^v\beta^w)^{-1}
=\varphi(a_1^{\ell_{1i}}a_2^{\ell_{2i}})\\
&\Rightarrow \ell_{1i}+\ell_{2i} \text{ is odd and so } \bfa^* t^{2v\pm z}\beta=\bfa^*t^{z}\beta.
\end{align*}
Thus $\varphi(t)=\bfa^\bfx$ or $\varphi(t)=\bfa^\bfx t^z\beta$. By Remark~\ref{FI}, there is a fully invariant, finite index, subgroup $\Lambda$ of $\Pi_3(\bfk,\bfk')$ such that $\Lambda\subset\GammaA$. By Remark~\ref{FI}, $\Lambda$ is generated by some elements of the form $\bfa^{\bfm_1}, \bfa^{\bfm_2}$ and $\bfa^*t^k$. Since $\varphi(\Lambda)\subset\Lambda$, and since $\langle\bfa^{\bfm_1}, \bfa^{\bfm_2}\rangle$ is a fully invariant subgroup of $\Lambda$, $\varphi$ induces an automorphism on $\Lambda/\langle\bfa^{\bfm_1}, \bfa^{\bfm_2}\rangle\cong\bbz$.  It follows from the above observation that $\varphi(t)$ cannot be $\bfa^\bfx$ or $\bfa^\bfx t^z\beta$.

Consider the possibility (3) $\varphi(a_1)=\bfa^{\bfn_{1}}, \varphi(a_2)=\bfa^{\bfn_2}t^{z}\beta$.
Assume that $\varphi(t)=\bfa^\bfx t^v\beta^w$. Then we have
\begin{align*}
ta_1t^{-1}=A(a_1) &\Rightarrow \text{$\ell_{21}$ is even and so both $\ell_{11}$ and $\ell_{22}$ are odd},\\
ta_2t^{-1}=A(a_2) &\Rightarrow {2v\pm z_2}=z_2 \text{ (since $\ell_{22}$ is odd) and hence}\\
&\hspace{0.65cm}\text{if $w=0$ then $v=0$ and if $w=1$ then $v=z_2$}.
\end{align*}
As before, any case cannot happen. In a similar way, the possibility (4) cannot occur.

Therefore we must have the only possibility (1) $\varphi(a_i)=a^{\bfn_{i}}t^{z_i}$. From  Remark~\ref{FI} again, since $\langle \bfa^{\bfm_1}, \bfa^{\bfm_2}\rangle$ is a finite index subgroup of $\langle a_1,a_2\rangle\cong\bbz^2$, we have $\det[\bfm_1\ \bfm_2]\ne0$. Furthermore,  $\langle \bfa^{\bfm_1}, \bfa^{\bfm_2}\rangle$ is fully invariant, and so $\varphi(\bfa^{\bfm_i})\in \langle \bfa^{\bfm_1}, \bfa^{\bfm_2}\rangle$. It follows that $z_1=z_2=0$, i.e., $\varphi(a_i)=\bfa^{\bfn_i}$. This shows that the subgroup $\langle a_1,a_2\rangle$ is a characteristic subgroup of $\Pi_3(\bfk,\bfk')$, so $\varphi$ induces an automorphism on the quotient group $\Pi_3(\bfk,\bfk')/\langle a_1,a_2\rangle\cong\bbz\rtimes\bbz_2$. This implies that
$\varphi(t)=\bfa^\bfp t^{\pm1}$ and $\varphi(\beta)=\bfa^\bfx t^z\beta$.
In particular, $\GammaA$ is a characteristic subgroup of $\Pi_3(\bfk,\bfk')$. Denote by $\varphi'$ the restriction of $\varphi$ on $\GammaA$, and denote $[\varphi']=[\bfn_1\ \bfn_2]$.
\end{proof}

\begin{theorem}
The SC-groups $\Pi_3(\bfk,\bfk')$ have the $R_\infty$ property.
\end{theorem}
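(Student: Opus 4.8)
The plan is to mimic the argument already used in Theorem~\ref{Pi_34-R} for the special (torsion-free) parameter values, but now cover \emph{all} admissible $(\bfk,\bfk')$, including those for which $\Pi_3(\bfk,\bfk')$ is not an SB-group. By Lemma~\ref{P3}, $\GammaA=\langle a_1,a_2,t\rangle$ is a characteristic subgroup of $\Pi_3(\bfk,\bfk')$ with finite quotient $\Phi_3\cong\bbz_2$, so for any automorphism $\varphi$ we obtain the commutative diagram with vertical maps $\varphi',\varphi,\bar\varphi$ exactly as in the proof of Theorem~\ref{Pi_34-R}. Applying Theorem~\ref{HLP}.(2) gives
$$
R(\varphi)\ge\frac12\Bigl(R(\varphi')+R(\tau_\alpha\varphi')\Bigr),
$$
where $\alpha\in\Pi_3(\bfk,\bfk')$ is any lift of the nontrivial element of $\Phi_3$, e.g.\ $\alpha=\beta$. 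So it suffices to show that at least one of the two summands $R(\varphi')$, $R(\tau_\beta\varphi')$ is infinite.

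First I would recall from Lemma~\ref{P3} that $\varphi'=\varphi|_{\GammaA}$ satisfies $\varphi(t)=\bfa^\bfp t^{\pm1}$ and $\varphi(\beta)=\bfa^\bfx t^z\beta$, while the defining relation $\beta t\beta^{-1}=\bfa^{\bfk'}t^{-1}$ forces a constraint linking the exponent of $t$ in $\varphi(t)$ to that in $\varphi(\beta)$; concretely, conjugating $\varphi(t)$ by $\varphi(\beta)$ must invert the $t$-power, and since $\beta$-conjugation already inverts $t$ (because $MAM^{-1}=A^{-1}$), one finds that \emph{$\varphi'$ is forced to be of type (I)}, i.e.\ $\varphi(t)=\bfa^\bfp t$. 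This is the same phenomenon exploited in Theorem~\ref{Pi_34-R}: there $\tau_\alpha\varphi'(t)=\bfa^*t$ was shown to be of type (I). The key computation to carry out is: apply $\varphi$ to $\beta t\beta^{-1}=\bfa^{\bfk'}t^{-1}$ and read off the $\bbz$-component (the $t$-exponent), using that $t$-exponents are unaffected by the $\bfa$-part; this yields $\pm1 = \mp 1$-type bookkeeping that pins down the sign. Then, since $\varphi'$ is of type (I), Theorem~\ref{Pi_1-R} gives $R(\varphi')=\infty$, and hence $R(\varphi)=\infty$ by the averaging inequality above.

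The main obstacle I anticipate is the bookkeeping needed to be certain that \emph{every} automorphism $\varphi$ restricts to a type-(I) map on $\GammaA$ — one must handle both possible signs in $\varphi(t)=\bfa^\bfp t^{\pm1}$ and in $\varphi(\beta)=\bfa^\bfx t^z\beta$, check the relation $\beta t\beta^{-1}=\bfa^{\bfk'}t^{-1}$ is compatible only with the type-(I) choice, and also rule out type (III) (which is automatic once $m=\pm1$ is excluded, since $\varphi$ is an automorphism of an infinite group whose abelianization has a $\bbz$ summand coming from $t$, so $\varphi$ cannot kill $t$ modulo torsion). Once that sign analysis is done the conclusion is immediate, so the whole proof is short. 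An alternative, if the sign analysis is delicate, is to observe directly that $\tau_\beta\varphi'(t)$ and $\varphi'(t)$ always have \emph{opposite} type (exactly as $[\tau_\sigma\varphi']=N_\pm[\varphi']$ flipped things in the $\Pi_2^\pm$ discussion, here conjugation by $\beta$ multiplies the linear part by $M$ and inverts the $t$-power), so whichever of $\varphi'$, $\tau_\beta\varphi'$ is \emph{not} of type (II)-with-$\det=-1$ has infinite Reidemeister number by Theorem~\ref{Pi_1-R}; since the type-(I) one is always present, $R(\varphi)=\infty$ regardless.
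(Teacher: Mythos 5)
Your main route contains a false step: the relation $\beta t\beta^{-1}=\bfa^{\bfk'}t^{-1}$ does \emph{not} force $\varphi'$ to be of type (I). Write $\varphi(t)=\bfa^{\bfp}t^{\epsilon}$ with $\epsilon=\pm1$ and $\varphi(\beta)=\bfa^{\bfx}t^{z}\beta$ as in Lemma~\ref{P3}, and apply $\varphi$ to the relation. On the left, conjugation by $\beta$ sends $t^{\epsilon}$ to $(\bfa^{\bfk'}t^{-1})^{\epsilon}=\bfa^{*}t^{-\epsilon}$, and the further conjugations by $t^{z}$ and $\bfa^{\bfx}$ do not change $t$-exponents; on the right, $\varphi(\bfa^{\bfk'})\varphi(t)^{-1}=\bfa^{*}t^{-\epsilon}$. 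So both sides have $t$-exponent $-\epsilon$ for either sign, and the bookkeeping you propose yields no constraint on $\epsilon$, only conditions on the $\bfa$-parts. Indeed type (II) restrictions genuinely occur: the inner automorphism $\tau_\beta$ of $\Pi_3(\bfk,\bfk')$ restricts on $\GammaA$ to $t\mapsto\bfa^{\bfk'}t^{-1}$, which is of type (II). A proof resting on the claim that type (I) is forced would therefore collapse exactly in the case that must be handled, namely $\varphi'$ of type (II) with $\det\varphi'=-1$, where $R(\varphi')=4$ is finite.

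Your fallback, however, is exactly the paper's argument and it does close the gap: since $\tau_\beta\varphi'(t)=\bfa^{M\bfp}(\bfa^{\bfk'}t^{-1})^{\epsilon}=\bfa^{*}t^{-\epsilon}$, the restrictions $\varphi'$ and $\tau_\beta\varphi'$ always have opposite types (type (III) is excluded since $\varphi'$ is an automorphism of $\GammaA$), so one of them is of type (I) and hence has infinite Reidemeister number by Theorem~\ref{Pi_1-R}; the averaging inequality $R(\varphi)\ge\tfrac12\left(R(\varphi')+R(\tau_\beta\varphi')\right)$ from Theorem~\ref{HLP}, available because $\GammaA$ is characteristic by Lemma~\ref{P3}, then gives $R(\varphi)=\infty$. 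Promote this observation to the main argument and delete the claim that every automorphism restricts to a type (I) map; with that change your plan coincides with the paper's proof.
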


\begin{proof}
For any automorphism $\varphi$ on $\Pi_3(\bfk,\bfk')$, by writing $\varphi'=\varphi|_{\GammaA}$, we have
$$
R(\varphi)\ge\frac{1}{2}\left(R(\varphi')+R(\tau_\beta\varphi')\right).
$$
Observe that
$$
\tau_\beta\varphi'(a_i)=\bfa^{M\bfn_i},\
\tau_\beta\varphi'(t)=\bfa^{M\bfp}(\bfa^{\bfk'}t^{-1})^{\pm1}.
$$
This implies that
$\varphi'$ is of type (II) if and only if $\tau_\beta\varphi'$ is of type (I).
Consequently, the theorem follows from Theorem~\ref{GA}.
\end{proof}
\bigskip

\section{The SC-groups $\Pi_4(\bfk)$}

Recall that
$$
\Pi_4(\bfk)=\left\langle{ a_1, a_2, \alpha, \beta \ |
\begin{array}{l}
[a_1,a_2]=1,\ \alpha a_i\alpha^{-1}=a_i^{-1}, \ \alpha^2=1,\\
\beta a_i\beta^{-1}=N(a_i),\ \alpha\beta \alpha^{-1}=\bfa^{\bfk}\beta
\end{array}}\right\rangle,
$$
where $A$ has a square root $N$
\begin{align*}
N=- \left[\begin{matrix}\tfrac{\ell_{11}-1}{\sqrt{\ell_{11}+\ell_{22}-2}}
&\tfrac{\ell_{12}}{\sqrt{\ell_{11}+\ell_{22}-2}}\\\tfrac{\ell_{21}}{\sqrt{\ell_{11}+\ell_{22}-2}}
&\tfrac{\ell_{22}-1}{\sqrt{\ell_{11}+\ell_{22}-2}}\end{matrix}\right]
\end{align*}
and
$$
\bfk\in \frac{\bbz^2}{\left(2(\bbz^2)+\im(I-N)\right)}.
$$

\begin{lemma}
The subgroup $\GammaA=\langle a_1,a_2,\beta^2\rangle$ of $\Pi_4(\bfk)$ is a characteristic subgroup.
\end{lemma}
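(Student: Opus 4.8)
The plan is to follow the pattern of Lemmas~\ref{ch1} and \ref{P3}: analyse how an arbitrary automorphism $\varphi$ of $\Pi_4(\bfk)$ acts on the torsion elements and on the generators, deduce that $\varphi$ fixes $a_1,a_2$ up to translation, and then verify that $\varphi(\beta)^2$ lies in $\GammaA=\langle a_1,a_2,\beta^2\rangle$ even though $\varphi(\beta)$ itself may not.

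First I would record the normal form: using $\alpha\bfa^\bfy\alpha^{-1}=\bfa^{-\bfy}$, $\beta\bfa^\bfy\beta^{-1}=\bfa^{N\bfy}$ and $\alpha\beta\alpha^{-1}=\bfa^\bfk\beta$, every element of $\Pi_4(\bfk)$ is uniquely of the form $\bfa^\bfx\beta^n\alpha^w$ with $w\in\{0,1\}$, which reflects the extension $1\to\langle a_1,a_2\rangle\to\Pi_4(\bfk)\to\bbz_2\times\bbz\to1$ with $\langle a_1,a_2\rangle\cong\bbz^2$, $\bbz_2=\langle\bar\alpha\rangle$ and $\bbz=\langle\bar\beta\rangle$. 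A torsion element must map to a torsion element of $\bbz_2\times\bbz$, hence has $n=0$; since $\langle a_1,a_2\rangle$ is torsion free and $(\bfa^\bfx\alpha)^2=\bfa^\bfx\bfa^{-\bfx}\alpha^2=1$, the non-trivial torsion elements of $\Pi_4(\bfk)$ are precisely $\{\bfa^\bfx\alpha\mid\bfx\in\bbz^2\}$, all of order $2$. In particular $\varphi(\alpha)=\bfa^\bfx\alpha$ for some $\bfx\in\bbz^2$. I expect this torsion classification to be the step needing the most care — especially excluding elements of order $>2$ and torsion hidden in the cosets $\langle a_1,a_2\rangle\beta^n$, $n\neq0$ — but it is immediate once the normal form and the holonomy quotient $\bbz_2\times\bbz$ are in hand.

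Next I would pin down $\varphi(a_i)$ and $\varphi(\beta)$. From $\alpha a_i\alpha^{-1}=a_i^{-1}$ one gets $(\alpha a_i)^2=1$, so $\alpha a_i$ has order $2$; applying $\varphi$, the element $\bfa^\bfx\alpha\,\varphi(a_i)$ is torsion of order at most $2$, and it is nontrivial (otherwise $\varphi(a_i)$, hence $a_i$, would be torsion), so by the classification it equals $\bfa^{\bfz}\alpha$ for some $\bfz\in\bbz^2$, whence $\varphi(a_i)=\bfa^{\bfn_i}\in\langle a_1,a_2\rangle$. Thus $\varphi$ maps $\langle a_1,a_2,\alpha\rangle$ into itself; this subgroup is normal in $\Pi_4(\bfk)$ (one checks $\beta\alpha\beta^{-1}=\bfa^{-\bfk}\alpha$), so $\varphi$ induces a surjective endomorphism of $\Pi_4(\bfk)/\langle a_1,a_2,\alpha\rangle\cong\bbz$, which is $\pm\mathrm{id}$; hence $\varphi(\beta)=\bfa^\bfp\alpha^w\beta^{\epsilon}$ for some $\bfp\in\bbz^2$, $w\in\{0,1\}$ and $\epsilon\in\{1,-1\}$.

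Finally I would compute $\varphi(\beta^2)=\varphi(\beta)^2=(\bfa^\bfp\alpha^w\beta^{\epsilon})^2$. Expanding with $\beta^{\epsilon}\bfa^\bfy\beta^{-\epsilon}=\bfa^{N^{\epsilon}\bfy}$, $\alpha\bfa^\bfy\alpha^{-1}=\bfa^{-\bfy}$ and $\alpha\beta^{\epsilon}\alpha^{-1}=\bfa^{\bfl}\beta^{\epsilon}$ for a suitable $\bfl\in\bbz^2$ (a consequence of $\alpha\beta\alpha^{-1}=\bfa^\bfk\beta$), the two occurrences of $\alpha^w$ cancel for both $w=0$ and $w=1$, giving $\varphi(\beta)^2=\bfa^{*}\beta^{2\epsilon}\in\langle a_1,a_2\rangle\cdot\langle\beta^2\rangle=\GammaA$. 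Hence $\varphi(\GammaA)=\langle\varphi(a_1),\varphi(a_2),\varphi(\beta^2)\rangle\subseteq\GammaA$, and since $[\Pi_4(\bfk):\GammaA]<\infty$, applying this to $\varphi^{-1}$ as well gives $\varphi(\GammaA)=\GammaA$; thus $\GammaA$ is characteristic. Alternatively, one could identify $\langle a_1,a_2\rangle$ with the Fitting subgroup of $\Pi_4(\bfk)$ (hence characteristic) and note that $\GammaA/\langle a_1,a_2\rangle$ is the characteristic subgroup $\{0\}\times2\bbz$ of $\bbz_2\times\bbz$, but the generator-chasing argument above stays closer to the style of the rest of the paper.
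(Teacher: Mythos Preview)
Your proposal is correct and follows essentially the same route as the paper: classify the torsion elements to get $\varphi(\alpha)=\bfa^\bfx\alpha$, use the relation $\alpha a_i\alpha^{-1}=a_i^{-1}$ (you phrase it as $(\alpha a_i)^2=1$, the paper applies $\varphi$ to the conjugation relation directly---these are equivalent) together with torsion to force $\varphi(a_i)\in\langle a_1,a_2\rangle$, then pass to the quotient $\Pi_4(\bfk)/\langle a_1,a_2,\alpha\rangle\cong\bbz$ to get $\varphi(\beta)=\bfa^\bfp\beta^{\pm1}\alpha^w$ and square. Your extra care about normality of $\langle a_1,a_2,\alpha\rangle$ and about why the induced map on $\bbz$ is $\pm\id$ is welcome but not a genuinely different idea.
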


\begin{proof}
Let $\varphi:\Pi_4(\bfk)\to\Pi_4(\bfk)$ be an automorphism. Every element of $\Pi_4(\bfk)$ is of the form $\bfa^\bfx \beta^z\alpha^w$ with $w=0$ or $1$.

Because $\alpha^2=1$, it follows that {$\varphi(\alpha)=\bfa^{\bfx}\alpha$}.
If $\varphi(a_i)=\bfa^{\bfn_{i}}\beta^m\alpha^w$, then $\alpha a_i\alpha^{-1}=a_i^{-1} \Rightarrow m=w=0$. Thus {$\varphi(a_i)=\bfa^{\bfn_{i}}$}. In particular, we have shown that the subgroup $\langle a_1,a_2,\alpha\rangle\subset\Pi_4(\bfk)$ is characteristic and hence $\varphi$ induces an {automorphism} on the quotient group $\Pi_4(\bfk)/\langle a_1,a_2,\alpha\rangle\cong\bbz$. This implies that $\varphi(\beta)=\bfa^{\bfp}\beta^{\pm1}\alpha^w$ and thus $\varphi(\beta^2)=(\bfa^{\bfp}\beta^{\pm1}\alpha^w)^2=\bfa^*\beta^{\pm2}$.

Consequently, we have shown that $\GammaA$ is a characteristic subgroup of $\Pi_4(\bfk)$.
\end{proof}


\begin{theorem}
The SC-groups $\Pi_4(\bfk)$ have the $R_\infty$ property.
\end{theorem}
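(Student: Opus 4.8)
The plan is to mimic the strategy used for $\Pi_3(\bfk,\bfk')$ and $\Pi_6(\bfk,\bfk')$: pass to the characteristic subgroup $\GammaA=\langle a_1,a_2,\beta^2\rangle$, apply Theorem~\ref{HLP}, and show that the averaging sum is infinite because at least one of the relevant twists $\tau_\alpha\varphi'$ fails to be of type (II) with determinant $-1$. Concretely, let $\varphi:\Pi_4(\bfk)\to\Pi_4(\bfk)$ be an automorphism, with $\varphi'=\varphi|_{\GammaA}$. The holonomy quotient $\Phi_4=\Pi_4(\bfk)/\GammaA$ has order $4$ (generated by the images of $\alpha$ and $\beta$, with $\beta^2\in\GammaA$), so Theorem~\ref{HLP} gives
$$
R(\varphi)\ge\frac{1}{4}\left(R(\varphi')+R(\tau_\alpha\varphi')+R(\tau_\beta\varphi')+R(\tau_{\alpha\beta}\varphi')\right).
$$
By Theorem~\ref{Pi_1-R}, each summand is finite only if the corresponding homomorphism is of type (II) with determinant $-1$; so it suffices to exhibit, for every automorphism $\varphi$, one coset representative $\alpha_0$ among $\{1,\alpha,\beta,\alpha\beta\}$ for which $\tau_{\alpha_0}\varphi'$ is \emph{not} of type (II) with $\det=-1$.

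First I would record, as in the lemma just proved, that $\varphi(\beta)=\bfa^{\bfp}\beta^{\pm1}\alpha^w$, so on the infinite-cyclic quotient $\Pi_4(\bfk)/\langle a_1,a_2,\alpha\rangle$ the automorphism $\varphi$ acts by $\pm1$ on the generator $\beta$; equivalently $\varphi(\beta^2)=\bfa^* \beta^{\pm 2}$, i.e.\ $\varphi'$ is of type (I) when the sign is $+$ and of type (II) when the sign is $-$ (recalling that $t:=\beta^2$ is the natural "translation'' generator of $\GammaA$ here). If $\varphi'$ is of type (I) or type (III), or of type (II) with $\det\varphi'=1$, then $R(\varphi')=\infty$ already and we are done. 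So assume $\varphi'$ is of type (II) with $\det\varphi'=-1$. Then conjugation by $\alpha$ acts on $\GammaA$: since $\alpha a_i\alpha^{-1}=a_i^{-1}$ and $\alpha\beta^2\alpha^{-1}=(\bfa^\bfk\beta)^2 = \bfa^*\beta^2$, we get $[\tau_\alpha]=-I$ on $\langle a_1,a_2\rangle$ and $\tau_\alpha$ fixes the $\beta^2$-direction up to a translation. Hence $\tau_\alpha\varphi'(\beta^2)=\alpha\varphi(\beta^2)\alpha^{-1}=\alpha(\bfa^*\beta^{-2})\alpha^{-1}=\bfa^*\beta^{2}$, so $\tau_\alpha\varphi'$ is of type (I), giving $R(\tau_\alpha\varphi')=\infty$. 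Therefore the averaging sum diverges and $R(\varphi)=\infty$.

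The only genuine point to verify carefully is that conjugation by $\alpha$ really sends the $\beta^2$-generator to itself times an element of $\langle a_1,a_2\rangle$ — i.e.\ that $\alpha\beta^2\alpha^{-1}$ and $\beta^2$ differ by a pure translation, not by $\beta^{-2}$. This follows from the relation $\alpha\beta\alpha^{-1}=\bfa^{\bfk}\beta$ (so $\alpha\beta^2\alpha^{-1}=\bfa^{\bfk}\beta\bfa^{\bfk}\beta = \bfa^{\bfk}\bfa^{N\bfk}\beta^2 = \bfa^{(I+N)\bfk}\beta^2$), which also shows the sign on $\beta^{\pm 2}$ in $\tau_\alpha\varphi'$ is inherited unchanged from that in $\varphi'$; flipping the type from (II) to (I) under $\tau_\alpha$ then comes entirely from the inversion of the $\beta$-exponent in $\varphi(\beta)=\bfa^\bfp\beta^{-1}\alpha^w$. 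This is the main (mild) obstacle; everything else is routine bookkeeping with the presentation. I would then conclude by invoking Theorem~\ref{HLP} as above.

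\begin{proof}
Let $\varphi:\Pi_4(\bfk)\to\Pi_4(\bfk)$ be an automorphism and write $\varphi'=\varphi|_{\GammaA}$, where $\GammaA=\langle a_1,a_2,\beta^2\rangle$ is the characteristic subgroup of the preceding lemma, with quotient $\Phi_4=\Pi_4(\bfk)/\GammaA$ of order $4$ represented by $\{1,\alpha,\beta,\alpha\beta\}$. By Theorem~\ref{HLP},
$$
R(\varphi)\ge\frac{1}{4}\left(R(\varphi')+R(\tau_\alpha\varphi')+R(\tau_\beta\varphi')+R(\tau_{\alpha\beta}\varphi')\right).
$$
Write $t:=\beta^2$, the natural translation generator of $\GammaA\cong\GammaA$. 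From the lemma, $\varphi(\beta)=\bfa^{\bfp}\beta^{\pm1}\alpha^w$, whence $\varphi(t)=\varphi(\beta^2)=\bfa^*t^{\pm1}$; so $\varphi'$ is of type (I) when the sign is $+$ and of type (II) when the sign is $-$. If $\varphi'$ is of type (I), of type (III), or of type (II) with $\det\varphi'=1$, then $R(\varphi')=\infty$ by Theorem~\ref{Pi_1-R}, hence $R(\varphi)=\infty$.

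Assume then that $\varphi'$ is of type (II) with $\det\varphi'=-1$, so $\varphi(\beta)=\bfa^{\bfp}\beta^{-1}\alpha^w$ and $\varphi(t)=\bfa^*t^{-1}$. Using $\alpha\beta\alpha^{-1}=\bfa^{\bfk}\beta$ and $\beta a_i\beta^{-1}=N(a_i)$ we compute $\alpha\beta^2\alpha^{-1}=\bfa^{\bfk}\beta\bfa^{\bfk}\beta=\bfa^{(I+N)\bfk}\beta^2=\bfa^{(I+N)\bfk}t$. Hence
$$
\tau_\alpha\varphi'(t)=\alpha\,\varphi(t)\,\alpha^{-1}=\alpha(\bfa^{*}t^{-1})\alpha^{-1}=\bfa^{*}t,
$$
so $\tau_\alpha\varphi'$ is of type (I) and therefore $R(\tau_\alpha\varphi')=\infty$ by Theorem~\ref{Pi_1-R}. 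In either case the averaging sum above is infinite, so $R(\varphi)=\infty$. Since $\varphi$ was arbitrary, $\Pi_4(\bfk)$ has the $R_\infty$ property.
\end{proof}
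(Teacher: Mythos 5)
Your overall strategy --- pass to the characteristic subgroup $\GammaA=\langle a_1,a_2,\beta^2\rangle$, average over the coset representatives $\{1,\alpha,\beta,\alpha\beta\}$ via Theorem~\ref{HLP}, and exhibit one twisted restriction with infinite Reidemeister number in the only potentially finite case (type (II), $\det\varphi'=-1$) --- is exactly the paper's. But your key step is false. From $\alpha\beta\alpha^{-1}=\bfa^{\bfk}\beta$ you correctly derive $\alpha\beta^{2}\alpha^{-1}=\bfa^{(I+N)\bfk}\beta^{2}$, i.e.\ conjugation by $\alpha$ preserves the sign of the $\beta^{2}$-exponent; taking inverses gives $\alpha\beta^{-2}\alpha^{-1}=\bfa^{*}\beta^{-2}$, \emph{not} $\bfa^{*}\beta^{2}$. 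Hence $\tau_\alpha\varphi'(\beta^2)=\alpha(\bfa^{*}\beta^{-2})\alpha^{-1}=\bfa^{*}\beta^{-2}$, so $\tau_\alpha\varphi'$ is still of type (II) --- your own displayed relation already contradicts the claimed flip. Worse, $[\tau_\alpha\varphi']=(-I)[\varphi']$, so $\det\tau_\alpha\varphi'=\det\varphi'$, and in the critical case $R(\tau_\alpha\varphi')=4$ is finite: your argument produces no infinite summand at all. You have transplanted the $\Pi_3/\Pi_6$ mechanism, where the holonomy element genuinely inverts the translation ($\beta t\beta^{-1}=\bfa^{\bfk'}t^{-1}$); in $\Pi_4(\bfk)$ the relation $\alpha\beta\alpha^{-1}=\bfa^{\bfk}\beta$ does not invert it, so no conjugation changes the type here.

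The actual source of divergence in $\Pi_4(\bfk)$ is the determinant, via conjugation by $\beta$: since $\tau_\beta\varphi'(a_i)=\bfa^{N\bfn_i}$, one has $[\tau_\beta\varphi']=N[\varphi']$, and because $\det N=-1$ while the type is unchanged, $\det\tau_\beta\varphi'=-\det\varphi'$ (similarly for $\tau_{\alpha\beta}\varphi'$). Thus if $\varphi'$ is of type (II) with $\det\varphi'=-1$, then $\tau_\beta\varphi'$ is of type (II) with determinant $+1$, so $R(\tau_\beta\varphi')=\infty$ by Theorem~\ref{Pi_1-R}, and the averaging inequality forces $R(\varphi)=\infty$. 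That is the paper's proof; as written, your proposal has a genuine gap at the type-flip step and does not establish the theorem.
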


\begin{proof}
For any automorphism $\varphi$ on $\Pi_4(\bfk)$, by writing $\varphi'=\varphi|_{\GammaA}$, we have
$$
R(\varphi)\ge\frac{1}{4}\left(R(\varphi')+R(\tau_\alpha\varphi')+R(\tau_\beta\varphi')
+R(\tau_{\alpha\beta}\varphi')\right).
$$
Observe further that
\begin{align*}
&\tau_\alpha\varphi'(a_i)=\bfa^{-\bfn_{i}},\
\tau_\alpha\varphi'(\beta^2)=\bfa^*(\beta^{2})^{\pm1},\\
&\tau_\beta\varphi'(a_i)=\bfa^{N\bfn_i},\
\tau_\beta\varphi'(\beta^2)=\bfa^*(\beta^{2})^{\pm1},\\
&\tau_{\alpha\beta}\varphi'(a_i)=\bfa^{-N\bfn_{i}},\
\tau_{\alpha\beta}\varphi'(\beta^2)=\bfa^*(\beta^{2})^{\pm1}.
\end{align*}
Hence $\varphi', \tau_\alpha\varphi', \tau_\beta\varphi'$ and $\tau_{\alpha\beta}\varphi'$ have the same types and $\det\varphi'=\det\tau_\alpha\varphi'
=-\det\tau_\beta\varphi'=-\det\tau_{\alpha\beta}\varphi'$ because $\det N=-1$.
This implies from Theorem~\ref{GA} that when $\varphi'$ is of type (II),
$$R(\varphi')=4 \Leftrightarrow \det\varphi'=-1 \Leftrightarrow \det\tau_\beta\varphi'=1
\Leftrightarrow R(\tau_\beta\varphi')=\infty.$$
Consequently, the theorem is proved.
\end{proof}
\bigskip

\section{The SC-groups $\Pi_5(\bfm,\bfk,\bfk',\bfn)$}

Recall that
$$
\Pi_5(\bfm,\bfk,\bfk',\bfn) =\left\langle{ a_1, a_2, t, \alpha, \beta \ \Big|
\begin{array}{l}
[a_1,a_2]=1, \ ta_it^{-1}=A(a_i),\\
\alpha a_i\alpha^{-1}=a_i^{-1},\ \beta a_i\beta^{-1}=M(a_i),\\
\alpha^2=1,\ \beta^2=\bfa^{\bfk},\ [\alpha,\beta]=\bfa^{\bfn}\\
\alpha t\alpha^{-1}=\bfa^{\bfm}t,\
\beta t\beta^{-1}=\bfa^{\bfk'}t^{-1}
\end{array}}\right\rangle,
$$
where $M$ is traceless with determinant $-1$ and $MAM^{-1}=A^{-1}$, and
\begin{align*}
&(\bfk,\bfn+\bfk,\bfk'-\bfk,\bfm-\bfn+M(\bfk'-\bfk))\\
&\quad\in \frac{\ker(I-M)}{\im(I+M)}\oplus\frac{\ker(I+M)}{\im(I-M)}\\
&\qquad\oplus\frac{\ker(I-A^{-1}M)}{\eta\x\im(I+MA)}\oplus\frac{\ker(A^{-1}+M)}{\im(A-M)}.
\end{align*}
Here $\eta=1$ or $2$, and $\eta=2$ if and only if $A$ and $M$ can be conjugated simultaneously to
$$
\left[\begin{matrix}\ell'_{11}&\ell'_{12}\\\ell'_{21}&\ell'_{22}\end{matrix}\right]\ \text{ and }\
\left[\begin{matrix}-1&1\\\hspace{8pt}0&1\end{matrix}\right]
$$
so that both $\tfrac{\ell'_{21}}{\gcd(\ell'_{22}-1,\ell'_{21})}$ and $\tfrac{\ell'_{21}}{\gcd(\ell'_{22}+1,\ell'_{21})}$ are even.

\begin{lemma}
The subgroups $\langle a_1,a_2,t^2\rangle\subset\langle a_1,a_2,t,\alpha\rangle$ of $\Pi_5(\bfm,\bfk,\bfk',\bfn)$ are characteristic subgroups.
\end{lemma}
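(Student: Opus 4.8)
The plan is to show that any automorphism $\varphi$ of $\Pi_5 = \Pi_5(\bfm,\bfk,\bfk',\bfn)$ must carry $\langle a_1,a_2,t,\alpha\rangle$ to itself and, inside that, must carry $\langle a_1,a_2,t^2\rangle$ to itself. First I would record the normal form: every element is $\bfa^\bfx t^z\alpha^u\beta^w$ with $u,w\in\{0,1\}$, and identify the torsion structure. Since $\alpha^2 = 1$ and $\alpha$ is (up to conjugacy by elements of $\GammaA$) the unique "type" of order-$2$ element of its kind, $\varphi(\alpha)$ must again be a torsion element, which forces $\varphi(\alpha) = \bfa^\bfx t^z\alpha\beta^w$ for suitable $\bfx,z,w$; I would then use the relation $\alpha a_i\alpha^{-1} = a_i^{-1}$ together with the images $\varphi(a_i)$ to cut down the possibilities. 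As in the proofs of Lemmas~\ref{P3} and the $\Pi_4$ lemma, the relations $[a_1,a_2]=1$ and $ta_it^{-1}=A(a_i)$ limit $\varphi(a_i)$ to finitely many shapes ($\bfa^{\bfn_i}t^{z_i}$, or involving $\alpha$ or $\beta$); then invoking Remark~\ref{FI}, the fully invariant finite-index lattice $\Lambda\subset\GammaA$ and the fully invariant subgroup $\langle\bfa^{\bfm_1},\bfa^{\bfm_2}\rangle$ of $\Lambda$ force $\varphi(a_i)=\bfa^{\bfn_i}$, so $\langle a_1,a_2\rangle$ is characteristic and $[\varphi']:=[\bfn_1\ \bfn_2]\in\GL(2,\bbz)$.

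Next I would pass to the quotient $\Pi_5/\langle a_1,a_2\rangle$, which is a finite extension of $\bbz$ (generated by the images of $t,\alpha,\beta$, with $t$ of infinite order and $\alpha,\beta$ torsion-related). Because $\varphi$ descends to an automorphism of this quotient, and because the image of $t$ generates the unique maximal torsion-free-by-finite piece in the appropriate sense, I would argue that $\varphi(t) \equiv \bfa^\bfp t^{\pm1}$ modulo the subgroup generated by $\alpha,\beta$; combined with the earlier analysis of $\varphi(\alpha)$ this pins down that $\varphi$ preserves $\langle a_1,a_2,t,\alpha\rangle$ (the subgroup is normal of index $2$, corresponding to the "$\beta$-parity", and is characteristic because $\beta$ is distinguished from $\alpha$ and from $t^z\alpha$ by its action $\beta a_i\beta^{-1}=M(a_i)$ with $M$ traceless of determinant $-1$, versus $\alpha$'s action by $-I$). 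Once $\langle a_1,a_2,t,\alpha\rangle$ is shown characteristic, I would restrict $\varphi$ to it and repeat the quotient argument one level down: modulo $\langle a_1,a_2,\alpha\rangle$ we get $\bbz$ generated by $t$, so $\varphi(t)=\bfa^\bfp t^{\pm1}\alpha^w$, and then squaring (as in the $\Pi_4$ argument, $\varphi(t^2)=(\bfa^\bfp t^{\pm1}\alpha^w)^2 = \bfa^* t^{\pm2}$) shows $\varphi$ preserves $\langle a_1,a_2,t^2\rangle$. Hence both subgroups in the chain are characteristic.

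The main obstacle I anticipate is the bookkeeping needed to rule out the "mixed" possibilities for $\varphi(a_i)$, $\varphi(t)$, and especially $\varphi(\alpha)$: in $\Pi_5$ there are two torsion elements $\alpha$ and $\beta$ (with $\beta^2=\bfa^\bfk$ possibly nontrivial, so $\beta$ need not be torsion), and one must carefully use the distinct conjugation actions — $\alpha$ acts by $-I$, $\beta$ by the traceless $M$, and $t\alpha$, $t\beta$, etc. by their composites with powers of $A$ — to show these "flavors" cannot be interchanged by an automorphism. Concretely, a candidate image like $\varphi(a_i)=\bfa^{\bfn_i}t^{z_i}\alpha$ or $\bfa^{\bfn_i}t^{z_i}\beta$ will be shown impossible by applying $\varphi$ to $ta_it^{-1}=A(a_i)$ and comparing the $\alpha$- or $\beta$-components and the parity constraints on $\ell_{ij}$ (mirroring the case analysis in the proof of Lemma~\ref{P3}), and then invoking the fully invariant lattice $\Lambda$ to kill the remaining $t$-power. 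Everything else is the same template as the $\Pi_1,\Pi_3,\Pi_4$ lemmas; the novelty is only that the chain has length two and that two torsion generators must be disentangled.
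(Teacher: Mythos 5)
Your proposal is correct, and its skeleton coincides with the paper's: the same enumeration of possible shapes for $\varphi(a_i)$ driven by $[a_1,a_2]=1$, $ta_it^{-1}=A(a_i)$ and parity constraints on the $\ell_{ij}$, followed by Remark~\ref{FI} (the fully invariant lattice $\Lambda\subset\GammaA$ and its fully invariant sublattice $\langle\bfa^{\bfm_1},\bfa^{\bfm_2}\rangle$) to force $\varphi(a_i)=\bfa^{\bfn_i}$, and the final squaring $\varphi(t^2)=(\bfa^\bfp t^{\pm1}\alpha^w)^2=\bfa^*t^{\pm2}$. Where you genuinely diverge is the step that separates the $\beta$-coset: you propose to get characteristicity of the index-two subgroup $\langle a_1,a_2,t,\alpha\rangle$ from the conjugation action on the already-characteristic $\langle a_1,a_2\rangle\cong\bbz^2$ --- elements $\bfa^\bfx t^z\alpha^v$ act by $\pm A^z$ (determinant $+1$), elements involving $\beta$ act by $\pm A^zM$ (determinant $-1$), and writing $F$ for the matrix of $\varphi|_{\langle a_1,a_2\rangle}$ one has $C_{\varphi(g)}=FC_gF^{-1}$, so the determinant is an invariant of the class of $g$ --- and then to descend inside $\langle a_1,a_2,t,\alpha\rangle\cong\Pi_1(\bfm)$, where a Lemma~\ref{ch1}-type argument gives $\varphi(\alpha)=\bfa^\bfx\alpha$ and $\varphi(t)=\bfa^\bfp t^{\pm1}\alpha^w$. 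The paper instead pins down $\varphi(\alpha)$ directly (torsion, hence $\bfa^\bfx\alpha$ or $\bfa^\bfx t^z\alpha^v\beta$) and kills the $\beta$-case by passing to $\Pi_5/\langle a_1,a_2\rangle$ and using $\bar t\bar\alpha=\bar\alpha\bar t$ together with torsion-freeness of $\bar\varphi(\bar t)$; your determinant argument is a valid and arguably cleaner substitute, and it makes your vaguer intermediate claim that $\varphi(t)\equiv\bfa^\bfp t^{\pm1}$ ``modulo $\langle\alpha,\beta\rangle$'' unnecessary (note $\langle\bar\alpha,\bar\beta\rangle$ is not normal in $\Pi_5/\langle a_1,a_2\rangle$, so that phrasing should be replaced by your second-stage quotient by $\langle a_1,a_2,\alpha\rangle$, which is legitimate once $\varphi(\alpha)=\bfa^\bfx\alpha$ is known). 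Two small points to repair in a write-up: the list of candidate torsion images of $\alpha$ must also include $\bfa^\bfx t^z\beta$ (no $\alpha$-factor), which your determinant/relation argument excludes anyway; and in the surviving possibility $\varphi(a_i)=\bfa^{\bfn_i}t^{z_i}\alpha^{v_i}$ the fully invariant lattice only kills the $t$-power, so you need the paper's order argument (an $\alpha$-factor would make $\varphi(a_i)$ of order $2$, impossible for the image of the infinite-order element $a_i$) to remove $\alpha^{v_i}$.
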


\begin{proof}
Write $\Pi_5=\Pi_5(\bfm,\bfk,\bfk',\bfn)$. Let $\varphi:\Pi_5\to\Pi_5$ be an automorphism. Every element of $\Pi_5$ can be written uniquely as the form of $\bfa^\bfx t^z\alpha^v\beta^w$ with $v,w\in\{0,1\}$.

Because $\alpha^2=1$, it follows that $\varphi(\alpha)=\bfa^{\bfx}\alpha$ or $\bfa^\bfx t^z\alpha^v\beta$.
Because $[a_1,a_2]=1$ and using the fact that the elements of the form $\bfa^\bfx\alpha$ are torsion elements of order $2$, we can derive the following possibilities:
\begin{enumerate}
\item $\varphi(a_i)=\bfa^{\bfn_{i}}t^{z_i}\alpha^{v_i}$,
\item $\varphi(a_i)=\bfa^{\bfn_{i}}t^{z}\alpha^{v_i}\beta$,
\item $\varphi(a_1)=\bfa^{\bfn_{1}}, \varphi(a_2)=\bfa^{\bfn_2}t^{z}\alpha^{v_2}\beta$,
\item $\varphi(a_1)=\bfa^{\bfn_{1}}t^{z}\alpha^{v_1}\beta, \varphi(a_2)=\bfa^{\bfn_2}$.
\end{enumerate}
Observe that $(\bfa^\bfx t^z\alpha^v\beta)^\text{even}=\bfa^*$ and $(\bfa^\bfx t^z\alpha^v\beta)^\text{odd}=\bfa^*t^z\alpha^v\beta$.

Consider the possibility (2) $\varphi(a_i)=\bfa^{\bfn_{i}}t^{z}\alpha^{v_i}\beta$. Assume $\varphi(t)=\bfa^\bfx t^u\alpha^v\beta^w$. Then
$ta_it^{-1}=A(a_i) \Rightarrow \ell_{1i}+\ell_{2i} $ is odd and so
$\bfa^*t^{2u+(-1)^wz}\alpha^v\beta=\bfa^*t^z\alpha^{v_{i'}}\beta$, respectively.
Hence $\varphi(t)$ is $\bfa^\bfx\alpha^{v}$ or $\bfa^\bfx t^z\alpha^{v}\beta$. By Remark~\ref{FI}, $\Pi_5$ has a fully invariant subgroup $\Lambda\subset\GammaA$ and hence $\varphi$ induces an automorphism on the group $\Lambda/\Lambda\cap\langle a_1,a_2\rangle\cong\bbz$. This rules out the case $\varphi(t)=\bfa^\bfx\alpha^{v}$. Furthermore, the above observation rules out the other case $\varphi(t)=\bfa^\bfx t^z\alpha^{v}\beta$.

Consider the possibility (3)
$\varphi(a_1)=\bfa^{\bfn_{1}}, \varphi(a_2)=\bfa^{\bfn_2}t^{z}\alpha^{v_2}\beta$.
Assume $\varphi(t)=\bfa^\bfx t^u\alpha^v\beta^w$. Then
\begin{align*}
ta_1t^{-1}=A(a_1)&\Rightarrow \ell_{21} \text{ is even and so both $\ell_{11}$ and $\ell_{22}$ are odd},\\
ta_2t^{-1}=A(a_2)&\Rightarrow 2u+(-1)^wz=z
\text{ (since $\ell_{22}$ is odd).}
\end{align*}
Hence $\varphi(t)$ is of the form $\bfa^\bfx\alpha^v$ or
$\bfa^\bfx t^{z}\alpha^v\beta$.
As above, any case cannot occur. Similarly, the possibility (4) cannot occur.

Therefore, we have the only possibility (1) $\varphi(a_i)=\bfa^{\bfn_{i}}t^{z_i}\alpha^{v_i}$. From  Remark~\ref{FI}, since $\langle \bfa^{\bfm_1}, \bfa^{\bfm_2}\rangle$ is a finite index subgroup of $\langle a_1,a_2\rangle\cong\bbz^2$, we have $\det[\bfm_1\ \bfm_2]\ne0$. Furthermore,  $\langle \bfa^{\bfm_1}, \bfa^{\bfm_2}\rangle$ is fully invariant, and so $\varphi(\bfa^{\bfm_i})\in \langle \bfa^{\bfm_1}, \bfa^{\bfm_2}\rangle$. Since
\begin{align*}
\varphi(\bfa^{\bfm_i})&=\varphi(a_1)^{m_{1i}}\varphi(a_2)^{m_{2i}}
=(\bfa^{\bfn_1}t^{z_1}\alpha^{v_1})^{m_{1i}}(\bfa^{\bfn_{2}}t^{z_2}\alpha^{v_2})^{m_{2i}}\\
&=\bfa^* t^{m_{1i}z_1+m_{2i}z_2}\alpha^{m_{1i}v_1+m_{2i}v_2},
\end{align*}
we have $m_{1i}z_1+m_{2i}z_2=0$ and $m_{1i}v_1+m_{2i}v_2$ is even. Because the matrix $[\bfm_1\ \bfm_2]$ is nonsigular, $z_1=z_2=0$ and hence $\varphi(a_i)=\bfa^{\bfn_{i}}\alpha^{v_i}$. If $v_i=1$ then $\bfa^{\bfn_{i}}\alpha^{v_i}$ is of order $2$. This shows that $\varphi(a_i)=\bfa^{\bfn_{i}}$.

  This shows that the subgroup $\langle a_1,a_2\rangle$ is a characteristic subgroup of $\Pi_5$, so $\varphi$ induces an automorphism $\bar\varphi$ on the quotient group $\Pi_5/\langle a_1,a_2\rangle$, which is isomorphic to
$$
\langle\bar{t},\bar\alpha,\bar\beta\mid \bar\alpha^2=\bar\beta^2=1, [\bar\alpha,\bar\beta]=1, \bar\alpha\bar{t}\bar\alpha^{-1}=\bar{t}, \bar\beta\bar{t}\bar\beta^{-1}=\bar{t}^{-1}\rangle.
$$
Recall that $\varphi(\alpha)=\bfa^\bfx\alpha$ or $\bfa^\bfx t^z\alpha^v\beta$. Consider the case {$\varphi(\alpha)=\bfa^\bfx\alpha$}. Then $\bar\varphi(\bar\alpha)=\bar\alpha$. Since $\bar{t}$ is a torsion-free element, $\bar\varphi(\bar{t})$ is a torsion-free element, say $\bar{t}^m\bar\alpha^v$. Because $\bar\varphi$ is an automorphism fixing $\bar\alpha$, we must have $\bar\varphi(\bar{t})=\bar{t}^{\pm1}\bar\alpha^v$ and $\bar\varphi(\bar\beta)=\bar{t}^z\bar\alpha^{v'}\bar\beta$. This implies that {$\varphi(t)=\bfa^\bfp t^{\pm1}\alpha^v$} and $\varphi(\beta)=\bfa^\bfq t^z\alpha^{v'}\beta$. Consider next the case $\varphi(\alpha)=\bfa^\bfx t^z\alpha^v\beta$. So, $\bar\varphi(\bar\alpha)=\bar{t}^z\bar\alpha^v\bar\beta$. As $\bar\varphi(\bar{t})$ is of the form $\bar{t}^m\bar\alpha^{v'}$ with $m\ne0$, we have
\begin{align*}
\bar{t}\bar\alpha=\bar\alpha\bar{t}
&\Rightarrow (\bar{t}^m\bar\alpha^{v'})(\bar{t}^z\bar\alpha^v\bar\beta)
=(\bar{t}^z\bar\alpha^v\bar\beta)(\bar{t}^m\bar\alpha^{v'})\\
&\Rightarrow \bar{t}^{m+z}\bar\alpha^{v'+v}\bar\beta=\bar{t}^{z-m}\bar\alpha^{v+v'}\bar\beta
\Rightarrow m=0.
\end{align*}
Thus this case cannot occur.

In all, we have shown that $\varphi$ is of the form
\begin{align*}
&\varphi(a_i)=\bfa^{\bfn_i},\ \varphi(t)=\bfa^{\bfp}t^{\pm1}\alpha^v,\\
&\varphi(\alpha)=\bfa^\bfx\alpha, \ \varphi(\beta)=\bfa^\bfq t^z\alpha^{v'}\beta.
\end{align*}
Consequently, the subgroups $\langle a_1,a_2,t^2\rangle\subset\langle a_1,a_2,t,\alpha\rangle$ are characteristic subgroups of $\Pi_5$.
\end{proof}

\begin{theorem}
The SC-groups $\Pi_5(\bfm,\bfk,\bfk',\bfn)$ have the $R_\infty$ property.
\end{theorem}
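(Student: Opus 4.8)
The plan is to run Theorem~\ref{HLP}.(1) on the characteristic subgroup $\Lambda=\langle a_1,a_2,t^2\rangle$ furnished by the preceding Lemma. Because $[a_1,a_2]=1$ and $t^2a_it^{-2}=A^2(a_i)$, this $\Lambda$ is precisely the lattice $\Gamma_{A^2}$ of $\Sol$: indeed $A^2\in\mathrm{SL}(2,\bbz)$ is hyperbolic with $\tr A^2=(\tr A)^2-2>2$ (hence with nonzero off-diagonal entries), so it is the defining matrix of a lattice and Theorem~\ref{Pi_1-R} applies to it with $A$ replaced by $A^2$. Since $\Lambda$ is characteristic, any automorphism $\varphi$ of $\Pi_5=\Pi_5(\bfm,\bfk,\bfk',\bfn)$ restricts to an automorphism $\varphi'$ of $\Gamma_{A^2}$, and $\Lambda$ has finite index in $\Pi_5$ (the finite quotient $\Pi_5/\Lambda$ is generated by the images of $t,\alpha,\beta$). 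By Theorem~\ref{HLP}.(1) it therefore suffices, for an arbitrary $\varphi$, to produce a single coset representative $\gamma\in\Pi_5$ with $R(\tau_\gamma\varphi')=\infty$; and by Theorem~\ref{Pi_1-R} this follows as soon as $\tau_\gamma\varphi'$ is of type (I).

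First I would plug in the normal form of $\varphi$ given by the Lemma: $\varphi(a_i)=\bfa^{\bfn_i}$ and $\varphi(t)=\bfa^{\bfp}t^{\pm1}\alpha^v$. Squaring, and using $\alpha^2=1$ together with $\alpha t\alpha^{-1}=\bfa^{\bfm}t$ and $ta_it^{-1}=A(a_i)$, one gets $\varphi'(t^2)=\varphi(t^2)=\bfa^{*}t^{\pm2}=\bfa^{*}(t^2)^{\pm1}$. Hence $\varphi'$ is of type (I) exactly when the exponent is $+1$, i.e.\ when $\varphi(t)=\bfa^{\bfp}t\alpha^v$; in that case $R(\varphi')=\infty$ by Theorem~\ref{Pi_1-R}, and $R(\varphi)=\infty$ by Theorem~\ref{HLP}.(1) (take $\gamma=1$).

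This leaves the case $\varphi(t)=\bfa^{\bfp}t^{-1}\alpha^v$, where $\varphi(t^2)=\bfa^{\bfw}t^{-2}$ for some $\bfw\in\bbz^2$. Now take $\gamma=\beta$: it normalizes $\Lambda$, since $\beta a_i\beta^{-1}=M(a_i)\in\langle a_1,a_2\rangle$ and $\beta t^2\beta^{-1}=(\bfa^{\bfk'}t^{-1})^2\in\Lambda$, so $\tau_\beta\varphi'$ is a genuine automorphism of $\Gamma_{A^2}$. Using $\beta t\beta^{-1}=\bfa^{\bfk'}t^{-1}$ and $t\bfa^{\bfy}t^{-1}=\bfa^{A\bfy}$,
$$
\tau_\beta\varphi'(t^2)=\beta\,\bfa^{\bfw}t^{-2}\beta^{-1}=\bfa^{M\bfw}\bigl(\bfa^{\bfk'}t^{-1}\bigr)^{-2}=\bfa^{*}t^{2},
$$
so $\tau_\beta\varphi'$ is of type (I), $R(\tau_\beta\varphi')=\infty$, and again $R(\varphi)=\infty$. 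As $\varphi$ was arbitrary, $\Pi_5(\bfm,\bfk,\bfk',\bfn)$ has the $R_\infty$ property.

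I do not expect a genuine obstacle here: the only points requiring care are verifying that $\beta$ normalizes $\Lambda$ so that $\tau_\beta\varphi'$ makes sense, and the elementary observation — the real mechanism of the proof — that conjugation by $\beta$ reverses the sign of the $t^2$-exponent while conjugation by $\alpha$ preserves it, which forces at least one of $\varphi'$, $\tau_\beta\varphi'$ to be of type (I). (Alternatively, one can argue with the other characteristic subgroup $\langle a_1,a_2,t,\alpha\rangle\cong\Pi_1(\bfm)$ and Theorem~\ref{Reid1} in place of $\Gamma_{A^2}$ and Theorem~\ref{Pi_1-R}, in essentially the same way.)
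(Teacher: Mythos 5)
Your proposal is correct and follows essentially the same route as the paper: both restrict an arbitrary automorphism to the characteristic lattice $\langle a_1,a_2,t^2\rangle=\Gamma_{\!A^2}$, observe that conjugation by $\beta$ reverses the sign of the $t^2$-exponent so that one of $\varphi'$, $\tau_\beta\varphi'$ is of type (I) and hence has infinite Reidemeister number, and then conclude via the averaging theorem (the paper writes out the eight-term inequality of Theorem~\ref{HLP}.(2), while you invoke part (1) directly, which amounts to the same thing).
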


\begin{proof}
Note that the subgroup $\langle a_1,a_2,t^2\rangle$ is a lattice of $\Sol$ determined by the matrix $A^2$. We denote this group by $\Gamma_{\!A^2}$. Then $\Pi_5(\bfm,\bfk,\bfk',\bfn)/\Gamma_{\!A^2}\cong(\bbz_2)^3$. Let $\varphi$ be an automorphism on $\Pi_5(\bfm,\bfk,\bfk',\bfn)$. With $\varphi'=\varphi|_{\Gamma_{\!A^2}}$, by Theorem~\ref{HLP} we have
\begin{align*}
R(\varphi)\ge\frac{1}{8}&\left(R(\varphi')+R(\tau_\alpha\varphi')
+R(\tau_\beta\varphi')+R(\tau_{\alpha\beta}\varphi')\right.\\
+&\left.R(\tau_{t}\varphi')+R(\tau_{t\alpha}\varphi')+R(\tau_{t\beta}\varphi')
+R(\tau_{t\alpha\beta}\varphi')\right).
\end{align*}
Observe that
\begin{align*}
&\tau_\beta\varphi'(a_i)=\beta\bfa^{\bfn_i}\beta^{-1}=\bfa^{M\bfn_i},\\
&\tau_\beta\varphi'(t^2)=\beta(\bfa^\bfp t^{\pm1}\alpha^v)^2\beta^{-1}=\bfa^*t^{\mp2}.
\end{align*}
This shows that
$\varphi'$ is of type (II) if and only if $\tau_\beta\varphi'$ is of type (I).
Consequently, the theorem follows from Theorem~\ref{GA}.
\end{proof}
\bigskip

\section{The SC-groups $\Pi_6(\bfk,\bfk')$}

Recall that
$$
\Pi_6(\bfk,\bfk')=\left\langle{a_1, a_2, \alpha, \beta \ \Big|
\begin{array}{l}
[a_1,a_2]=1,\\
\alpha a_i\alpha^{-1}=N(a_i),\
\beta a_i\beta^{-1}=M(a_i),\\
\beta^2=\bfa^{\bfk},\
\beta \alpha\beta^{-1}=\bfa^{\bfk'}\alpha^{-1}
\end{array}}\right\rangle,
$$
where $A$ has a square root $N$
\begin{align*}
N=-
\left[\begin{matrix}\tfrac{\ell_{11}+1}{\sqrt{\ell_{11}+\ell_{22}+2}}
&\tfrac{\ell_{12}}{\sqrt{\ell_{11}+\ell_{22}+2}}\\\tfrac{\ell_{21}}{\sqrt{\ell_{11}+\ell_{22}+2}}
&\tfrac{\ell_{22}+1}{\sqrt{\ell_{11}+\ell_{22}+2}}\end{matrix}\right],
\end{align*}
and $M$ is traceless with determinant $-1$ and $MAM^{-1}=A^{-1}$, and
\begin{align*}
(\bfk,\bfk'-\bfk)\in \frac{\ker(I-M)}{\im(I+M)}\oplus\frac{\ker(N-M)}{\im(N^{-1}+M)}.
\end{align*}

\begin{lemma}
The subgroup $\GammaA=\langle a_1,a_2,\alpha^2\rangle$ of $\Pi_6(\bfk,\bfk')$ is a characteristic subgroup.
\end{lemma}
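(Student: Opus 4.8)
The plan is to mirror the two–step reduction already used for $\Pi_3(\bfk,\bfk')$ and $\Pi_5(\bfm,\bfk,\bfk',\bfn)$. Write $\Pi_6=\Pi_6(\bfk,\bfk')$ and let $\varphi:\Pi_6\to\Pi_6$ be an automorphism. First I would show that $\langle a_1,a_2\rangle\cong\bbz^2$ is characteristic in $\Pi_6$; then $\varphi$ descends to the quotient $\Pi_6/\langle a_1,a_2\rangle$, and I would identify $\GammaA$ with the preimage of a characteristic subgroup of that quotient.

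For the first step, $\langle a_1,a_2\rangle$ is normal (both $\alpha a_i\alpha\inv=N(a_i)$ and $\beta a_i\beta\inv=M(a_i)$ lie in it), so $\varphi(\langle a_1,a_2\rangle)$ is again a free abelian subgroup of rank $2$; it therefore suffices to show that $\langle a_1,a_2\rangle$ contains every rank-$2$ free abelian subgroup of $\Pi_6$. Every element of $\Pi_6$ has the form $\bfa^{\bfx}\alpha^z\beta^w$ with $w\in\{0,1\}$, because $\beta^2=\bfa^{\bfk}\in\langle a_1,a_2\rangle$ and $\beta\alpha\beta\inv=\bfa^{\bfk'}\alpha\inv$, and conjugation by such an element acts on $\langle a_1,a_2\rangle$ through $N^z$ if $w=0$ and through $N^zM$ if $w=1$. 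Since $\det N=1$ and $N$ is hyperbolic (recall $\tr N=-\sqrt{\tr A+2}<-2$) with $N^2=A$, the matrix $N^z$ has no eigenvalue $1$ for $z\neq0$; and since $M$ is traceless with $\det M=-1$ and $MAM\inv=A\inv$, the matrix $N^zM$ is traceless of determinant $-1$, so $(N^zM)^2=I$ while $N^zM\neq I$. Using these facts and the dihedral multiplication of the cosets $\bfa^{*}\alpha^z\beta^w$, a direct computation shows that whenever $(z,w)\neq(0,0)$ the centralizer of $\bfa^{\bfx}\alpha^z\beta^w$ in $\Pi_6$ has torsion-free rank at most $1$ — for $w=0$ because $N^z-I$ is invertible over $\bbq$, and for $w=1$ because $\im(I+N^zM)$ lies in the rank-one lattice $\ker(I-N^zM)$, which already contains the translation $(\bfa^{\bfx}\alpha^z\beta)^2$. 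Hence every element of a rank-$2$ free abelian subgroup has $z=w=0$, and $\langle a_1,a_2\rangle$ is characteristic. (Alternatively one can copy the case analysis from the lemmas for $\Pi_3$ and $\Pi_5$ verbatim, using Remark~\ref{FI} with $t$ replaced by $\alpha^2$ and the relations $\alpha a_i\alpha\inv=N(a_i)$, $\alpha^2a_i\alpha^{-2}=A(a_i)$ to exclude an $\alpha$- or $\beta$-component from $\varphi(a_i)$.)

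For the second step, $\varphi$ induces an automorphism on $\Pi_6/\langle a_1,a_2\rangle$. Killing $a_1,a_2$ turns $\beta^2=\bfa^{\bfk}$ and $\beta\alpha\beta\inv=\bfa^{\bfk'}\alpha\inv$ into $\bar\beta^2=1$ and $\bar\beta\bar\alpha\bar\beta\inv=\bar\alpha\inv$, so the quotient is the infinite dihedral group $D_\infty=\langle\bar\alpha\rangle\rtimes\langle\bar\beta\rangle$ with $\bar\alpha$ of infinite order. Since $[\bar\beta,\bar\alpha]=\bar\alpha^{-2}$ and $D_\infty/\langle\bar\alpha^2\rangle\cong\bbz_2\x\bbz_2$ is abelian, the commutator subgroup of $D_\infty$ is $\langle\bar\alpha^2\rangle$, which is therefore characteristic in $D_\infty$. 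Under the quotient map $\bfa^{\bfx}\alpha^z\beta^w\mapsto\bar\alpha^z\bar\beta^w$, the preimage of $\langle\bar\alpha^2\rangle$ is exactly $\{\bfa^{\bfx}\alpha^z:z\text{ even}\}=\langle a_1,a_2,\alpha^2\rangle=\GammaA$. Therefore $\varphi(\GammaA)=\GammaA$, as claimed.

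I expect the first step to be the main obstacle. For $\Pi_4(\bfk)$ a one-line torsion argument pins down $\varphi(a_i)$, but here $\alpha$ is a half-period ($\alpha^2$ acts by $A=N^2$), so $\alpha$ and the various $\bfa^{*}\alpha^z\beta^w$ all have infinite order; the torsion shortcut is unavailable, and one genuinely has to compute centralizers (or run the Remark~\ref{FI} case analysis) to rule out an $\alpha$- or $\beta$-component in $\varphi(a_i)$. Once that is settled, the remaining steps are formal.
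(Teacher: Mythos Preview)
Your proof is correct, and the second step (passing to $\Pi_6/\langle a_1,a_2\rangle\cong D_\infty$ and identifying $\langle\bar\alpha^2\rangle$ as the characteristic commutator subgroup) matches the paper's argument exactly.

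Where you differ is the first step. The paper establishes $\varphi(a_i)=\bfa^{\bfn_i}$ by the same four-case analysis used for $\Pi_3$ and $\Pi_5$: writing $\varphi(a_i)=\bfa^{\bfn_i}\alpha^{z_i}\beta^{w_i}$, using the relation $\alpha a_i\alpha^{-1}=N(a_i)$ together with the observation $(\bfa^{\bfx}\alpha^z\beta)^{\text{even}}=\bfa^{*}$ to force $\varphi(\alpha)$ into finitely many shapes, and then invoking Remark~\ref{FI} (the fully invariant lattice $\Lambda\subset\GammaA$) to rule those shapes out. Your main argument instead characterizes $\langle a_1,a_2\rangle$ intrinsically as the unique rank-$2$ free abelian subgroup, via the centralizer computation: for $(z,w)\neq(0,0)$ the action on $\langle a_1,a_2\rangle$ is through $N^z$ (hyperbolic, so $N^z-I$ is invertible over $\bbq$) or through the reflection $N^zM$ (so $\ker(I-N^zM)$ has rank~$1$), and in either case the centralizer has Hirsch rank $\le1$. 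This is more conceptual and avoids both the case split and the appeal to Remark~\ref{FI}; the paper's approach, by contrast, keeps all the lemmas in the section parallel and yields the explicit form of $\varphi(\alpha),\varphi(\beta)$ as a by-product. One small caveat: your closing remark that ``the various $\bfa^{*}\alpha^z\beta^w$ all have infinite order'' is not literally true when $\Pi_6(\bfk,\bfk')$ is not an SB-group, since $(\bfa^{\bfx}\alpha^z\beta)^2\in\langle a_1,a_2\rangle$ can be trivial; but this is only in your commentary, and your centralizer argument does not depend on it.
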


\begin{proof}
Every element of $\Pi_6(\bfk,\bfk')$ is of the form $\bfa^\bfx\alpha^z\beta^w$ with $w=0$ or $1$. Let $\varphi:\Pi_6(\bfk,\bfk')\to\Pi_6(\bfk,\bfk')$ be an automorphism. Because $[a_1,a_2]=1$, there are the following $4$ possibilities:
\begin{enumerate}
\item $\varphi(a_i)=\bfa^{\bfn_{i}}\alpha^{z_i}$,
\item $\varphi(a_i)=\bfa^{\bfn_{i}}\alpha^{z}\beta$,
\item $\varphi(a_1)=\bfa^{\bfn_{1}}, \varphi(a_2)=\bfa^{\bfn_2}\alpha^{z}\beta$,
\item $\varphi(a_1)=\bfa^{\bfn_{1}}\alpha^{z}\beta, \varphi(a_2)=\bfa^{\bfn_2}$.
\end{enumerate}
Observe that $(\bfa^\bfx\alpha^z\beta)^\text{even}=\bfa^*$ and $(\bfa^\bfx\alpha^z\beta)^\text{odd}=\bfa^*\alpha^z\beta$.

Consider the possibility (2) $\varphi(a_i)=\bfa^{\bfn_{i}}\alpha^{z}\beta$. Let $\varphi(\alpha)=\bfa^\bfx \alpha^u\beta^w$. Then
$\alpha a_i\alpha^{-1}=N(a_i)$ induces that
$\ell_{1i}'+\ell_{2i}'$ is odd
where $N=(\ell_{ij}')$, and  so $\bfa^*\alpha^{2u+(-1)^wz}\beta=\bfa^*\alpha^z\beta$.
Hence $\varphi(\alpha)$ is $\bfa^\bfx$ or $\bfa^\bfx\alpha^{z}\beta$.
By Remark~\ref{FI}, $\Pi_6(\bfk,\bfk')$ has a fully invariant subgroup $\Lambda\subset\GammaA$ and hence $\varphi$ induces an automorphism on the group $\Lambda/\Lambda\cap\langle a_1,a_2\rangle\cong\bbz$. This rules out the case $\varphi(\alpha)=\bfa^\bfx$. Furthermore, the above observation rules out the other case $\varphi(\alpha)=\bfa^\bfx\alpha^z\beta$.

Consider the possibility (3)
$\varphi(a_1)=\bfa^{\bfn_{1}}, \varphi(a_2)=\bfa^{\bfn_2}\alpha^{z}\beta$.
Let $\varphi(\alpha)=\bfa^\bfx \alpha^u\beta^w$. Then
\begin{align*}
\alpha a_1\alpha^{-1}=N(a_1)&\Rightarrow \ell_{21}' \text{ is even and so both $\ell_{11}'$ and $\ell_{22}'$ are odd},\\
\alpha a_2\alpha^{-1}=N(a_2)&\Rightarrow 2u+(-1)^wz_2=z_2 \text{ (since $\ell_{22}'$ is odd).}
\end{align*}
Hence $\varphi(\alpha)$ is of the form $\bfa^\bfx$ or $\bfa^\bfx\alpha^{z_2}\beta$. As above, any case cannot occur. Similarly, the possibility (4) cannot occur.

Therefore, we have the only possibility (1) $\varphi(a_i)=\bfa^{\bfn_{i}}\alpha^{z_i}$. From  Remark~\ref{FI}, since $\langle \bfa^{\bfm_1}, \bfa^{\bfm_2}\rangle$ is a finite index subgroup of $\langle a_1,a_2\rangle\cong\bbz^2$, we have $\det[\bfm_1\ \bfm_2]\ne0$. Furthermore,  $\langle \bfa^{\bfm_1}, \bfa^{\bfm_2}\rangle$ is fully invariant, and so $\varphi(\bfa^{\bfm_i})\in \langle \bfa^{\bfm_1}, \bfa^{\bfm_2}\rangle$. Since
\begin{align*}
\varphi(\bfa^{\bfm_i})&=\varphi(a_1)^{m_{1i}}\varphi(a_2)^{m_{2i}}
=(\bfa^{\bfn_1}\alpha^{z_1})^{m_{1i}}(\bfa^{\bfn_{2}}\alpha^{z_2})^{m_{2i}}
=\bfa^*\alpha^{m_{1i}z_1+m_{2i}z_2},
\end{align*}
we have $m_{1i}z_1+m_{2i}z_2=0$. As the matrix $[\bfm_1\ \bfm_2]$ is nonsigular, $z_1=z_2=0$ and hence $\varphi(a_i)=\bfa^{\bfn_{i}}$.
This shows that the subgroup $\langle a_1,a_2\rangle$ is a characteristic subgroup of $\Pi_6(\bfk,\bfk')$, so $\varphi$ induces an automorphism $\bar\varphi$ on the quotient group $\Pi_6(\bfk,\bfk')/\langle a_1,a_2\rangle\cong\bbz\rtimes\bbz_2$, with generators $\bar\alpha,\bar\beta$. Hence we must have $\bar\varphi(\bar\alpha)=\bar\alpha^{\pm1}$ and $\bar\varphi(\bar\beta)=\bar\alpha^{v'}\bar\beta$, or $\varphi(\alpha)=\bfa^\bfp\alpha^{\pm1}$ and $\varphi(\beta)=\bfa^\bfq\alpha^{v'}\beta$.

Consequently, the subgroups $\GammaA=\langle a_1,a_2,\alpha^2\rangle\subset\langle a_1,a_2,\alpha\rangle$ are characteristic subgroups of $\Pi_6(\bfk,\bfk')$.
\end{proof}

\begin{theorem}
The SC-groups $\Pi_6(\bfk,\bfk')$ have the $R_\infty$ property.
\end{theorem}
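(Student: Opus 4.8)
The plan is to follow the template already used for $\Pi_3(\bfk,\bfk')$, $\Pi_4(\bfk)$ and $\Pi_5(\bfm,\bfk,\bfk',\bfn)$, applying Theorem~\ref{HLP} to the characteristic lattice $\GammaA=\langle a_1,a_2,\alpha^2\rangle$ furnished by the preceding lemma. Since $N^2=A$, the generators $a_1,a_2,\alpha^2$ of $\GammaA$ satisfy $[a_1,a_2]=1$ and $\alpha^2a_i\alpha^{-2}=A(a_i)$, so $\GammaA$ is precisely the lattice $\Gamma_A$ of $\Sol$ with $\alpha^2$ in the role of $t$; in particular Theorem~\ref{GA} governs the Reidemeister numbers of automorphisms of $\GammaA$, and the notions of type~$\mathrm{(I)}$, $\mathrm{(II)}$, $\mathrm{(III)}$ make sense relative to the generator $\alpha^2$. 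Because $\beta^2=\bfa^\bfk$ and $\alpha^2$ both lie in $\GammaA$, the finite quotient $\Pi_6(\bfk,\bfk')/\GammaA$ has order $4$, generated by the commuting involutions $\bar\alpha,\bar\beta$, with coset representatives $1,\alpha,\beta,\alpha\beta$. Hence for any automorphism $\varphi$ on $\Pi_6(\bfk,\bfk')$, writing $\varphi'=\varphi|_{\GammaA}$, Theorem~\ref{HLP} gives
$$
R(\varphi)\ge\frac14\left(R(\varphi')+R(\tau_\alpha\varphi')+R(\tau_\beta\varphi')+R(\tau_{\alpha\beta}\varphi')\right).
$$

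The crux is to identify the types of the four restricted maps. By the lemma $\varphi(\alpha)=\bfa^\bfp\alpha^{\pm1}$ and $\varphi(a_i)=\bfa^{\bfn_i}$, so $\varphi'(\alpha^2)=\bfa^*(\alpha^2)^{\pm1}$ and $\varphi'$ is of type~$\mathrm{(I)}$ or~$\mathrm{(II)}$ (never~$\mathrm{(III)}$). Conjugating by $\alpha$ we get $\tau_\alpha\varphi'(a_i)=\bfa^{N\bfn_i}$ and $\tau_\alpha\varphi'(\alpha^2)=\bfa^*(\alpha^2)^{\pm1}$, so $\tau_\alpha\varphi'$ has the same type as $\varphi'$. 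Conjugating by $\beta$ and using $\beta\alpha\beta^{-1}=\bfa^{\bfk'}\alpha^{-1}$ yields $\tau_\beta\varphi'(a_i)=\bfa^{M\bfn_i}$ and $\tau_\beta\varphi'(\alpha^2)=\beta(\bfa^\bfp\alpha^{\pm1})^2\beta^{-1}=\bfa^*(\alpha^2)^{\mp1}$, and likewise $\tau_{\alpha\beta}\varphi'(\alpha^2)=\bfa^*(\alpha^2)^{\mp1}$; thus conjugation by $\beta$ (or $\alpha\beta$) reverses the type. Consequently $\varphi'$ is of type~$\mathrm{(II)}$ if and only if $\tau_\beta\varphi'$ is of type~$\mathrm{(I)}$.

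The conclusion is then immediate from Theorem~\ref{GA}: if $\varphi'$ is of type~$\mathrm{(I)}$ then $R(\varphi')=\infty$; if $\varphi'$ is of type~$\mathrm{(II)}$ then $\tau_\beta\varphi'$ is of type~$\mathrm{(I)}$, so $R(\tau_\beta\varphi')=\infty$. Either way some summand on the right-hand side above is infinite, hence $R(\varphi)=\infty$, which is the $R_\infty$ property. I expect no serious obstacle here: the only point requiring care is the conjugation bookkeeping for $\tau_\beta\varphi'(\alpha^2)$, where the relation $\beta\alpha\beta^{-1}=\bfa^{\bfk'}\alpha^{-1}$ must be pushed through the square so that the power of $\alpha^2$ is inverted — exactly as in the $\Pi_3$ and $\Pi_5$ arguments — together with the preliminary check that $\GammaA$ is indeed isomorphic to $\Gamma_A$ so that Theorem~\ref{GA} applies.
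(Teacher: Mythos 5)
Your proposal is correct and follows essentially the same route as the paper: invoke the lemma that $\GammaA=\langle a_1,a_2,\alpha^2\rangle$ is characteristic, apply Theorem~\ref{HLP} over the quotient $\Pi_6(\bfk,\bfk')/\GammaA\cong(\bbz_2)^2$, and observe that conjugation by $\beta$ sends $\alpha^2$ to $\bfa^*\alpha^{-2}$ so that $\varphi'$ is of type $\mathrm{(II)}$ exactly when $\tau_\beta\varphi'$ is of type $\mathrm{(I)}$, forcing an infinite summand in every case. Your added checks (that $\alpha^2$ acts on $\langle a_1,a_2\rangle$ by $N^2=A$ so the lattice is indeed $\GammaA$, and the explicit coset bookkeeping) are consistent with, and merely fill in details of, the paper's argument.
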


\begin{proof}
Let $\varphi$ be an automorphism on $\Pi_6(\bfk,\bfk')$ and let $\varphi'$ be the restriction of $\varphi$ on ${\GammaA}$. Since $\Pi_6(\bfk,\bfk')/\GammaA\cong(\bbz_2)^2$, by Theorem~\ref{HLP}, we have
$$
R(\varphi)\ge\frac{1}{4}\left(
R(\varphi')+R(\tau_\alpha\varphi')+R(\tau_\beta\varphi')+R(\tau_{\alpha\beta}\varphi')
\right).
$$
Since
$$
\tau_\beta\varphi(a_i)=\beta\bfa^{\bfn_i}\beta^{-1}=\bfa^{M\bfn_i},\
\tau_\beta\varphi(\alpha^2)=\bfa^*\alpha^{\mp2},
$$
it follows that $\varphi'$ is of type (II) if and only if $\tau_\beta\varphi'$ is of type (I). By Theorem~\ref{GA}, we have the result.
\end{proof}
\bigskip

\section{The SC-groups $\Pi_{7}(\bfk)$}

Recall that
$$
\Pi_{7}(\bfk)=\left\langle{a_1, a_2, t,\alpha \ \Big|
\begin{array}{l}
[a_1,a_2]=1,\ ta_it^{-1}=A(a_i),\ \alpha a_i\alpha^{-1}=M(a_i),\\
\alpha^4=1,\ \alpha t\alpha^{-1}=\bfa^{\bfk}t^{-1}
\end{array}}\right\rangle,
$$
where $M$ is traceless with determinant $1$ and $MAM^{-1}=A^{-1}$ and
\begin{align*}
\bfk\in \frac{\bbz^2}{\left(\im(M+A^{-1})+\im(I-A^{-1})\right)}.
\end{align*}
Consider the subgroup of $\Pi_7(\bfk)$ generated by $a_1,a_2,t$ and $\alpha^2$. Since $\alpha t\alpha^{-1}=\bfa^\bfk t^{-1}$, we have
$$
\alpha^2t\alpha^{-2}=\alpha(\bfa^\bfk t^{-1})\alpha^{-1}=\bfa^{(M-A)\bfk}t.
$$
Note also that the condition on $M$ above implies that $M^2=-I$. Hence the subgroup $\langle a_1,a_2,t,\alpha^2\rangle$ is $\Pi_1((M-A)\bfk)$.

\begin{lemma}
The subgroup $\Pi_1((M-A)\bfk)$ of $\Pi_7(\bfk)$ is a characteristic subgroup.
\end{lemma}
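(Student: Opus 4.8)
The plan is to recognize $\Pi_1((M-A)\bfk)=\langle a_1,a_2,t,\alpha^2\rangle$ intrinsically inside $\Pi_7(\bfk)$ as the subgroup generated by \emph{all} elements of infinite order. Such a subgroup is automatically characteristic, because any automorphism preserves the order of an element and hence permutes this generating set. Writing a general element of $\Pi_7(\bfk)$ as $\bfa^\bfx t^z\alpha^w$ with $w\in\{0,1,2,3\}$, I would first observe that the assignment $\alpha\mapsto1$, $a_1,a_2,t\mapsto0$ extends to a homomorphism $\Pi_7(\bfk)\to\bbz_2$ (each of the defining relators visibly maps to $0$), whose kernel is exactly $\langle a_1,a_2,t,\alpha^2\rangle=\Pi_1((M-A)\bfk)$; that is, $\Pi_1((M-A)\bfk)$ is precisely the set of elements with $w$ even. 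So the whole proof comes down to two claims: (i) every element with $w$ odd has finite order, and (ii) $\langle a_1,a_2,t,\alpha^2\rangle$ is generated by infinite-order elements.

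For (i), take $g=\bfa^\bfx t^z\alpha^w$ with $w$ odd. From $\alpha t\alpha^{-1}=\bfa^\bfk t^{-1}$ one has $\alpha^w t^z\alpha^{-w}=\bfa^{*}t^{-z}$, and $\alpha^w\bfa^\bfx\alpha^{-w}=\bfa^{M^w\bfx}$, so
$$
g^2=\bfa^\bfx t^z\,(\alpha^w\bfa^\bfx\alpha^{-w})\,(\alpha^w t^z\alpha^{-w})\,\alpha^{2w}=\bfa^{*}\alpha^{2},
$$
the $t$-powers cancelling because $w$ is odd, and $\alpha^{2w}=\alpha^2$ since $\alpha^4=1$. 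Squaring once more and using $M^2=-I$ (which holds because $M$ is traceless with $\det M=1$, as already noted before the lemma), I get
$$
g^4=\bfa^{*}\alpha^2\bfa^{*}\alpha^2=\bfa^{(I+M^2)*}\alpha^4=1,
$$
so $g$ has order dividing $4$. Thus every infinite-order element of $\Pi_7(\bfk)$ lies in $\Pi_1((M-A)\bfk)$, which is (i).

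For (ii), the elements $a_1,a_2,t$ already have infinite order, and $\alpha^2=t^{-1}(t\alpha^2)$, where $t\alpha^2$ has infinite order because $(t\alpha^2)^2=t\,(\alpha^2 t\alpha^{-2})\,\alpha^4=t\,\bfa^{(M-A)\bfk}\,t=\bfa^{*}t^{2}$. Hence $\Pi_1((M-A)\bfk)$ is generated by infinite-order elements, and the proof is complete. I expect no genuine obstacle here: the only step with any content is the order computation in (i), whose one delicate point is that $M^2=-I$ makes the $\bbz^2$-component of $g^4$ vanish. (An alternative, in the style of the lemmas for $\Pi_3$, $\Pi_5$, $\Pi_6$, would be to use Remark~\ref{FI} and the full invariance of $\langle\bfa^{\bfm_1},\bfa^{\bfm_2}\rangle$ to show $\langle a_1,a_2\rangle$ is characteristic, and then note that $\langle\bar t,\bar\alpha^2\rangle$ is the unique abelian index-$2$ subgroup of $\Pi_7(\bfk)/\langle a_1,a_2\rangle\cong\bbz\rtimes\bbz_4$; the argument above is shorter, and, combined with Lemma~\ref{ch1} applied inside $\Pi_1((M-A)\bfk)$, also recovers the fact that $\GammaA$ is characteristic in $\Pi_7(\bfk)$.)
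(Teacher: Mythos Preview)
Your proof is correct and takes a genuinely different route from the paper. The paper proceeds by a lengthy case analysis: it lists the possible forms of $\varphi(a_i)$ allowed by $[a_1,a_2]=1$ and torsion considerations, eliminates most cases via Remark~\ref{FI} (the fully invariant lattice $\Lambda\subset\Gamma_A$), shows successively that $\langle a_1,a_2\rangle$ and $\langle a_1,a_2,\alpha^2\rangle$ are characteristic, and finally pins down $\varphi(t)=\bfa^\bfp t^{\pm1}\alpha^v$ with $v$ even and $\varphi(\alpha)=\bfa^\bfx t^u\alpha^w$ with $w$ odd. Your argument sidesteps all of this by observing that the complement of $\Pi_1((M-A)\bfk)$ in $\Pi_7(\bfk)$ consists entirely of torsion elements---the key computation being $g^4=1$ for $w$ odd, which rests on $M^2=-I$---so that $\Pi_1((M-A)\bfk)$ is intrinsically the subgroup generated by the infinite-order elements. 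This is considerably shorter and more conceptual. What the paper's approach buys, however, is the explicit description of $\varphi$ on the generators, which feeds directly into the proof of the $R_\infty$ property for $\Pi_7(\bfk)$ (one needs to know how $\tau_\alpha\varphi'$ acts on $t$); as you note, you would recover this by applying Lemma~\ref{ch1} inside $\Pi_1((M-A)\bfk)$.
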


\begin{proof}
Every element of $\Pi_7(\bfk)$ is of the form $\bfa^\bfx t^z\alpha^w$ with $w=0,1,2$ or $3$. Let $\varphi:\Pi_7(\bfk)\to\Pi_7(\bfk)$ be an automorphism. Because $[a_1,a_2]=1$
and using the fact that the elements of the form $\bfa^\bfx\alpha^2$ are torsion elements of order $2$, we can derive the following possibilities:
\begin{enumerate}
\item $\varphi(a_i)=\bfa^{\bfn_{i}}t^{z_i}\alpha^{w_i}$ with $w_i\in\{0,2\}$,
\item $\varphi(a_i)=\bfa^{\bfn_{i}}t^z\alpha^{w_i}$ with $w_i\in\{1,3\}$,
\item $\varphi(a_1)=\bfa^{\bfn_{1}}$, $\varphi(a_2)=\bfa^{\bfn_2}t^{z}\alpha^{w_2}$ with $w_2\in\{1,3\}$,
\item $\varphi(a_1)=\bfa^{\bfn_{1}}t^{z}\alpha^{w_1}$,
$\varphi(a_2)=\bfa^{\bfn_2}$ with $w_1\in\{1,3\}$.
\end{enumerate}
Observe that $(\bfa^\bfx t^z\alpha^w)^e=\bfa^*t^{\frac{e}{2}(1+(-1)^w)z}\alpha^{ew}$ when $e$ is even.

Consider the possibility (2) $\varphi(a_i)=\bfa^{\bfn_{i}}t^{z}\alpha^{w_i}$ with $w_i\in\{1,3\}$. Let $\varphi(t)=\bfa^\bfx t^u\alpha^w$. Then
$ta_it^{-1}=A(a_i)$ induces that
$\ell_{1i}+\ell_{2i}$ is odd
and $\bfa^*t^{2u+(-1)^wz}\alpha^{w_i}=\bfa^*t^z\alpha^{w_1\ell_{1i}+w_2\ell_{2i}}$. This implies that if $w$ is even then $u=0$ and if $w$ is odd then $u=z$. Hence $\varphi(t)$ is $\bfa^\bfx, \bfa^\bfx\alpha^2, \bfa^\bfx t^z\alpha$ or $\bfa^\bfx t^z\alpha^3$.
By Remark~\ref{FI}, $\Pi_7(\bfk)$ has a fully invariant subgroup $\Lambda\subset\GammaA$ and hence $\varphi$ induces an automorphism on the group $\Lambda/\Lambda\cap\langle a_1,a_2\rangle\cong\bbz$. Since $\varphi(t^4)=\bfa^*$, this rules out all the cases of $\varphi(t)$.

Consider the possibility (3)
$\varphi(a_1)=\bfa^{\bfn_{1}}$, $\varphi(a_2)=\bfa^{\bfn_2}t^{z}\alpha^{w_2}$ with $w_2\in\{1,3\}$.
Let $\varphi(t)=\bfa^\bfx t^u\alpha^w$. Then
\begin{align*}
ta_1t^{-1}=A(a_1)&\Rightarrow \ell_{21} \text{ is even and so both $\ell_{11}$ and $\ell_{22}$ are odd},\\
ta_2t^{-1}=A(a_2)&\Rightarrow 2u+(-1)^wz=z
\text{ (since $\ell_{22}$ is odd).}
\end{align*}
Hence $\varphi(t)$ is $\bfa^\bfx, \bfa^\bfx\alpha^2, \bfa^\bfx t^z\alpha$ or $\bfa^\bfx t^z\alpha^3$. As above, any case cannot occur. Similarly, the possibility (4) cannot occur.

In all, we have the only possibility (1) $\varphi(a_i)=\bfa^{\bfn_{i}}t^{z_i}\alpha^{w_i}$ with $w_i\in\{0,2\}$.
From  Remark~\ref{FI}, since $\langle \bfa^{\bfm_1}, \bfa^{\bfm_2}\rangle$ is a finite index subgroup of $\langle a_1,a_2\rangle\cong\bbz^2$, we have $\det[\bfm_1\ \bfm_2]\ne0$. Furthermore,  $\langle \bfa^{\bfm_1}, \bfa^{\bfm_2}\rangle$ is fully invariant, and so $\varphi(\bfa^{\bfm_i})\in \langle \bfa^{\bfm_1}, \bfa^{\bfm_2}\rangle$. Since
\begin{align*}
\varphi(\bfa^{\bfm_i})&=\varphi(a_1)^{m_{1i}}\varphi(a_2)^{m_{2i}}
=(\bfa^{\bfn_1}t^{z_1}\alpha^{w_1})^{m_{1i}}(\bfa^{\bfn_{2}}t^{z_2}\alpha^{w_2})^{m_{2i}}\\
&=\bfa^*t^{m_{1i}z_1+m_{2i}z_2}\alpha^{m_{1i}w_1+m_{2i}w_2},
\end{align*}
we have $m_{1i}z_1+m_{2i}z_2=0$ and $m_{1i}w_1+m_{2i}w_2\equiv0\mod(4)$. As the matrix $[\bfm_1\ \bfm_2]$ is nonsigular, $z_1=z_2=0$ and hence $\varphi(a_i)=\bfa^{\bfn_{i}}\alpha^{w_i}$ with $w_i$ even. Since $\bfa^{\bfn_i}\alpha^2$ is an element of order $2$, we must have $w_i=0$  and $\varphi(a_i)=\bfa^{\bfn_{i}}$.

On the other hand, since $\alpha$ is a torsion element, so is $\varphi(\alpha)$ and this shows that $\varphi(\alpha)$ is of the form $\bfa^\bfx t^u\alpha^w$ with $w$ odd.

In all, the subgroup $\langle a_1,a_2,\alpha^2\rangle$ is a characteristic subgroup of $\Pi_7(\bfk)$, so $\varphi$ induces an automorphism $\bar\varphi$ on the quotient group $\Pi_7(\bfk)/\langle a_1,a_2,\alpha^2\rangle\cong\bbz\rtimes\bbz_2$, with generators $\bar{t}, \bar\alpha$. It is clear that $\bar\varphi(\bar{t})=\bar{t}^{\pm1}$. Summing up, we have
\begin{align*}
&\varphi(a_i)=\bfa^{\bfn_i},\\
&\varphi(t)=\bfa^\bfp t^{\pm1}\alpha^v \text{ with $v$ even},\\
&\varphi(\alpha)=\bfa^\bfx t^u\alpha^w \text{ with $w$ odd.}
\end{align*}
Therefore it follows easily that the subgroup $\Pi_1((M-A)\bfk)$ is a characteristic subgroup of $\Pi_7(\bfk)$.
\end{proof}

We remark from Lemma~\ref{ch1} that {$\GammaA=\langle a_1, a_2, t\rangle$} is a characteristic subgroup of $\Pi_1((M-A)\bfk)$ and $v$ must be $0$. Furthermore, $\GammaA$ is a characteristic subgroup of $\Pi_7(\bfk)$.

\begin{theorem}
The SC-groups $\Pi_7(\bfk)$ have the $R_\infty$ property.
\end{theorem}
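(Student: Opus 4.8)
The plan is to apply the averaging inequality of Theorem~\ref{HLP} with the characteristic subgroup $\GammaA=\langle a_1,a_2,t\rangle$ of $\Pi_7(\bfk)$ — this is exactly the strategy used for $\Pi_4(\bfk)$ and $\Pi_6(\bfk,\bfk')$. Since $\Pi_7(\bfk)/\GammaA\cong\bbz_4$, generated by the image of $\alpha$, for any automorphism $\varphi$ on $\Pi_7(\bfk)$, writing $\varphi'=\varphi|_{\GammaA}$, I would form the commutative diagram with rows $1\to\GammaA\to\Pi_7(\bfk)\to\bbz_4\to1$ and read off from Theorem~\ref{HLP}.(2) that
$$
R(\varphi)\ge\frac{1}{4}\left(R(\varphi')+R(\tau_\alpha\varphi')+R(\tau_{\alpha^2}\varphi')+R(\tau_{\alpha^3}\varphi')\right).
$$
It then suffices to show that one of these four Reidemeister numbers is infinite.

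The next step is to use the preceding lemma (and the remark after it): every automorphism $\varphi$ on $\Pi_7(\bfk)$ satisfies $\varphi(a_i)=\bfa^{\bfn_i}$ and $\varphi(t)=\bfa^{\bfp}t^{\pm1}$, so $\varphi'$ is an automorphism of $\GammaA$ of type $\mathrm{(I)}$ or type $\mathrm{(II)}$, never of type $\mathrm{(III)}$. Writing $\varphi'(t)=\bfa^{\bfp}t^{\omega}$ with $\omega\in\{\pm1\}$, and using the relations $\alpha a_i\alpha^{-1}=M(a_i)$ and $\alpha t\alpha^{-1}=\bfa^{\bfk}t^{-1}$, I would compute
$$
\tau_\alpha\varphi'(a_i)=\bfa^{M\bfn_i},\qquad
\tau_\alpha\varphi'(t)=\alpha(\bfa^{\bfp}t^{\omega})\alpha^{-1}=\bfa^{*}t^{-\omega},
$$
which shows that $\varphi'$ is of type $\mathrm{(I)}$ if and only if $\tau_\alpha\varphi'$ is of type $\mathrm{(II)}$, and conversely. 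In particular exactly one of $\varphi'$ and $\tau_\alpha\varphi'$ is of type $\mathrm{(I)}$.

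To finish, I would invoke Theorem~\ref{Pi_1-R} for $\GammaA$: an automorphism of $\GammaA$ of type $\mathrm{(I)}$ has infinite Reidemeister number. Hence one of $R(\varphi')$, $R(\tau_\alpha\varphi')$ equals $\infty$, and the displayed inequality forces $R(\varphi)=\infty$; since $\varphi$ was arbitrary, $\Pi_7(\bfk)$ has the $R_\infty$ property. I do not expect any real obstacle: all the substantive work — that $\GammaA$ is characteristic in $\Pi_7(\bfk)$ and that $\varphi(t)=\bfa^{\bfp}t^{\pm1}$ — is already carried out in the lemma, so the only thing left to verify is the one-line conjugation identity $\tau_\alpha\varphi'(t)=\bfa^{*}t^{-\omega}$, which is immediate from $\alpha t\alpha^{-1}=\bfa^{\bfk}t^{-1}$.
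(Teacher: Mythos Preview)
Your proof is correct and follows essentially the same strategy as the paper: apply the averaging inequality from Theorem~\ref{HLP} over a characteristic lattice subgroup, observe that conjugation by $\alpha$ flips type $\mathrm{(I)}$ and type $\mathrm{(II)}$ on $\GammaA$, and conclude via Theorem~\ref{Pi_1-R} that one summand is infinite. The only difference is that the paper restricts to the index-$2$ characteristic subgroup $\Pi_1((M-A)\bfk)=\langle a_1,a_2,t,\alpha^2\rangle$ rather than to $\GammaA$ itself (index $4$), giving two summands instead of four; but since the decisive computation $\tau_\alpha\varphi'(t)=\bfa^{*}t^{-\omega}$ and the appeal to the type dichotomy are identical, this is a cosmetic variation, and your direct use of $\GammaA$ is in fact more in line with the proofs for $\Pi_3$, $\Pi_4$, $\Pi_6$.
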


\begin{proof}
Let $\varphi$ be an automorphism on $\Pi_7(\bfk)$ and let $\varphi'$ be the restriction of $\varphi$ on $\Pi_1((M-A)\bfk)$, which is of index $2$ in $\Pi_7(\bfk)$. By Theorem~\ref{HLP}, we have
$$
R(\varphi)\ge\frac{1}{2}\left(R(\varphi')+R(\tau_{\alpha}\varphi')\right).
$$
Since
$$
\tau_\alpha\varphi(a_i)=\alpha(\bfa^{\bfn_i})\alpha^{-1}=\bfa^{A\bfn_i},\
\tau_\alpha\varphi(t)=\bfa^*t^{\mp1},
$$
it shows that $\varphi'$ is of type (II) if and only if $\tau_\alpha\varphi'$ is of type (I). By Theorem~\ref{GA}, we have the result.
\end{proof}
\bigskip

\section{The SC-groups $\Pi_{8}(\bfk,\bfm)$}

Recall that
$$
\Pi_{8}(\bfk,\bfm)=\left\langle{ a_1, a_2, \alpha, \beta \ \Big|
\begin{array}{l}
[a_1,a_2]=1,\ \beta a_i\beta^{-1}=N(a_i),\\
\alpha a_i\alpha^{-1}=M(a_i),\ \alpha^4=1,\\
\alpha \beta^2 \alpha^{-1}=\bfa^{\bfk}\beta^{-2},\
\alpha\beta^{-1}=\bfa^{\bfm}\beta\alpha^{-1}
\end{array}}\right\rangle,
$$
where $A$ has a square root $N$
\begin{align*}
N=-
\left[\begin{matrix}\tfrac{\ell_{11}-1}{\sqrt{\ell_{11}+\ell_{22}-2}}
&\tfrac{\ell_{12}}{\sqrt{\ell_{11}+\ell_{22}-2}}\\\tfrac{\ell_{21}}{\sqrt{\ell_{11}+\ell_{22}-2}}
&\tfrac{\ell_{22}-1}{\sqrt{\ell_{11}+\ell_{22}-2}}\end{matrix}\right],
\end{align*}
and $M$ is traceless with determinant $1$ and $MAM^{-1}=A^{-1}$, and
\begin{align*}
&(\bfk,\bfm)\in \frac{\bbz^2}{\im\left((I-A^{-1})+(M+A^{-1})(I+N)\right)}\oplus\frac{\ker(M+N^{-1})}{\im(M+N)}.
\end{align*}
Notice that the subgroup $\langle a_1,a_2,\beta^2,\alpha\rangle$ of $\Pi_8(\bfk,\bfm)$ is isomorphic to $\Pi_7(\bfk)$, and $\langle a_1,a_2,\beta^2\rangle=\GammaA$ with holonomy group $\Phi_8\cong D(4)$.

Note that every element of $\Pi_8(\bfk,\bfm)$ is of the form $\bfa^\bfx\beta^z\alpha^w$ with $w\in\{0,1,2,3\}$ and

\begin{itemize}
\item $\alpha^w \beta^{z} = \bfa^* \beta^{(-1)^w z} \alpha^{(-1)^z w}$
\item if $w=0,2$, then
\begin{align*}
(\bfa^{\bfx}\beta^z\alpha^w)^{k}&=
\begin{cases}
\bfa^* \beta^{kz}, &\text{when $k$ is even};\\
\bfa^* \beta^{kz}\alpha^w, &\text{when $k$ is odd},
\end{cases}
\end{align*}

\item if $w=1,3$, then
\begin{align*}
(\bfa^{\bfx}\beta^z\alpha^w)^{k}&=
\begin{cases}
\bfa^*, &\text{when $k\equiv 0\!\!\!\pmod4$};\\
\bfa^*\beta^{z}\alpha^w, &\text{when $k\equiv 1\!\!\!\pmod4$};\\
\bfa^*\alpha^{\{1+(-1)^z\}w}, &\text{when $k\equiv 2\!\!\!\pmod4$};\\
\bfa^*\beta^{z}\alpha^{\{2+(-1)^z\}w}, &\text{when $k\equiv 3\!\!\!\pmod4.$}
\end{cases}
\end{align*}
\end{itemize}

\begin{lemma}
The subgroup ${\Gamma_{\!A}}=\langle a_1, a_2, t\rangle$ is a
a characteristic subgroup of $\Pi_8(\bfk,\bfm)$.
\end{lemma}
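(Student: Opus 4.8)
The structure should mirror the proofs of the analogous lemmas for $\Pi_3$, $\Pi_5$, $\Pi_6$ and $\Pi_7$: write an arbitrary automorphism $\varphi$ on generators, eliminate the impossible shapes using torsion and commutativity constraints, and descend through the characteristic subgroups $\langle a_1,a_2\rangle\subset\langle a_1,a_2,\beta^2\rangle\subset\langle a_1,a_2,\beta^2,\alpha\rangle\subset\Pi_8(\bfk,\bfm)$. Concretely, let $\varphi:\Pi_8(\bfk,\bfm)\to\Pi_8(\bfk,\bfm)$ be an automorphism and write each element in the normal form $\bfa^\bfx\beta^z\alpha^w$, $w\in\{0,1,2,3\}$, using the multiplication rules listed above.

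\smallskip
\noindent\textbf{Step 1: locate $\langle a_1,a_2\rangle$.} Since $[a_1,a_2]=1$, the images $\varphi(a_1),\varphi(a_2)$ commute, so — exactly as in the $\Pi_5,\Pi_6,\Pi_7$ arguments — there are only four possible shapes: $\varphi(a_i)=\bfa^{\bfn_i}\beta^{z_i}\alpha^{w_i}$ (with the various $\beta^{z}\alpha^{w}$-parts forced to be compatible), or one of the three asymmetric possibilities where one $a_i$ maps into $\GammaA$ and the other picks up a $\beta$ or odd-power-of-$\alpha$ tail. One rules out the asymmetric cases by the relations $\beta a_i\beta^{-1}=N(a_i)$ and $\alpha a_i\alpha^{-1}=M(a_i)$: conjugating a would-be image of the form $\bfa^*\beta^z\alpha^w$ forces parity conditions on the rows of $N$ (resp.\ $M$) and then a mismatch of $\beta$- or $\alpha$-exponents, just as $\ell_{21}$ being even forced a contradiction in the $\Pi_6$ and $\Pi_7$ proofs. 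For the symmetric case one invokes Remark~\ref{FI}: the fully invariant finite-index lattice $\Lambda\subset\GammaA$ has $\langle\bfa^{\bfm_1},\bfa^{\bfm_2}\rangle$ fully invariant of finite index in $\langle a_1,a_2\rangle$, so $\varphi(\bfa^{\bfm_i})=\varphi(a_1)^{m_{1i}}\varphi(a_2)^{m_{2i}}$ must lie back in $\langle\bfa^{\bfm_1},\bfa^{\bfm_2}\rangle\subset\bbr^2$; computing this product via the power formulas kills all $\beta$- and $\alpha$-tails (the $\beta$-exponents $m_{1i}z_1+m_{2i}z_2$ vanish and the $\alpha$-exponents $m_{1i}w_1+m_{2i}w_2$ vanish $\bmod\,4$ because $[\bfm_1\ \bfm_2]$ is nonsingular), and then a surviving $\beta^2$- or $\alpha^2$-tail is a torsion element of order $2$ which cannot be $\varphi(a_i)$ since $a_i$ is torsion-free. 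Hence $\varphi(a_i)=\bfa^{\bfn_i}$ and $\langle a_1,a_2\rangle$ is characteristic.

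\smallskip
\noindent\textbf{Step 2: descend on the quotient.} Then $\varphi$ induces $\bar\varphi$ on $\Pi_8(\bfk,\bfm)/\langle a_1,a_2\rangle$, which is the finite group $\langle\bar\alpha,\bar\beta\mid\bar\alpha^4=1,\ \bar\alpha\bar\beta^2\bar\alpha^{-1}=\bar\beta^{-2},\ \bar\alpha\bar\beta^{-1}=\bar\beta\bar\alpha^{-1}\rangle$ — essentially the binary-dihedral/order-$16$ holonomy extension with $\Phi_8\cong D(4)$. One checks on this finite group that $\bar\beta^2$ generates the (characteristic, being the unique such) infinite-order-direction subgroup once we remember the ambient $\GammaA=\langle a_1,a_2,\beta^2\rangle$ is a lattice: $\bar\varphi$ must send the $\beta^2$-line to itself, i.e.\ $\varphi(\beta)=\bfa^\bfp\beta^{\pm1}\alpha^v$, and since $\varphi(\beta^2)=\bfa^*\beta^{\pm2}\in\GammaA$ we get that $\langle a_1,a_2,\beta^2\rangle=\GammaA$ is characteristic; also $\alpha$ is torsion so $\varphi(\alpha)=\bfa^\bfx\beta^{z}\alpha^{w}$ with the $\alpha$-part a unit in $\bbz_4$ (i.e.\ $w$ odd) and $z$ even, matching the $\Pi_7$ picture inside $\langle a_1,a_2,\beta^2,\alpha\rangle\cong\Pi_7(\bfk)$. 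Finally, applying Lemma~5.1 (the $\GammaA$-characteristic-in-$\Pi_1$ lemma) inside $\langle a_1,a_2,\beta^2\rangle$ and the $\Pi_7$ lemma inside $\langle a_1,a_2,\beta^2,\alpha\rangle$ — both of which give $\GammaA=\langle a_1,a_2,t\rangle$ characteristic there — and combining with the characteristicity of each intermediate subgroup in $\Pi_8(\bfk,\bfm)$, we conclude $\GammaA$ is characteristic in $\Pi_8(\bfk,\bfm)$. This identifies the surviving shape as $\varphi(a_i)=\bfa^{\bfn_i}$, $\varphi(\beta)=\bfa^\bfp\beta^{\pm1}$, $\varphi(\alpha)=\bfa^\bfx\alpha^{w}$ with $w$ odd (the $\beta^z$- and $\alpha^v$-tails on $\varphi(\beta)$ being forced away exactly as $v=0$ was forced in the remark after the $\Pi_7$ lemma).

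\smallskip
\noindent\textbf{Main obstacle.} The bookkeeping in Step 1 is where the real work lies: $\Pi_8$ has the richest normal form of all nine groups ($\alpha$ of order $4$, $\beta^2$ central-direction, and the twisted relation $\alpha\beta^{-1}=\bfa^\bfm\beta\alpha^{-1}$), so the list of a priori shapes for $\varphi(a_i)$, $\varphi(\beta)$, $\varphi(\alpha)$ is long, and each elimination relies on a slightly different parity argument coming from $N=(\ell'_{ij})$ and $M$ together with the power formulas for $(\bfa^\bfx\beta^z\alpha^w)^k$ displayed before the lemma. The conceptual content is identical to the earlier lemmas; the danger is purely combinatorial case-management, and the cleanest route is to first dispatch $\langle a_1,a_2\rangle$ as a characteristic subgroup, then import the $\Pi_1$ and $\Pi_7$ lemmas wholesale for the two nested subgroups rather than re-deriving them.
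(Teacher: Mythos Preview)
Your overall strategy matches the paper's: first establish $\varphi(a_i)=\bfa^{\bfn_i}$ by a case analysis driven by $[a_1,a_2]=1$, torsion considerations, and the fully-invariant lattice $\Lambda$ of Remark~\ref{FI}; then read off the shape of $\varphi(\beta)$ and $\varphi(\alpha)$ from a quotient. Two points need correction.

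First, a slip: the quotient $\Pi_8(\bfk,\bfm)/\langle a_1,a_2\rangle$ is \emph{not} finite --- $\bar\beta$ has infinite order there (its square is the image of the lattice direction $t=\beta^2$). You evidently know this, since two lines later you speak of the ``infinite-order-direction subgroup'', but the phrase ``this finite group'' must go. Second, and more substantively, your Step~2 route differs from the paper's and is more circuitous. The paper does \emph{not} show that $\langle a_1,a_2,\beta^2,\alpha\rangle\cong\Pi_7(\bfk)$ is characteristic in $\Pi_8$ and then import the $\Pi_7$ lemma. Instead, once $\varphi(a_i)=\bfa^{\bfn_i}$ is known, the relation $\alpha^4=1$ forces $\varphi(\alpha)=\bfa^{\bfx}\beta^z\alpha^w$ with $z$ even and $w$ odd; hence $\langle a_1,a_2,\alpha^2\rangle$ is characteristic, and the quotient $\Pi_8/\langle a_1,a_2,\alpha^2\rangle\cong\bbz\rtimes\bbz_2$ (generators $\bar\beta,\bar\alpha$) immediately gives $\varphi(\beta)=\bfa^\bfq\beta^{\pm1}\alpha^{2v}$, whence $\varphi(\beta^2)=\bfa^*\beta^{\pm2}\in\GammaA$. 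Your nested-import plan would require first proving that $\Pi_7(\bfk)$ sits characteristically inside $\Pi_8$, which is precisely the kind of statement you are trying to extract --- so there is no genuine shortcut there, and the paper's route through $\langle a_1,a_2,\alpha^2\rangle$ is cleaner. (Note also that the paper does \emph{not} obtain the stripped-down forms $\varphi(\beta)=\bfa^\bfp\beta^{\pm1}$, $\varphi(\alpha)=\bfa^\bfx\alpha^w$ you claim at the end; the $\alpha^{2v}$ and $\beta^z$ tails survive, and this is harmless for the lemma.)

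Finally, your Step~1 underestimates the case count: commutation together with the order-$2$ torsion elements $\bfa^\bfx\alpha^2$ yields \emph{seven} a~priori shapes for $(\varphi(a_1),\varphi(a_2))$ in the paper, not four, and several eliminations (notably the cases where one or both images carry an odd $\alpha$-power) need the full power formulas for $(\bfa^\bfx\beta^z\alpha^w)^k$ together with the $\Lambda$-argument applied to $\varphi(\beta)$, not merely parity of the entries of $N$. Your own caveat that ``the bookkeeping in Step~1 is where the real work lies'' is accurate; it is longer here than for any of $\Pi_3,\dots,\Pi_7$.
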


\begin{proof}
Let $\varphi:\Pi_8(\bfk,\bfm)\to\Pi_8(\bfk,\bfm)$ be an automorphism.
Using the relation $[a_1,a_2]=1$ and using the fact that the elements of the form $\bfa^\bfx\alpha^2$ are torsion elements of order $2$, we can derive the following possibilities:
\begin{enumerate}
\item $\varphi(a_i)=\bfa^{\bfn_{i}}\beta^{z_i}$,
\item $\varphi(a_1)=\bfa^{\bfn_{1}}, \,\,\varphi(a_2)=\bfa^{\bfn_2}\beta^z\alpha^{w}$ with $w\in\{1,2,3\}$,
\item $\varphi(a_1)=\bfa^{\bfn_{1}}\beta^{z_1}, \,\, \varphi(a_2)=\bfa^{\bfn_2}\beta^{z_2}\alpha^{2}$ with $z_i\ne0$,
\item $\varphi(a_1)=\bfa^{\bfn_{1}}\beta^z\alpha^{w}, \varphi(a_2)=\bfa^{\bfn_2}$ with $w\in\{1,2,3\}$,
\item $\varphi(a_1)=\bfa^{\bfn_{1}}\beta^{z_1}\alpha^{2}, \varphi(a_2)=\bfa^{\bfn_2}\beta^{z_2}$ with $z_i\ne0$,
\item $\varphi(a_i)=\bfa^{\bfn_{i}}\beta^{z_i}\alpha^{2}$ with $z_i\ne0$,
\item $\varphi(a_1)=\bfa^{\bfn_{1}}\beta^{z}\alpha^{w_1}, \varphi(a_2)=\bfa^{\bfn_2}\beta^{z}\alpha^{w_2}$ with $w_i\in\{1,3\}$.
\end{enumerate}

Consider the possibility (2):
$\varphi(a_1)=\bfa^{\bfn_{1}}, \,\,\varphi(a_2)=\bfa^{\bfn_2}\beta^z\alpha^{w}$
with $w=1,2,3$.
Let $\varphi(\beta)=\bfa^\bfx\beta^u\alpha^v$. Then
$\beta a_1\beta^{-1}=N(a_1)$ induces that
$$
\bfa^*=\bfa^*(\beta^z\alpha^w)^{\ell_{21}'}.
$$
If $w=2$ then $\ell_{21}'$ must be even and so $z=0$ as $\ell_{21}'\ne0$. Since $\ell_{22}'$ is odd, $\varphi(a_2)=\bfa^{\bfn_2}\alpha^2$. This element is of order $2$, which is impossible. Hence $w$ is odd. We can show in a similar way that $\ell_{21}'\equiv0\pmod4$ or else $\ell_{21}'\equiv2\pmod4$ and $z$ is odd; in any case since $\ell_{21}'$ is even, both $\ell_{11}'$ and $\ell_{22}'$ are odd.
Note that $\beta a_2\beta^{-1}=a_1^{\ell_{21}'}a_2^{\ell_{22}'}$ induces
\begin{align*}
&\bfa^*\beta^{2u+(-1)^vz}\alpha^{(-1)^u(w+((-1)^z-1)v}\\
&=
\begin{cases}
\bfa^*\beta^z\alpha^{3w}&\text{when $\ell_{22}'\equiv3\hspace{-.3cm}\pmod4$ and
$z$ is even};\\
\bfa^*\beta^z\alpha^w&\text{otherwise.}
\end{cases}
\end{align*}
This shows that
if $v$ is even then $u=0$, and if $v$ is odd then $u=z$. Hence $\varphi(\beta)$ is of the form $\bfa^\bfx\alpha^2$ or $\bfa^\bfx\beta^z\alpha^w$ with $w$ odd. The element of the form $\bfa^\bfx\alpha^2$ is of order $2$ and so $\varphi(\beta)=\bfa^\bfx\beta^z\alpha^w$. Note also that $\varphi(\beta^2)=\bfa^*$ or $\bfa^*\alpha^2$. By Remark~\ref{FI}, $\Pi_8(\bfk,\bfm)$ has a fully invariant subgroup $\Lambda\subset\GammaA=\langle a_1, a_2,\beta^2\rangle$ and hence $\varphi$ induces an automorphism $\bar\varphi$ on the group $\Lambda/\Lambda\cap\langle a_1,a_2\rangle\cong\bbz$, which is generated by some even power of $\bar\beta$, say $\bar\beta^{2k}$. Thus $\bar\varphi(\bar\beta^{2k})=\bar\beta^{\pm 2k}$. This implies that $\varphi(\beta^{2k})=\bfa^*\beta^{\pm 2k}$. This contradicts that $\varphi(\beta^2)=\bfa^*$ or $\bfa^*\alpha^2$. Thus the possibility (2) cannot occur.
By a symmetry of (2) and (4), the possibility (4) also can be eliminated.

Consider the possibility (3):
$\varphi(a_1)=\bfa^{\bfn_{1}}\beta^{z_1}, \,\, \varphi(a_2)=\bfa^{\bfn_2}\beta^{z_2}\alpha^{2}$
with $z_1\ne 0$.
Let $\varphi(\beta)=\bfa^\bfx\beta^u\alpha^v$. Then
$\beta a_i\beta^{-1}=N(a_i)$ induces that $z_1\ell_{1i}'+z_2\ell_{2i}'=(-1)^v z_i$.
Thus $(z_1,z_2)$ is a solution of $(N^t-(-1)^vI)\bfx={\bf0}$.
However, since $N^t$ has irrational eigenvalues, it follows that $z_i=0$, a contradiction.
Similarly, we can show that the possibility (5) cannot occur.

Consider the possibility (6):
$\varphi(a_i)=\bfa^{\bfn_{i}}\beta^{z_i}\alpha^{2}.$
Let $\varphi(\beta)=\bfa^\bfx\beta^u\alpha^v$.
Then by the same reason as above, $\beta a_i\beta^{-1}=N(a_i)$ induces that $z_1\ell_{1i}'+z_2\ell_{2i}'=(-1)^v z_i$ and so $z_i=0$, a contradiction.

Consider the possibility (7):
$\varphi(a_i)=\bfa^{\bfn_{i}}\beta^{z}\alpha^{w_i}$
with $w_i$ odd.
Let $\varphi(\beta)=\bfa^\bfx\beta^u\alpha^v$. Observe that
\begin{align*}
\varphi(\beta a_i\beta^{-1})=\bfa^*\beta^{2u+(-1)^vz}\alpha^{(-1)^u((-1)^z-1)v+w_i)}.
\end{align*}
If $2u+(-1)^vz=0$ then $z$ is even and so $(-1)^u((-1)^z-1)v+w_i)=(-1)^uw_i$ is odd. On the other hand, $\varphi(N(a_i))=\bfa^*(\beta^z\alpha^{w_1})^{\ell_{1i}'}(\beta^z\alpha^{w_2})^{\ell_{2i}'}$. When {$\ell_{1i}'+\ell_{2i}'$ is even}, it can be seen that $\varphi(N(a_i))=\bfa^*\alpha^{\text{even}}$. By comparing both sides, we obtain a contradiction. When {$\ell_{1i}'+\ell_{2i}'$ is odd}, it can be seen that $\varphi(N(a_i))=\bfa^*\beta^z\alpha^{\text{odd}}$. By comparing both sides, we obtain that $z=2u+(-1)^vz$. 
This implies that if $v$ is even then $u=0$ and if $v$ is odd then $u=z$. Consequently, $\varphi(\beta)=\bfa^{\bfx} \alpha^2$ or $\bfa^{\bfx}\beta^z \alpha^w$ with $w$ odd. Since $\bfa^\bfx\alpha^2$ is of order $2$, this case is eliminated. By the same reason as used in the possibility (2), we also can exclude the remaining case where $\varphi(\beta)=\bfa^{\bfx}\beta^z \alpha^w$ with $w$ odd.

\bigskip
In all, we have the only possibility (1): $\varphi(a_i)=\bfa^{\bfn_{i}}\beta^{z_i}$.
From Remark~\ref{FI}, since $\langle \bfa^{\bfm_1}, \bfa^{\bfm_2}\rangle$ is a finite index subgroup of $\langle a_1,a_2\rangle\cong\bbz^2$, we have $\det[\bfm_1\ \bfm_2]\ne0$. Furthermore, $\langle \bfa^{\bfm_1}, \bfa^{\bfm_2}\rangle$ is fully invariant, and so $\varphi(\bfa^{\bfm_i})\in \langle \bfa^{\bfm_1}, \bfa^{\bfm_2}\rangle$. Since
\begin{align*}
\varphi(\bfa^{\bfm_i})&=\varphi(a_1)^{m_{1i}}\varphi(a_2)^{m_{2i}}
=(\bfa^{\bfn_1}\beta^{z_1})^{m_{1i}}(\bfa^{\bfn_{2}}\beta^{z_2})^{m_{2i}}\\
&=\bfa^*\beta^{m_{1i}z_1+m_{2i}z_2},
\end{align*}
we have $m_{1i}z_1+m_{2i}z_2=0$. As the matrix $[\bfm_1\ \bfm_2]$ is nonsigular, $z_1=z_2=0$ and hence $\varphi(a_i)=\bfa^{\bfn_{i}}$.

Since $\alpha^4=1$, we have $\varphi(\alpha)=\bfa^{\bfx}\beta^z\alpha^{w}$
with $z$ even and $w\in\{1,3\}$. Thus
the subgroup $\langle a_1,a_2,\alpha^2 \rangle$ is a a characteristic subgroup of $\Pi_8(\bfk,\bfm)$, so $\varphi$ induces an automorphism $\bar\varphi$ on the quotient group $\Pi_8(\bfk,\bfm)/\langle a_1,a_2, \alpha^2 \rangle\cong\bbz\rtimes\bbz_2$, with generators $\bar\beta,\bar\alpha$. Hence we must have $\bar\varphi(\bar\beta)=\bar\beta^{\pm1}$ or $\varphi(\beta)=\bfa^\bfq\beta^{\pm1}\alpha^{2v}$.
Consequently, the subgroups $\GammaA=\langle a_1,a_2,\beta^2\rangle$ is a characteristic subgroup of $\Pi_8(\bfk,\bfm)$.
\end{proof}

\begin{theorem}
The SC-groups $\Pi_8(\bfk,\bfm)$ have the $R_\infty$ property.
\end{theorem}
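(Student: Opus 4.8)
The plan is to mimic the structure of the proofs for $\Pi_4(\bfk)$, $\Pi_6(\bfk,\bfk')$ and $\Pi_7(\bfk)$: we have just shown in the preceding lemma that $\GammaA=\langle a_1,a_2,\beta^2\rangle$ is a characteristic subgroup of $\Pi_8(\bfk,\bfm)$ with holonomy group $\Phi_8\cong D(4)$, so for any automorphism $\varphi$ on $\Pi_8(\bfk,\bfm)$ we get a commutative diagram
$$
\CD
1@>>>\GammaA@>>>\Pi_8(\bfk,\bfm)@>>>\Phi_8@>>>1\\
@.@VV{\varphi'}V@VV{\varphi}V@VV{\bar\varphi}V\\
1@>>>\GammaA@>>>\Pi_8(\bfk,\bfm)@>>>\Phi_8@>>>1
\endCD
$$
with $\varphi'=\varphi|_{\GammaA}$. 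Applying Theorem~\ref{HLP}.(2) we obtain
$$
R(\varphi)\ge\frac{1}{8}\sum_{\bar\gamma\in\Phi_8}R(\tau_\gamma\varphi'),
$$
the sum running over coset representatives $\gamma\in\Pi_8(\bfk,\bfm)$ of $\GammaA$.

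The next step is the same dichotomy as before: by Theorem~\ref{Pi_1-R}, $R(\varphi')$ is already $\infty$ unless $\varphi'$ is of type $\mathrm{(II)}$ with $\det\varphi'=-1$, in which case $R(\varphi')=4$; so assume we are in that finite case. Then I would single out the generator $\alpha$ (or, if that fails, $\beta$) and compute $\tau_\alpha\varphi'$ on the generators of $\GammaA$. From the lemma we know $\varphi(a_i)=\bfa^{\bfn_i}$ and $\varphi(\beta)=\bfa^\bfq\beta^{\pm1}\alpha^{2v}$; conjugating by $\alpha$ uses $\alpha a_i\alpha^{-1}=M(a_i)$ and $\alpha\beta^2\alpha^{-1}=\bfa^{\bfk}\beta^{-2}$, so that $\tau_\alpha\varphi'(a_i)=\bfa^{M\bfn_i}$ and $\tau_\alpha\varphi'(\beta^2)=\alpha(\bfa^*\beta^{\mp2})\alpha^{-1}=\bfa^*\beta^{\pm2}$. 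Hence if $\varphi'$ is of type $\mathrm{(II)}$ (i.e. $\varphi(\beta^2)=\bfa^*\beta^{-2}$), then $\tau_\alpha\varphi'$ sends $\beta^2$ to $\bfa^*\beta^{2}$ and is therefore of type $\mathrm{(I)}$, so $R(\tau_\alpha\varphi')=\infty$ by Theorem~\ref{Pi_1-R}. Plugging this into the averaging inequality gives $R(\varphi)=\infty$, and combined with the type-$\mathrm{(I)}$/type-$\mathrm{(III)}$ cases this proves the $R_\infty$ property.

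The main obstacle, and the only place requiring care, is the very last sign bookkeeping: I need to be certain that conjugation by the order-$4$ element $\alpha$ genuinely flips $\beta^2\mapsto\beta^{-2}$ (up to the lattice part), using $\alpha\beta^2\alpha^{-1}=\bfa^{\bfk}\beta^{-2}$, and that the possible extra factor $\alpha^{2v}$ appearing in $\varphi(\beta)$ does not spoil this — here one uses $\alpha^2$ commutes with everything in $\GammaA$ up to the $\bfa$-part and that $(\bfa^*\beta^{\pm1}\alpha^{2v})^2=\bfa^*\beta^{\pm2}$, so squaring kills the $\alpha^{2v}$. Once that computation is pinned down, the conclusion is immediate from Theorem~\ref{Pi_1-R} exactly as in the proofs for $\Pi_6(\bfk,\bfk')$ and $\Pi_7(\bfk)$.
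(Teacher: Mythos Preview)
Your proposal is correct and follows essentially the same route as the paper: restrict to the characteristic lattice $\GammaA=\langle a_1,a_2,\beta^2\rangle$ of index $8$, apply the averaging inequality of Theorem~\ref{HLP}, and then observe that conjugation by $\alpha$ (via $\alpha\beta^2\alpha^{-1}=\bfa^{\bfk}\beta^{-2}$) interchanges types $\mathrm{(I)}$ and $\mathrm{(II)}$ on $\GammaA$, so at least one summand is infinite by Theorem~\ref{Pi_1-R}. Note only a small sign slip in your displayed general formula---it should read $\tau_\alpha\varphi'(\beta^2)=\alpha(\bfa^*\beta^{\pm2})\alpha^{-1}=\bfa^*\beta^{\mp2}$---but your specific deduction for the type-$\mathrm{(II)}$ case is stated correctly and the argument goes through.
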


\begin{proof}
Let $\varphi$ be an automorphism on $\Pi_8(\bfk,\bfm)$ and let $\varphi'$ be the restriction of $\varphi$ on ${\Gamma_{\!A}}$. Since $[\Pi_8(\bfk,\bfm):\Gamma_{\!A}]=8$, by Theorem~\ref{HLP}, we have
$$
R(\varphi)\ge\frac{1}{8}\sum_{g\in\Pi_8(\bfk,\bfm)/\Gamma_{\!\!A}}R(\tau_{g}\varphi').
$$
Since
$$
\tau_\alpha\varphi(a_i)=\alpha(\bfa^{\bfn_i})\alpha^{-1}=\bfa^{M\bfn_i},\
\tau_\alpha\varphi(t^{\pm1})=\bfa^*t^{\mp1},
$$
it shows that $\varphi'$ is of type (II) if and only if $\tau_\alpha\varphi'$ is of type (I). Hence the theorem follows from Theorem~\ref{GA}.
\end{proof}

\smallskip

\noindent
\textbf{Acknowledgments.}
{The authors would like to thank the referee for making careful corrections to a few expressions.}


\begin{thebibliography}{20}
\bibitem{DP}
K. Dekimpe and P. Penninckx,
    The finiteness of the Reidemeister number of morphisms between almost-crystallographic groups,
    J. Fixed Point Theory Appl.,
    {\bf 9}  (2011),  257--283.

\bibitem{Fel00}
A. Fel'shtyn,
  Dynamical zeta functions, Nielsen theory and Reidemeister torsion,
  Mem. Amer. Math. Soc.,
  {\bf 699}, Amer. Math. Soc., Providence, R.I., 2000.

\bibitem{FH}
A. Fel'shtyn and R. Hill,
  The Reidemeister zeta function with applications to Nielsen theory and a connection with Reidemeister torsion,
  K-theory,
  {\bf 8} (1994), 367--393.

\bibitem{FL}
A. Fel'shtyn and J. B. Lee,
  The Nielsen and Reidemeister numbers of maps on infra-solvmanifolds of type $\R$,
  This Special Issue.

\bibitem{GW}
D. Gon\c{c}alves and P. Wong,
    Nielsen numbers of selfmaps of $\Sol$ $3$-manifolds,
    {Topology Appl.}, {\bf 159} (2012), 3729--3737.

\bibitem{HL2}
K. Y. Ha and J. B. Lee,
    Crystallographic groups of $\Sol$, Math. Nachr., {\bf 286} (2013), 1614--1667.

\bibitem{HL3}
K. Y. Ha and J. B. Lee,
    Averaging formulas of the Lefschetz, Nielsen and Reidemeister number of maps between infra-solvmanifolds of type~$\R$, Preprint.

\bibitem{HLP2}
K. Y. Ha, J. B. Lee and P. Penninckx,
    Formulas for the Reidemeister, Lefschetz and Nielsen coincidence number of maps between infra-nilmanifolds,
    {Fixed Point Theory Appl.}, {\bf 2012}:39 (2012), 1--23.

\bibitem{LL2}
J. B. Lee and K. B. Lee,
    {Averaging formula for Nielsen numbers of maps on infra-solvmanifolds of type $\R$},
    {Nagoya Math. J.}, {\bf 196} (2009), 117--134.

\bibitem{LZ}
{J. B. Lee and X. Zhao},
   {Nielsen type numbers and homotopy minimal periods for maps on $3$-solvmanifolds},
   {Alg. Geom. Topol.}, \textbf{8} (2008), 563--580.

\bibitem{Wong}
P. Wong,
    {Reidemeister number, Hirsch rank, coincidences on polycyclic groups and solvmanifolds},
    {J. Reine Angew. Math.}, {\bf 524} (2000), 185--204.
\end{thebibliography}
\end{document}